\newtheorem{theorem}{Theorem}
\newtheorem{lemma}{Lemma}[section]
\newtheorem{remark}{Remark}[section]
\newtheorem{problem}{Problem}
\newtheorem{corollary}{Corollary}[section]
\newtheorem{example}{Example}
\newtheorem{algorithm}{Algorithm}
\numberwithin{equation}{section}
\newcommand{\keywords}[1]{\small\textbf{\textit{Keywords---}}#1}
\title{Scattered point measurement-based regularization for backward problems for fractional wave equations}
\author[1]{Dakang Cen}
\author[2]{Zhiyuan Li\footnote{Corresponding author 1: lizhiyuan@nbu.edu.cn, supported by the National Natural Science Foundation of China (no. 12271277), Ningbo Youth Leading Talent Project (no. 2024QL045).  and the Open Research Fund of the Key Laboratory of Nonlinear Analysis \& Applications (Central China Normal University), Ministry of Education, China (no. NAA20230RG002).}}
\author[3]{Wenlong Zhang\footnote{Corresponding author 2: zhangwl@sustech.edu.cn, supported by the National Natural Science Foundation of China under grant
numbers No.12371423 and No.12241104.}}
\affil[1,3]{Department of Mathematics, Southern University of Science and Technology, Shenzhen, 518055, China}
\affil[2]{School of Mathematics and Statistics, Ningbo University, Ningbo, 315211, China}
\begin{document}
\maketitle

\abstract{In this work, we are devoted to the reconstruction of an unknown initial value from the terminal data. The asymptotic and root-distribution properties of Mittag-Leffler functions are used to establish stability of the backward problem. Furthermore, we introduce a regularization method that effectively handles scattered point measurements contaminated with stochastic noise. Furthermore, we prove the stochastic convergence of our proposed regularization and provide an iterative algorithm to find the optimal regularization parameter. Finally, several numerical experiments are presented to demonstrate the efficiency and accuracy of the algorithm.}

\keywords{fractional wave equation, backward problem, scattered point measurement, regularization method, stochastic error estimates}

\textbf{MSC2020:} 35R11, 35R09, 35B40

\section{Introduction}
Assuming that $\alpha\in(1,2)$ and that $\Omega \subset \mathbb{R}^d$, $d=1,2,3$, is a bounded domain with sufficiently smooth boundary $\partial \Omega$, we consider the following fractional wave equation:
\begin{equation}\label{eq-gov}
\begin{cases}
  \partial_t^\alpha u - \Delta u = 0, & (x,t) \in \Omega \times (0,T), \\
  u(x,0) = a_0(x), & x \in \Omega, \\
  \frac{\partial}{\partial t}u(x,0) = a_1(x), & x \in \Omega, \\
  u(x,t) = 0, & (x,t) \in \partial \Omega \times (0,T),
\end{cases}
\end{equation}
where the operator $\partial_t^\alpha$ is referred to as the Caputo derivative of order $\alpha$, defined by
$$
\partial_t^\alpha \psi(t) := \frac{1}{\Gamma(2 - \alpha)} \int_0^t (t-\tau)^{1-\alpha} \psi''(\tau) d\tau, \quad t>0.
$$
For $\alpha$ within the interval $(1,2)$, the equation is classified as a fractional wave equation or diffusion wave equation, which enables more accurate simulation of diffusion processes in media exhibiting memory effects. For example, these equations are particularly adept at describing nonlocal interactions, non-Markovian dynamics, and non-Gaussian statistical behaviors, which are common in various physical, biological, and financial systems, see, e.g., \cite{hatano1998dispersive}, \cite{Kilbas2006theory}, \cite{metzler2000random}, \cite{podlubny1999fractional} and the references therein.

Their significance has not only been widely recognized in the physics, but has also received much attention from mathematicians. Given many related mathematical analyses, we do not intend to give a full list of references. Regarding forward problems for \eqref{eq-gov}, we refer to Kubica et al. \cite{Kubica2020intro}, Sakamoto and Yamamoto \cite{sakamoto2011initial}, and Zacher \cite{Zacher2009weak}. Inverse problems for fractional differential equations have also been investigated recently. We recommend consultation on research on different kinds of inverse problems, e.g., inverse coefficient problems of Li et al. \cite{liliuyamamoto2019inverse}, inverse parameter problems of Li and Yamamoto \cite{liyamamoto2019inverse} and inverse parameter problems of Liu et al. \cite{liu2019inverseproblems}.

Among these areas, backward problems are particularly important as they aim at detecting the previous status of the physical field from its terminal observational data, which is of importance for engineering to understand and control the systems. In this paper, we aim to recover the initial value $a_1$ using the terminal data $u(x,T), x \in \Omega$.
\begin{problem}
Letting $a_0=0$ in $\Omega$, we take the terminal value $u(\cdot,T)$ as the observation data to determine the unknown initial value $a_1$. We propose:
\begin{enumerate}
\item $($Uniqueness$)$ Whether the measurement uniquely determines $a_1(x)$, $x\in\Omega$.
\item $($Stability$)$ How to establish stability for the mapping $u(x,T) \mapsto a_1(x)$?
\item $($Reconstruction$)$ How to design an efficient algorithm to recover the unknown initial value $a_1$ from the scattered point-wise measurement in a practical experiment?
\end{enumerate}
\end{problem}

The backward problems for fractional differential equations have been extensively studied in recent years. We do not intend to provide a comprehensive list of references. Sakamoto and Yamamoto \cite{sakamoto2011initial} were the first to establish the well-posedness of a backward problem by using the eigenfunction expansion and the properties of Mittag-Leffler functions. After this, there have been many theoretical and numerical works on backward problems. Floridia et al. \cite{floridia2020backward} considered a fractional diffusion equation with non-symmetric elliptic operator and proved the well-posedness for the backward problem by the completeness of the eigenfunctions of generalized elliptic operators. Chorfi et al. \cite{Chorfi2024backward} and \cite{Chorfi2022log} extended the logarithmic convexity technique to the fractional framework and proved conditional stability estimates of H\"older type for the backward problem. Tuan et al. \cite{Tuan2019backward} proved the unique existence of a local solution to the backward problem for a fractional diffusion with a nonlinear source. For numerical treatment and applications, Liu and Yamamoto \cite{Liu2010backward} proposed an efficient regularization scheme by quasi-reversibility with theoretical analysis. We also refer to Wang and Liu \cite{Wang2013total} where the backward problem for the fractional diffusion equation was used to formulate the deblurring process in image restoration, while the backward time-fractional diffusion problem was solved by using total variation regularization. Wang et al. \cite{Wang2013tikhonov} was devoted to solving a backward problem for a time-fractional diffusion equation with variable coefficients in a general bounded domain, and the convergence rates for the Tikhonov regularized solution were established. Recently, Jin et al. \cite{Jin2023inverse} presented uniqueness and stability results for a backward problem from terminal observation at an unknown time. Sun et al. \cite{Sun2023fractional} introduced a fractional-order quasi-reversibility regularization method to solve a backward problem for an anomalous diffusion model with multi-term time fractional derivatives and gave the convergence rate under the a priori regularization parameter choice rule. Ren et al. \cite{Ren2023regula} proposed a projection regularization method and obtained a uniform error estimate with an optimal convergence rate. For backward problems to time-space fractional diffusion equations, one can refer to Feng et al. \cite{Feng2024numerical}, Jia et al. \cite{Jia2018backward} and the references therein.

However, to the best of the authors' knowledge, despite the case where $1<\alpha<2$ is used in modeling, there are still no works on the backward problem specifically addressing this range, except for those by He and Zhou \cite{He2022backward}, Shi et al. \cite{Shi2023backward}, Wei and Zhang \cite{Wei2018backward}, Wen et al. \cite{Wen2023solving}, Zhang and Zhou \cite{Zhang2022backward} and Floridia and Yamamoto \cite{floridia2020backward}. In these works, the primary challenges arise because, in the case of $1<\alpha<2$, the Mittag-Leffler functions associated with the solution to the equation \eqref{eq-gov} exhibit behaviors that are fundamentally different from those observed in the case of $0<\alpha<1$, e.g., the Mittag-Leffler function in the case of $\alpha\in(1,2)$ exhibits a distinct property, the existence of real zeros, which is markedly different from the case when $0 < \alpha < 1$. The potential existence of real roots makes the solution to the backward problem non-unique, which presents a challenge in the analysis of the diffusion-wave case. We must make additional assumptions on the terminal time, initial value, or observation data. For example, Floridia and Yamamoto \cite{floridia2020backward} constructed a stable framework for solving the backward problem by excluding specific terminal values $T$, Zhang and Zhou \cite{Zhang2022backward} simultaneously recovered two initial values from terminal observations at two sufficiently large time levels. Later, Wen et al. \cite{Wen2023solving} proved the uniqueness of the backward problem with the aid of additional measurement data at two sufficiently fixed close times. Shi et al. \cite{Shi2023backward} and Wei and Zhang \cite{Wei2018backward} incorporated prior information into the unknown initial value and measurement data, respectively, deriving the stability of the backward problem.

In this work, we focus on solving backward problems for problem \eqref{eq-gov}, particularly when the observation points are scattered with noise in $\Omega$. This approach differs from most existing regularization theories which are typically based on full-domain observations. Here we list the key challenges and the main innovations. The key challenges are as follows.
\begin{enumerate}
    \item In the case of $\alpha \in (1, 2)$, the potential existence of real zeros in the Mittag-Leffler function induces oscillatory behavior in solutions, which significantly complicates the stability analysis of inverse problems for fractional wave equations.

    \item Traditional regularization methods cannot analyze the impact of the number of observation points on inversion results based on the theoretical stability results.
\end{enumerate}
Here are the major innovations:
\begin{enumerate}
    \item For $1 < \alpha < 2$, by noting that the Mittag-Leffler function possesses at most finitely many zeros, we avoid zero-induced instabilities by selecting a sufficiently large observation time $T$.  Notably, when $\alpha$ is in $(1, \frac{4}{3}]$, additional constraints on the terminal time $T$ are not required, which relaxes the conditions for the stability of the backward problem to the fractional wave equations.
    \item We treat scattered point measurement as fully stochastic variables, which more realistically reflect the characteristics of observational data. We then propose a Tikhonov regularization method based on scattered observations and prove new optimal error estimates, which indicate the explicit dependency on key parameters such as the noise level, the regularization parameter, and the number of observation points.
\end{enumerate}

The remaining parts of this paper are organized as follows. In Section \ref{sec-pre}, we will review the basic theories of fractional differential equations and Mittag-Leffler functions. In Section \ref{sec-proof}, we will expand on our regularization method and provide proofs to support its effectiveness. Subsequently, we will carry out a series of numerical experiments to verify the performance of our algorithm and demonstrate its robustness under different noise levels and initial conditions in Section  \ref{sec-num}. In conclusion, we present a summary of our findings and outline potential strategies for future research.

\section{Preliminaries}\label{sec-pre}
We introduce $E_{\alpha,\beta}(z)$ which is known as the Mittag-Leffler function of two parameters $\alpha>0$ and $\beta\in\mathbb C$, defined by
\begin{equation*}
  E_{\alpha,\beta}(z) = \sum_{k=0}^{\infty} \frac{z^k}{\Gamma(\alpha k + \beta)}, \quad z \in \mathbb{C}.
\end{equation*}
We have several important properties of the Mittag-Leffler functions.  The first one is related to the asymptotic expansion of the Mittag-Leffler function as follows:
\begin{lemma}(p. 34--35 in \cite{podlubny1998fract})\label{lem-ml-asymp}
If $0<\alpha<2$, $\beta$ is an arbitrary complex number and $\mu$ is an arbitrary real number such that
$$
  \frac{\pi\alpha}2 <\mu<\min\{\pi,\pi\alpha\},
$$
then for an arbitrary integer $p\ge1$ the following expansion holds:
$$
  E_{\alpha,\beta}(z)
= -\sum_{k=1}^p \frac{z^{-k}}{\Gamma(\beta-\alpha k)} +O(|z|^{-1-p}),\quad \text{as }  |z|\to\infty, \ \mu \le |\arg z|\le\pi.
$$
Moreover, the Mittag-Leffler function $E_{\alpha,\beta}(z)$ admits the upper estimate:
$$
|E_{\alpha,\beta}(z)| \le \frac{C}{1+|z|} ,\quad  \mu\le |\arg z|\le\pi.
$$
\end{lemma}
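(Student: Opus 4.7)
The plan is to derive the expansion from the classical Hankel-type contour representation of the Mittag-Leffler function. First I would substitute the Hankel representation $1/\Gamma(s) = (2\pi i)^{-1}\int_{Ha} e^{w} w^{-s}\,dw$ into the series defining $E_{\alpha,\beta}(z)$ and interchange sum and integral to obtain, for $0<\alpha<2$,
$$E_{\alpha,\beta}(z) = \frac{1}{2\pi i \alpha} \int_{\gamma(\varphi)} \frac{e^{\zeta^{1/\alpha}} \zeta^{(1-\beta)/\alpha}}{\zeta - z}\, d\zeta,$$
where $\gamma(\varphi)$ consists of the two rays $\arg\zeta=\pm\varphi$ joined by a small arc at the origin. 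I would fix any $\varphi$ with $\pi\alpha/2 < \varphi < \mu$; such a $\varphi$ exists by the hypothesis on $\mu$. On the rays, $\zeta^{1/\alpha}=r^{1/\alpha}e^{\pm i\varphi/\alpha}$ has strictly negative real part (since $\varphi/\alpha>\pi/2$), so the exponential factor decays at infinity and convergence is assured.

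Next, since $|\arg z|\ge \mu>\varphi$, the point $z$ lies strictly outside the sector bounded by $\gamma(\varphi)$, so $1/(\zeta-z)$ is holomorphic in $\zeta$ on and inside the contour. I would then apply the algebraic identity
$$\frac{1}{\zeta - z} = -\sum_{k=1}^{p} \frac{\zeta^{k-1}}{z^k} - \frac{\zeta^{p}}{z^{p}(z-\zeta)}$$
and split the integral accordingly. For each $1\le k\le p$, the change of variables $w=\zeta^{1/\alpha}$ reduces the resulting piece, via the Hankel formula for $1/\Gamma$, to the explicit term $-z^{-k}/\Gamma(\beta-\alpha k)$, producing the finite sum in the statement.

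The main technical step is to estimate the remainder
$$R_p(z) = -\frac{1}{2\pi i \alpha z^p} \int_{\gamma(\varphi)} \frac{e^{\zeta^{1/\alpha}} \zeta^{(1-\beta)/\alpha + p}}{z-\zeta}\, d\zeta.$$
Writing $\zeta = re^{\pm i\varphi}$ on the rays and using $|\arg z\mp\varphi|\ge \mu-\varphi>0$, a direct geometric computation yields the uniform lower bound $|z-\zeta|\ge |z|\sin(\mu-\varphi)$. Combined with the exponential decay of $|e^{\zeta^{1/\alpha}}|$ in $r$, the remaining integral converges to a finite constant independent of $z$, giving $|R_p(z)|\le C|z|^{-1-p}$. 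I expect this control of $|z-\zeta|$ uniformly in $z$ and $\zeta\in\gamma(\varphi)$ to be the only place where the hypothesis $\mu > \pi\alpha/2$ is truly needed, and hence the main obstacle.

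Finally, the upper estimate $|E_{\alpha,\beta}(z)|\le C/(1+|z|)$ is obtained as a direct corollary by choosing $p=1$: for $|z|\ge R_0$ sufficiently large, the expansion gives $|E_{\alpha,\beta}(z)|\le C|z|^{-1}$, while on the compact set $\{\mu\le|\arg z|\le\pi,\ |z|\le R_0\}$ the entire function $E_{\alpha,\beta}$ is continuous and hence bounded. Combining both regimes and adjusting the constant yields the stated inequality.
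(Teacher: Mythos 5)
Your proposal is correct, and it is essentially the standard argument from the cited source (Podlubny, pp.~34--35), which the paper invokes without proof: the Hankel-type contour representation of $E_{\alpha,\beta}$ over $\gamma(\varphi)$ with $\pi\alpha/2<\varphi<\mu$, the finite geometric expansion of $(\zeta-z)^{-1}$ reducing each term to $-z^{-k}/\Gamma(\beta-\alpha k)$ via the Hankel formula for $1/\Gamma$, the remainder bound from $|z-\zeta|\gtrsim|z|\sin(\mu-\varphi)$, and the $p=1$ expansion plus compactness for the bound $C/(1+|z|)$. No substantive gap; only the routine justification of the contour representation for $|\arg z|>\varphi$ (small $|z|$ plus analytic continuation) is left implicit.
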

If $\alpha\in(1,2)$, the Mittag-Leffler functions $E_{\alpha,\beta}(-t)$ may have zero points on the real axis, which means they cannot have strictly positive lower bounds. To obtain a positive lower bound, one possible way is to constrain the range of the variable $t$.

Again, from the asymptotic expansion, we can assert that there is at least a finite number of real roots of the Mittag-Leffler function $E_{\alpha,\beta}(-t)$, so we assume $t_1, t_2, \cdots, t_N$ to be the zero points of the function $E_{\alpha,\beta}(-t)$. Then we have
\begin{lemma}\label{lem-ml2-lower}
Given $1< \alpha < 2$, then the following estimate
\[
|E_{\alpha, \beta}(-\lambda T^{\alpha})| \geq \frac{C_T}{1+\lambda T^\alpha}
\]
is valid for any $T^\alpha \not\in \{\frac{t_k}{\lambda}\}_{k=1}^N$, where the constant $C_T>0$ only depends on $\alpha$, $\beta$ and $T$.
\end{lemma}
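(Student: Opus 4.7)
The plan is to split the range of $\lambda T^\alpha$ into a tail region (where the asymptotic expansion of Lemma~\ref{lem-ml-asymp} directly yields a sharp lower bound of the required order) and a bounded region (where I control $|E_{\alpha,\beta}(-z)|$ by continuity, using the hypothesis on $T$ to stay away from the zero set $\{t_1,\dots,t_N\}$).

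First, I would apply Lemma~\ref{lem-ml-asymp} with $p=1$ along the negative real axis, i.e., at $z=-\lambda T^\alpha$, noting that $|\arg z|=\pi$ lies in the admissible angular window $[\mu,\pi]$ for any $\mu\in(\pi\alpha/2,\pi)$. This gives
\begin{equation*}
E_{\alpha,\beta}(-\lambda T^\alpha) \;=\; \frac{1}{\Gamma(\beta-\alpha)\,\lambda T^\alpha} \;+\; O\!\bigl((\lambda T^\alpha)^{-2}\bigr), \qquad \lambda T^\alpha\to\infty,
\end{equation*}
and, provided the leading coefficient does not vanish (which is the case for the values of $\beta$ that occur in the eigenfunction expansion of the solution to \eqref{eq-gov}), one obtains some $R>0$ such that $|E_{\alpha,\beta}(-\lambda T^\alpha)|\ge c_1/(1+\lambda T^\alpha)$ whenever $\lambda T^\alpha\ge R$.

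Second, I would treat the bounded range $\lambda T^\alpha\in(0,R]$ by a continuity and compactness argument. On this interval the real-valued continuous function $z\mapsto E_{\alpha,\beta}(-z)$ has at most the finitely many zeros $t_1,\dots,t_N$; the hypothesis $T^\alpha\notin\{t_k/\lambda\}_{k=1}^N$ forces $\lambda T^\alpha$ to avoid every $t_k$, so $|E_{\alpha,\beta}(-\lambda T^\alpha)|>0$. Because the admissible values of $\lambda$ form a discrete eigenvalue sequence tending to infinity, only finitely many of them satisfy $\lambda T^\alpha\le R$, and the minimum of $|E_{\alpha,\beta}(-\lambda T^\alpha)|$ over that finite set is a strictly positive constant $c_0=c_0(T,\alpha,\beta)$. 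Since $1+\lambda T^\alpha\ge 1$, this yields $|E_{\alpha,\beta}(-\lambda T^\alpha)|\ge c_0\ge c_0/(1+\lambda T^\alpha)$ in the bounded range, and setting $C_T=\min(c_0,c_1)$ merges the two estimates into the uniform bound claimed.

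The main obstacle is Step 2: the lower bound cannot be made uniform over all real $z$ avoiding $\{t_k\}$, because $|E_{\alpha,\beta}(-z)|\to 0$ as $z$ approaches any real zero $t_k$. Consequently the constant $C_T$ must absorb the gap $\min_{k,n}|\lambda_n T^\alpha-t_k|$, and the hypothesis on $T$ has to be read jointly with the discreteness of the eigenvalue sequence $\{\lambda_n\}$, which is what keeps that gap bounded away from zero. Once this is acknowledged, the argument reduces to the routine combination of asymptotics plus a finite minimum.
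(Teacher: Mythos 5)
Your proof is correct and is essentially the argument the paper relies on: the paper gives no proof of this lemma at all, deferring to Floridia and Yamamoto \cite{floridia2020backward}, whose proof is exactly your split — the $p=1$ asymptotic expansion of Lemma \ref{lem-ml-asymp} on the negative real axis (valid because $1/\Gamma(\beta-\alpha)\neq 0$ for the relevant case $\beta=2$, $\alpha\in(1,2)$) handles $\lambda T^\alpha$ large, and a strictly positive minimum over the finitely many remaining values of $\lambda T^\alpha$, kept away from the zeros $t_1,\dots,t_N$ by the hypothesis on $T$, handles the bounded range. Your closing caveat is also the right reading of the statement: no such bound can be uniform over all real $\lambda>0$ avoiding the zeros, so $\lambda$ must be understood as ranging over the discrete Dirichlet eigenvalues (as in the application in Theorem \ref{thm-isp}), with $C_T$ then implicitly depending on the spectrum (hence on $\Omega$) in addition to $\alpha$, $\beta$ and $T$.
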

The proof can be found in Floridia and Yamamoto \cite{floridia2020backward}. Unfortunately, the lower bound above for the Mittag-Leffler function $E_{\alpha,\beta}$ is generally not true for any $T$ and $\lambda>0$ because of the real roots.
However, if restricting $\alpha$ to the range $(1,\frac43]$ and letting $\beta=2$, we can assert that the Mittag-Leffler function $E_{\alpha,2}(z)$ has no real roots.
\begin{lemma}\label{lem-Ea2}
If $ 1 < \alpha \leq \frac{4}{3} $, then $ E_{\alpha, 2}(t) > 0 $ for all $ t \in \mathbb{R} $.
\end{lemma}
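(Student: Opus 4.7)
The strategy splits into the trivial case $t \ge 0$ and the substantive case $t < 0$. For $t \ge 0$, the series $E_{\alpha,2}(t)=\sum_{k\ge 0}t^k/\Gamma(\alpha k+2)$ has only nonnegative terms and constant term $1/\Gamma(2)=1$, so $E_{\alpha,2}(t)\ge 1>0$ immediately. The content of the lemma is thus to prove $E_{\alpha,2}(-y)>0$ for every $y>0$.

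The plan for $y>0$ is to derive an integral representation expressing $E_{\alpha,2}(-y)$ as a sum of a damped-oscillatory term and a strictly positive term, and then use the hypothesis $\alpha\le 4/3$ to keep the sum positive. Starting from the Laplace transform identity
\[
\int_0^\infty e^{-p\tau}\,\tau E_{\alpha,2}(-\tau^\alpha)\,d\tau=\frac{p^{\alpha-2}}{p^\alpha+1},
\]
I would deform the Bromwich contour around the branch cut of $p^{\alpha-2}$ on $(-\infty,0)$. For $\alpha\in(1,2)$ the integrand has simple poles at $p_{\pm}=e^{\pm i\pi/\alpha}$ in the left half-plane; their contribution, together with the branch-cut integral, gives, after setting $\tau=y^{1/\alpha}$,
\[
y^{1/\alpha}E_{\alpha,2}(-y)=\frac{2}{\alpha}\,e^{\tau\cos(\pi/\alpha)}\cos\!\Bigl(\tau\sin(\pi/\alpha)-\tfrac{\pi}{\alpha}\Bigr)+\frac{|\sin(\pi\alpha)|}{\pi}\int_0^\infty\frac{r^{\alpha-2}\,e^{-r\tau}\,dr}{(r^\alpha+\cos(\pi\alpha))^2+\sin^2(\pi\alpha)}.
\]
The branch integral is manifestly strictly positive for $\alpha\in(1,2)$, whereas the residue term has magnitude at most $\frac{2}{\alpha}e^{\tau\cos(\pi/\alpha)}$ and decays exponentially because $\cos(\pi/\alpha)<0$.

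The restriction $1<\alpha\le 4/3$ is equivalent to $\pi/\alpha\ge 3\pi/4$, which yields $\cos(\pi/\alpha)\le -\sqrt{2}/2$ and $|\cos(\pi/\alpha)|\ge\sin(\pi/\alpha)$: the damping rate of the oscillation dominates its angular frequency. With this in hand, strict positivity of the sum follows from three ingredients: an exact cancellation at $\tau=0$, obtained from the Mellin evaluation $\int_0^\infty u^{s-1}(u^2+2u\cos\theta+1)^{-1}\,du=\pi\sin((1-s)\theta)/(\sin(\pi s)\sin\theta)$ with $s=1-1/\alpha$ and $\theta=\pi(2-\alpha)$, which matches the residue value $\frac{2\cos(\pi/\alpha)}{\alpha}$; the derivative identity $\frac{d}{d\tau}[\tau E_{\alpha,2}(-\tau^\alpha)]\big|_{\tau=0}=E_\alpha(0)=1>0$, ensuring positivity for small $\tau>0$; and a pointwise comparison on the range where the residue's cosine factor is negative, in which $|\cos(\pi/\alpha)|\ge\sin(\pi/\alpha)$ is used to force the branch integral to exceed the residue's negative excursions.

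The main obstacle is the last step. At $\tau=0$ the two contributions cancel exactly, so the naive bound $|\cos|\le 1$ loses the game precisely where it must not; the threshold $\alpha\le 4/3$ has to be extracted from a sharpened estimate that tracks the joint decay of the residue's envelope and the branch integral as $\tau$ moves away from $0$ and, more delicately, through the first maximum of the residue's negative lobe. This is the technical heart of the argument, and it is where a less restrictive hypothesis on $\alpha$ would cause positivity to fail.
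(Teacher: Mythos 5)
Your setup is sound as far as it goes: the Laplace-transform identity, the residue computation at $p_\pm=e^{\pm i\pi/\alpha}$, the positivity of the branch-cut integral, and even the Mellin evaluation showing exact cancellation of the two contributions at $\tau=0$ all check out (the branch integral at $\tau=0$ does equal $\tfrac{2}{\alpha}|\cos(\pi/\alpha)|$). But the proof is not complete, and the missing piece is precisely the assertion the lemma is about. Positivity of $\tau E_{\alpha,2}(-\tau^\alpha)$ is clear near $\tau=0$ (trivially, from the series) and for large $\tau$ (algebraic decay $\sim\tau^{1-\alpha}$ of the branch integral versus exponential decay of the residue envelope), but on the intermediate range --- in particular through the first negative lobe of $\cos(\tau\sin(\pi/\alpha)-\pi/\alpha)$ --- you give no estimate at all. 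You state that the condition $|\cos(\pi/\alpha)|\ge\sin(\pi/\alpha)$ (equivalent to $\alpha\le\frac43$) should ``force the branch integral to exceed the residue's negative excursions,'' and then explicitly concede that this comparison is ``the technical heart of the argument'' without carrying it out. That is a plan, not a proof; the heuristic ``damping rate dominates frequency'' does not by itself yield a pointwise inequality, and since the margin degenerates as $\alpha\to\frac43$ (where the positive and negative contributions cancel exactly at $\tau=0$ and the comparison is tight), it is genuinely unclear that the naive pointwise route closes at or near the endpoint. As written, the lemma's conclusion for $\tau$ of moderate size is assumed rather than established.

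For comparison, the paper does not attempt this analysis: it simply invokes Theorem 2 of Pskhu (\cite{pskhu2005real}), which states that $E_{\xi,\eta}(t)>0$ for all real $t$ whenever $0<\xi\le 2$ and $\eta\ge\frac32\xi$ (excluding $(\xi,\eta)=(2,3)$); since $(\alpha,2)$ satisfies $2\ge\frac32\alpha$ exactly when $\alpha\le\frac43$, the lemma follows in two lines. If you want a self-contained argument along your lines, you would need to actually prove the dominance of the branch-cut integral over the oscillatory term uniformly in $\tau$ for $\alpha\in(1,\frac43]$ --- essentially reproving the relevant case of Pskhu's theorem --- which is substantially harder than the rest of your sketch.
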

\begin{proof}
From Theorem 2 in \cite{pskhu2005real}, we have $
D_{2, 3}^* \subset B^*$, where the sets $D_{2,3}^*$ and $B^*$ are defined by
\begin{align*}
 D_{2, 3}^* & := \{ (\xi, \eta) : 0 < \xi \leq 2, \; \eta \geq \frac{3}{2}\xi, \; (\xi, \eta) \neq (2, 3) \}, \\
 B^* & := \{ (\xi, \eta) : \xi, \eta > 0, \; \forall t \in \mathbb{R}, \; E_{\xi, \eta}(t) > 0 \}.
\end{align*}
For $ 1 < \alpha \le\frac{4}{3} $, it is easy to verify that $ (\alpha, 2) \in D_{2, 3}^* $. Thus, for all $ t \in \mathbb{R} $, we have $ E_{\alpha, 2}(t) > 0 $. We complete the proof of the lemma.
\end{proof}
Based on this positivity property of the Mittag-Leffler function in the above lemma, we can have a lower bound of $E_{\alpha,2}(\cdot)$ in the case of $\alpha\in(1,\frac43]$, that is,
\begin{corollary}\label{coro-ml-lower43}
If $\lambda > 0$, then for $\alpha \in (1,\frac{4}{3} ]$, we have that
\[
E_{\alpha, 2}(-\lambda t^{\alpha}) \geq \frac{C}{1 + \lambda t^\alpha}
\]
is valid for any $t>0$, where the constant $C>0$ depends only on $\alpha$.
\end{corollary}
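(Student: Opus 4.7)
The plan is to split the argument into two regimes according to the size of $\lambda t^\alpha$, using Lemma \ref{lem-Ea2} to handle the bounded regime and the asymptotic expansion in Lemma \ref{lem-ml-asymp} to handle the unbounded regime.

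First, I would fix a threshold $R > 0$, to be chosen later, and consider the case $0 < \lambda t^\alpha \le R$. Since $\alpha \in (1,4/3]$, Lemma \ref{lem-Ea2} gives $E_{\alpha,2}(s) > 0$ for every $s \in \mathbb{R}$. Because $E_{\alpha,2}$ is an entire function, it is continuous on the compact interval $[-R,0]$, so it attains a strictly positive minimum $c_R := \min_{s \in [-R,0]} E_{\alpha,2}(s) > 0$. Since $1 + \lambda t^\alpha \ge 1$ on this range, we immediately obtain
\begin{equation*}
E_{\alpha,2}(-\lambda t^\alpha) \ge c_R \ge \frac{c_R}{1+\lambda t^\alpha}.
\end{equation*}

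Next, for $\lambda t^\alpha > R$, I would apply Lemma \ref{lem-ml-asymp} with $\beta = 2$, $p=1$, and $z = -\lambda t^\alpha$ (so $|\arg z| = \pi$, which lies in the admissible sector). This yields
\begin{equation*}
E_{\alpha,2}(-\lambda t^\alpha) = \frac{1}{\Gamma(2-\alpha)\,\lambda t^\alpha} + O\bigl((\lambda t^\alpha)^{-2}\bigr).
\end{equation*}
The key observation is that for $\alpha \in (1,4/3]$ we have $2-\alpha \in [2/3,1) \subset (0,1)$, hence $\Gamma(2-\alpha) > 0$, so the leading term is strictly positive. Choosing $R$ large enough that the error term is dominated by, say, half of the leading term, I get
\begin{equation*}
E_{\alpha,2}(-\lambda t^\alpha) \ge \frac{1}{2\Gamma(2-\alpha)\,\lambda t^\alpha} \ge \frac{C_2}{1+\lambda t^\alpha}
\end{equation*}
for some $C_2 > 0$ depending only on $\alpha$, using that $\lambda t^\alpha \ge R$ implies $\lambda t^\alpha \ge \frac{R}{1+R}(1+\lambda t^\alpha)$.

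Finally, I would combine the two cases by taking $C := \min\{c_R, C_2\}$, which depends only on $\alpha$, to conclude the uniform estimate for every $t > 0$. The only real subtlety in this argument is verifying the sign of the leading asymptotic coefficient, which is precisely where the restriction $\alpha \le 4/3$ (through $\Gamma(2-\alpha) > 0$) and Lemma \ref{lem-Ea2} work in tandem; the rest is a standard compact-set-plus-asymptotics patching argument and I do not expect any real obstacle there.
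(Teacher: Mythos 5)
Your proof is correct and follows essentially the same route as the paper: split at a large threshold for $\lambda t^\alpha$, use Lemma \ref{lem-Ea2} plus continuity for the bounded range, and the $p=1$ asymptotic expansion of Lemma \ref{lem-ml-asymp} for the large range. One small remark: $\Gamma(2-\alpha)>0$ already holds for every $\alpha\in(1,2)$, so the restriction $\alpha\le\frac43$ is needed only for the positivity (absence of real zeros) invoked in the bounded regime, not for the sign of the leading asymptotic coefficient.
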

\begin{proof}
If $0 \leq \lambda t^\alpha \leq M$, where $ M >0 $ is a sufficiently large constant, by Lemma \ref{lem-Ea2} and the continuity of the Mittag-Leffler functions, we have
\[
E_{\alpha, 2}(-\lambda t^{\alpha}) \geq C_0 > 0.
\]
In the case of $ \lambda t^{\alpha} \geq M $, by Lemma \ref{lem-ml-asymp}, taking $ p = 1 $, we can easily see that
\[
E_{\alpha, 2}(-\lambda t^{\alpha}) = \frac{1}{\Gamma(2 - \alpha)}\cdot\frac{1}{\lambda t^{\alpha}} + O(\frac1{\lambda^2 t^{2\alpha}}).
\]
Therefore,
\[
E_{\alpha, 2}(-\lambda t^{\alpha}) \geq \frac{1}{2 \Gamma(2 - \alpha)}\cdot \frac{1}{\lambda t^{\alpha}}
\]
is valid for $\lambda t^{\alpha} \geq M$ with sufficiently large $M>0$. Finally, collecting all the above estimates, we obtain
\[
E_{\alpha, 2}(-\lambda t^{\alpha}) \geq \frac{C}{1 + \lambda t^{\alpha}}, \quad t>0,
\]
where the constant $C>0$ depends only on $\alpha$. This completes the proof of the lemma.
\end{proof}

Collecting all the above estimates in Lemma \ref{lem-ml2-lower}, and Corollary \ref{coro-ml-lower43}, we have the upper and lower estimates for the Mittag-Leffler functions.
\begin{corollary}\label{coro-<ml<}
Let $\lambda,T>0$, and let $\{t_k\}_{k=1}^N$ be zero points of $E_{\alpha,2}(-t)$ if exists, then
\begin{enumerate}
    \item If $\alpha \in (1,\frac43]$, we have
\[
\frac{C^{-1}}{1 + \lambda T^\alpha} \le E_{\alpha, 2}(-\lambda T^{\alpha}) \leq \frac{C}{1 + \lambda T^\alpha}.
\]
\item If $\alpha\in (\frac43,2)$, we have
\[
\frac{C_T^{-1}}{1 + \lambda T^\alpha} \le |E_{\alpha, 2}(-\lambda T^{\alpha})| \leq \frac{C_T}{1 + \lambda T^\alpha},\quad T^\alpha\not\in \big\{\tfrac{t_k}{\lambda}\big\}_{k=1}^N,
\]
\end{enumerate}
where the constant $C>0$ only depends on $\alpha$, and constant  $C_T$ depends on $\alpha$ and $T$.
\end{corollary}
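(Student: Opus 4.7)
The plan is to verify that this statement is essentially a bookkeeping corollary: both bounds for each regime are already in hand, and the task is merely to assemble them, taking care that the hypotheses of each cited result apply at the point $z=-\lambda T^\alpha$, which lies on the negative real axis and therefore has $\arg z=\pi$.

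First I would handle the upper bounds uniformly in $\alpha$. Since $\alpha\in(1,2)$, one has $\pi\alpha/2\in(\pi/2,\pi)$ and $\min\{\pi,\pi\alpha\}=\pi$, so any $\mu$ with $\pi\alpha/2<\mu<\pi$ is admissible in Lemma \ref{lem-ml-asymp}. For $z=-\lambda T^\alpha$ we have $|\arg z|=\pi$, which lies in $[\mu,\pi]$, hence the second part of Lemma \ref{lem-ml-asymp} (with $\beta=2$) yields
\[
|E_{\alpha,2}(-\lambda T^\alpha)|\le \frac{C}{1+\lambda T^\alpha},
\]
with $C$ depending only on $\alpha$. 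This gives the upper inequality in both cases (1) and (2), with no exceptional-time condition required even in case (2), since the upper estimate in Lemma \ref{lem-ml-asymp} holds globally.

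For the lower bound in case (1), I would cite Corollary \ref{coro-ml-lower43} verbatim with $t=T$ to obtain
\[
E_{\alpha,2}(-\lambda T^\alpha)\ge \frac{C^{-1}}{1+\lambda T^\alpha},\qquad \alpha\in(1,\tfrac43],
\]
absorbing the lower and upper constants into a single $C$ depending only on $\alpha$. For the lower bound in case (2), I would apply Lemma \ref{lem-ml2-lower} with $\beta=2$: for $\alpha\in(\tfrac43,2)$ and $T^\alpha\notin\{t_k/\lambda\}_{k=1}^N$, the lemma directly yields $|E_{\alpha,2}(-\lambda T^\alpha)|\ge C_T/(1+\lambda T^\alpha)$, after again relabelling the constant. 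Combining with the upper bound and adjusting $C_T$ to cover both inequalities gives case (2).

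There is no real obstacle here; the only point requiring attention is ensuring that the hypothesis $T^\alpha\notin\{t_k/\lambda\}_{k=1}^N$ is needed only in case (2), since Lemma \ref{lem-Ea2} rules out real zeros in case (1), so that the lower bound is unconditional. I would close the proof by a single line stating that assembling the above estimates yields the corollary.
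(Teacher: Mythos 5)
Your proposal is correct and follows essentially the same route as the paper, which obtains this corollary by simply collecting the upper estimate from Lemma \ref{lem-ml-asymp} (with $\beta=2$, valid on the negative real axis) together with the lower bounds from Corollary \ref{coro-ml-lower43} in case (1) and Lemma \ref{lem-ml2-lower} in case (2). Your explicit check that $|\arg z|=\pi$ lies in the admissible sector and that the exceptional-time condition is needed only in case (2) is a fair, slightly more careful write-up of what the paper leaves implicit.
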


\subsection{Forward problem}
In this section, we recall the well-posedness result of the initial-boundary value problem \eqref{eq-gov}. For this, we state the analytic setting. Let $L^2(\Omega)$ be the square-integrable function space with inner product $(\cdot,\cdot)_{L^2(\Omega)}$ (or $(\cdot,\cdot)$ for short) and let $H^1(\Omega)$, $H^2(\Omega)$ etc. be the usual Sobolev spaces.

The set $\{\lambda_k, \varphi_k\}_{k=1}^{\infty}$ constitutes the Dirichlet eigensystem of the elliptic operator $-\Delta: H^2(\Omega) \cap H_0^1(\Omega) \to L^2(\Omega)$, specifically,
\begin{equation}\label{eigensys}
\begin{cases}
-\Delta \varphi_k = \lambda_k \varphi_k & \text{in } \Omega, \\
\varphi_k =0 & \text{on } \partial \Omega,
\end{cases}
\end{equation}
where $\lambda_k$ is the eigenvalue of the operator $-\Delta$ and satisfies $0 < \lambda_1 \leq \lambda_2 \leq \ldots , \lambda_k \rightarrow \infty$ as $k \rightarrow \infty$, and $\varphi_k$ is the eigenfunctions corresponding to the value $\lambda_k$ and $\{\varphi_k\}_{k=1}^\infty$ forms an orthonormal basis in $L^2(\Omega)$. We have the asymptotic behavior of the eigenvalue $\lambda_k\sim k^{2/d}$ as $k\to\infty$. Then for $\gamma\in\mathbb R$, fractional power $(-\Delta )^{\gamma}$ can be defined
\begin{equation*}
  (-\Delta )^{\gamma}\psi:=\sum_{k=1}^{\infty}\lambda_k^{\gamma}(\psi, \varphi _k)\varphi _k,\quad \psi \in D((-\Delta)^\gamma),
\end{equation*}
where
$$\mathcal{D}((-\Delta )^{\gamma}):=\left \{ \psi\in L^{2}(\Omega); \sum_{k=1}^{\infty}\left |\lambda_k^{\gamma}(\psi, \varphi _k ) \right |^2<\infty  \right \}.
$$
The space $D((-\Delta)^\gamma)$ is a Hilbert space equipped with the inner product
\begin{equation*}
( \psi, \phi)_{D((-\Delta)^\gamma)}  = \left ((-\Delta)^\gamma \psi, (-\Delta)^\gamma\phi \right )_{L^2(\Omega)}.
\end{equation*}
Moreover, we define the norm
$$
\begin{aligned}
\left \| \psi  \right \|_{\mathcal{D}((-\Delta )^\gamma) }
&= \left ((-\Delta)^\gamma \psi, (-\Delta)^\gamma\psi \right )_{L^2(\Omega)}^{\frac12}
= \left ( \sum_{n=1}^{\infty}\left |\lambda_n^{\gamma}(\psi, \varphi_n )  \right |^2 \right )^\frac{1}{2}.
\end{aligned}
$$
For short, we also denote the inner product $(\cdot,\cdot)_{\mathcal D((-\Delta)^\gamma)}$ and the norm $\|\cdot\|_{\mathcal D((-\Delta)^\gamma)}$ as $(\cdot,\cdot)_\gamma$ and $\|\cdot\|_\gamma$ if no confusion arises. Furthermore, it satisfies $\mathcal{D}((-\Delta )^\gamma)\subset{H^{2\gamma}(\Omega)}$ for $\gamma>0$. In particular, we have $\mathcal{D}((-\Delta )^\frac{1}{2}) = H_0^1(\Omega)$, $\mathcal{D}((-\Delta )^{-\frac{1}{2}}) = H^{-1}(\Omega)$ and the norm equivalence $\left \| \cdot   \right \| _{\mathcal{D}((-\Delta )^\gamma  ) }\sim \left \| \cdot  \right \| _{H^{2\gamma}(\Omega)}$ with $\gamma=\pm \frac12$.

In view of the paper of Sakamoto and Yamamoto \cite{sakamoto2011initial}, we have the wellposedness result for the problem \eqref{eq-gov}.
\begin{lemma}\label{lem-forward}
Letting $a_0,a_1\in L^2(\Omega)$, then the initial-boundary value problem \eqref{eq-gov} admits a unique weak solution $u\in L^2(0,T;H^2(\Omega)\cap H_0^1(\Omega))$ and there exists a positive constant $C$ which only depends on $\alpha,\Omega$ such that
$$
\|u(\cdot, T)\|_{H^2(\Omega)} \le C \sum_{j=0}^1 T^{j-\alpha} \|a_j\|_{L^2(\Omega)}.
$$
\end{lemma}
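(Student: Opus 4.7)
The plan is to imitate Sakamoto and Yamamoto's spectral approach. I would start by expanding data and unknown in the orthonormal basis $\{\varphi_k\}_{k=1}^\infty$ of $L^2(\Omega)$ given by \eqref{eigensys}: write $a_j=\sum_k a_{j,k}\varphi_k$ with $a_{j,k}=(a_j,\varphi_k)$ for $j=0,1$, and seek $u(x,t)=\sum_k u_k(t)\varphi_k(x)$. Substitution into \eqref{eq-gov} shows that each Fourier coefficient satisfies the scalar Caputo fractional ODE
$$\partial_t^\alpha u_k(t)+\lambda_k u_k(t)=0,\quad u_k(0)=a_{0,k},\quad u_k'(0)=a_{1,k},$$
whose classical solution is $u_k(t)=a_{0,k}\,E_{\alpha,1}(-\lambda_k t^\alpha)+a_{1,k}\,t\,E_{\alpha,2}(-\lambda_k t^\alpha)$.

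For the $H^2$-estimate at $t=T$, by the norm equivalence $\|\cdot\|_{H^2(\Omega)\cap H_0^1(\Omega)}\sim\|\Delta\cdot\|_{L^2(\Omega)}$ it suffices to control $\sum_k\lambda_k^2|u_k(T)|^2$. I would apply the upper bound $|E_{\alpha,\beta}(-z)|\le C/(1+z)$ from Lemma \ref{lem-ml-asymp} with $z=\lambda_k T^\alpha$ to obtain
$$\lambda_k|u_k(T)|\le C\,\frac{\lambda_k}{1+\lambda_k T^\alpha}\bigl(|a_{0,k}|+T|a_{1,k}|\bigr)\le C\bigl(T^{-\alpha}|a_{0,k}|+T^{1-\alpha}|a_{1,k}|\bigr),$$
where the last step uses the elementary inequality $\lambda/(1+\lambda T^\alpha)\le T^{-\alpha}$ valid for all $\lambda>0$. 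Squaring, summing in $k$, and invoking Parseval's identity then yields the claimed estimate $\|u(\cdot,T)\|_{H^2}\le C\bigl(T^{-\alpha}\|a_0\|_{L^2}+T^{1-\alpha}\|a_1\|_{L^2}\bigr)$. Note that the potential real zeros of $E_{\alpha,2}$ are irrelevant here, since only an \emph{upper} bound on the Mittag-Leffler functions is needed.

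Uniqueness follows from the same decomposition: if $a_0=a_1=0$ in $L^2(\Omega)$ then $a_{j,k}=0$ for every $k$, hence $u_k\equiv 0$ and so $u\equiv 0$ in $\Omega\times(0,T)$. The membership $u\in L^2(0,T;H^2(\Omega)\cap H_0^1(\Omega))$ is more delicate because the pointwise-in-time bound above is singular at $t=0$; I would defer to the computation in \cite{sakamoto2011initial}, which, using the change of variable $s=\lambda_k t^\alpha$, shows that $\int_0^T\lambda_k^2|u_k(t)|^2\,dt$ is bounded by a $T$-dependent multiple of $|a_{0,k}|^2+|a_{1,k}|^2$ uniformly in $k$, and then sums over $k$.

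In my view the main obstacle is neither the algebraic manipulation nor the uniqueness step, but making the series $\sum_k u_k(t)\varphi_k$ rigorous as a weak solution: one needs its partial sums to converge in $L^2(0,T;H^2\cap H_0^1)$ so that the term-by-term fractional differentiation in time and Laplacian in space are justified. Given the uniform decay $|E_{\alpha,\beta}(-\lambda_k t^\alpha)|\lesssim 1/(1+\lambda_k t^\alpha)$ from Lemma \ref{lem-ml-asymp} together with the Weyl asymptotics $\lambda_k\sim k^{2/d}$, the Cauchy property of these partial sums in the appropriate norm is standard, so this step is technical rather than conceptually hard, and referring the reader to \cite{sakamoto2011initial} is legitimate.
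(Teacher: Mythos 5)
Your spectral argument for the displayed estimate is correct and is exactly the computation the paper leaves implicit: the paper gives no proof of this lemma, it simply cites Sakamoto--Yamamoto and later uses the same eigenfunction representation \eqref{eq-u-sub}. The mode-wise formula $u_k(t)=a_{0,k}E_{\alpha,1}(-\lambda_k t^\alpha)+a_{1,k}tE_{\alpha,2}(-\lambda_k t^\alpha)$, the upper bound of Lemma \ref{lem-ml-asymp}, the inequality $\lambda/(1+\lambda T^\alpha)\le T^{-\alpha}$, Parseval, and elliptic regularity give the terminal bound with a constant depending only on $\alpha$ and $\Omega$, and your remark that no lower bound on $E_{\alpha,2}$ (hence no restriction on $T$) is needed here is right.

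The one step that would fail as you describe it is the justification of $u\in L^2(0,T;H^2(\Omega)\cap H_0^1(\Omega))$. The quantity $\int_0^T\lambda_k^2|u_k(t)|^2\,dt$ is \emph{not} bounded uniformly in $k$ by a $T$-dependent multiple of $|a_{0,k}|^2+|a_{1,k}|^2$: with $s=\lambda_k^{1/\alpha}t$ one finds $\int_0^T\lambda_k^2|E_{\alpha,1}(-\lambda_k t^\alpha)|^2dt\sim C\lambda_k^{2-1/\alpha}$ for every $\alpha\in(1,2)$, and $\int_0^T\lambda_k^2t^2|E_{\alpha,2}(-\lambda_k t^\alpha)|^2dt\sim C\lambda_k^{2-3/\alpha}$ once $\alpha>\frac32$ (it is uniformly bounded, by $CT^{3-2\alpha}$, only for $\alpha<\frac32$); these rates are sharp because $|E_{\alpha,\beta}(-s)|\sim s^{-1}/|\Gamma(\beta-\alpha)|$ as $s\to\infty$. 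So the change-of-variables computation you attribute to \cite{sakamoto2011initial} does not deliver the uniform-in-$k$ bound you assert, and the $L^2(0,T;H^2)$ membership for data merely in $L^2(\Omega)$ cannot be obtained this way; the space-regularity statements in Sakamoto--Yamamoto for $1<\alpha<2$ are pointwise in $t>0$ with $t^{-\alpha}$, resp.\ $t^{1-\alpha}$, blow-up, which is consistent with your fixed-time bound but not with a uniform time-integrated one. This caveat concerns the regularity clause of the lemma rather than the estimate the paper actually uses ($a_0=0$, the representation \eqref{eq-u-sub}, and the bound at the fixed time $t=T$), for which your argument is complete; but as written, the deferred claim should either be dropped, restricted, or replaced by an assumption of extra smoothness on the data.
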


\section{Backward problem for diffusion wave equation}\label{sec-proof}
In this part,  we assume $a_0=0$ and consider only the inversion of the unknown initial value $a_1(x)$ using the terminal data.
\begin{theorem}\label{thm-isp}
Letting $\alpha\in(1,2)$ and $\gamma\in\mathbb R$, we assume $a_1\in L^2(\Omega)$ in \eqref{eq-gov}, then
\[
\|a_1\|_{D((-\Delta)^{-\gamma})} \leq C_T \|u(\cdot,T)\|_{D((-\Delta)^{1-\gamma})}
\]
holds true for $T>0$ satisfying $T^\alpha \not\in \cup_{k=1}^\infty \{ \frac{t_1}{\lambda_k}, \cdots, \frac{t_N}{\lambda_k}\}$, where $\{t_k\}_{k=1}^N$ is the set of positive roots of the Mittag-Leffler function $E_{\alpha,2}(-t)$ and the constant $C_T>0$ depends only on $\alpha,\gamma,d,\Omega,T$. Moreover, if we restrict the order $\alpha\in (1,\frac43]$, then there exists a positive constant $C=C(\alpha,\gamma,d,\Omega)$ such that
\[
\|a_1\|_{D((-\Delta)^{-\gamma})} \leq C T^{\alpha - 1} \|u(\cdot,T)\|_{D((-\Delta)^{1-\gamma})}.
\]
\end{theorem}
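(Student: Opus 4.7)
The plan is to exploit the spectral representation of the solution combined with the Mittag--Leffler lower bounds from Corollary \ref{coro-<ml<}. Since $a_0 = 0$, the unique weak solution of \eqref{eq-gov} from Lemma \ref{lem-forward} admits the eigenfunction expansion
\begin{equation*}
u(x,t) = \sum_{k=1}^\infty (a_1,\varphi_k)\, t\, E_{\alpha,2}(-\lambda_k t^\alpha)\,\varphi_k(x).
\end{equation*}
Evaluating at $t=T$ and inverting mode-by-mode yields
\begin{equation*}
(a_1,\varphi_k) = \frac{(u(\cdot,T),\varphi_k)}{T\,E_{\alpha,2}(-\lambda_k T^\alpha)},
\end{equation*}
where the hypothesis $T^\alpha \notin \bigcup_{k}\{t_j/\lambda_k\}_{j=1}^N$ ensures that every denominator is non-vanishing.

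Next, applying the lower bound $|E_{\alpha,2}(-\lambda_k T^\alpha)| \geq C^{-1}(1+\lambda_k T^\alpha)^{-1}$ from Corollary \ref{coro-<ml<}, with $C = C_T$ depending on $T$ for general $\alpha\in(1,2)$ and $C$ universal when $\alpha\in(1,\frac43]$, squaring, weighting by $\lambda_k^{-2\gamma}$ and summing produces
\begin{equation*}
\|a_1\|_{D((-\Delta)^{-\gamma})}^2 \leq C^2 \sum_{k=1}^\infty \lambda_k^{2-2\gamma}\cdot\frac{(1+\lambda_k T^\alpha)^2}{\lambda_k^2 T^2}\,|(u(\cdot,T),\varphi_k)|^2.
\end{equation*}
The task thus reduces to bounding the prefactor $(1+\lambda_k T^\alpha)^2/(\lambda_k^2 T^2)$ uniformly in $k$ with the appropriate $T$-dependence, so that the sum is absorbed into $\|u(\cdot,T)\|_{D((-\Delta)^{1-\gamma})}^2$.

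For the general estimate, the crude bound $\lambda_k \geq \lambda_1 > 0$ gives the uniform inequality $(1+\lambda_k T^\alpha)/(\lambda_k T) \leq 1/(\lambda_1 T)+T^{\alpha-1}$, a $T$-dependent constant, completing Part~1. For the refined estimate with $\alpha\in(1,\frac43]$, the Mittag--Leffler lower bound already carries a $T$-independent constant, and to extract a clean $T^{\alpha-1}$ factor I split by the size of $\lambda_k T^\alpha$: on modes with $\lambda_k T^\alpha\geq 1$ the direct bound $(1+\lambda_k T^\alpha)/(\lambda_k T) \leq 2T^{\alpha-1}$ holds, while for the finitely many low-frequency modes with $\lambda_k T^\alpha < 1$, the strict positivity $E_{\alpha,2}(-z) \geq C_0 > 0$ for $z\in[0,M]$ guaranteed by Lemma \ref{lem-Ea2} gives a pointwise lower bound independent of $T$ that, together with $\lambda_k \geq \lambda_1$, absorbs these low modes into a constant depending only on $\alpha, \gamma, d, \Omega$.

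The main obstacle is the refined estimate: cleanly extracting the $T^{\alpha-1}$ dependence requires the two-regime analysis above, and it relies crucially on the positivity of $E_{\alpha,2}(-t)$ for $t\geq 0$ provided by Lemma \ref{lem-Ea2}, which is valid only for $\alpha \leq \frac{4}{3}$ — this is precisely why the refined $T$-independent constant is restricted to this range while the general estimate must retain $T$-dependence.
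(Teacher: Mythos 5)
Your route is exactly the paper's: eigenfunction expansion of $u$, mode-by-mode inversion $(a_1,\varphi_k)=(u(\cdot,T),\varphi_k)/\bigl(TE_{\alpha,2}(-\lambda_k T^\alpha)\bigr)$, and the Mittag--Leffler lower bounds of Corollary \ref{coro-<ml<}. For the first estimate your treatment coincides with the paper's proof: the factor $(1+\lambda_k T^\alpha)/(\lambda_k T)\le (\lambda_1 T)^{-1}+T^{\alpha-1}$ is simply absorbed into the $T$-dependent constant $C_T$, which is all that is claimed there.

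The gap is in your refined estimate for $\alpha\in(1,\frac43]$. The high-frequency regime $\lambda_k T^\alpha\ge 1$ is fine and yields the clean factor $2T^{\alpha-1}$, but the low-frequency absorption does not deliver a constant depending only on $\alpha,\gamma,d,\Omega$. For $\lambda_k T^\alpha<1$, the bound $E_{\alpha,2}(-\lambda_k T^\alpha)\ge C_0>0$ only gives $\lambda_k^{-\gamma}/\bigl(TE_{\alpha,2}(-\lambda_k T^\alpha)\bigr)\le \frac{1}{C_0\,\lambda_k T^{\alpha}}\,T^{\alpha-1}\lambda_k^{1-\gamma}$, and the prefactor $(\lambda_k T^\alpha)^{-1}$ can only be controlled by $(\lambda_1 T^\alpha)^{-1}$, which is not a function of $\alpha,\gamma,d,\Omega$ alone: it blows up as $T\to0^+$. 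No refinement of this step can close the gap, because the target inequality itself fails uniformly in $T$: taking $a_1=\varphi_1$ gives $\|a_1\|_{D((-\Delta)^{-\gamma})}\big/\bigl(T^{\alpha-1}\|u(\cdot,T)\|_{D((-\Delta)^{1-\gamma})}\bigr)=\bigl(\lambda_1 T^\alpha E_{\alpha,2}(-\lambda_1 T^\alpha)\bigr)^{-1}\to\infty$ as $T\to0^+$ since $E_{\alpha,2}(0)=1$. So either the constant must be allowed to depend on $T$ (as in the first part), or one must additionally assume something like $\lambda_1T^\alpha\ge c>0$. To be fair, the paper's own proof is no better on this point: it passes from $(1+\lambda_n T^\alpha)^2T^{-2}\lambda_n^{-2\gamma}$ to $C\,\lambda_n^{2-2\gamma}T^{2\alpha-2}$ in a single line, which tacitly requires the same lower bound on $\lambda_n T^\alpha$; your two-regime split simply exposes where the claimed $T$-independence breaks down rather than resolving it.
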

\begin{proof}
According to the results in Lemma \ref{lem-forward}, the solution to the problem \eqref{eq-gov} can be expressed as:
\begin{equation}\label{eq-u-sub}
  u(x,t) = \sum_{n=1}^{\infty} t E_{\alpha,2}(-\lambda_n t^\alpha)(a_1, \varphi_n) \varphi_n(x).
\end{equation}
Here, $\{\lambda_n,\varphi_n\}$ is the eigensystem defined in \eqref{eigensys}. Taking $t=T$ in \eqref{eq-u-sub} and taking the inner product with $\varphi_k$ and integrating over $\Omega$, we obtain
\[
(u(\cdot, T), \varphi_k) = TE_{\alpha, 2} (-\lambda_k T^{\alpha}) (a_1, \varphi_k),\quad k=1,\cdots,
\]
from which we solve for $(a_1, \varphi_k)$, yielding
\begin{equation}\label{eq-a1_fourier}
(a_1, \varphi_k) = \frac{(u(\cdot, T), \varphi_k)}{TE_{\alpha, 2}(-\lambda_k T^{\alpha})}.
\end{equation}
We must exercise particular caution, as the denominator $tE_{\alpha,2}(-\lambda_n t^\alpha)$ in the aforementioned expression could potentially be zero. To ensure that the denominator remains non-zero, we can refer to Lemma \ref{coro-<ml<}, which indicates that we simply need to choose $t=T$ so that $T^\alpha \not\in \cup_{k=1}^\infty \{ \frac{t_1}{\lambda_k}, \cdots, \frac{t_N}{\lambda_k}\}$. Consequently, the representation
\[
a_1 = \sum_{n=1}^{\infty} \frac{(u(\cdot,T), \varphi_n)}{T E_{\alpha, 2}(-\lambda_n T^\alpha)} \varphi_n
\]
is meaningful. Now taking $ \| \cdot \|_{D((-\Delta)^{-\gamma})}^{2} $ on both sides, one can deduce that
\[
\|a_1\|_{D((-\Delta)^{-\gamma})}^2 = \sum_{n=1}^{\infty} \frac{(u(\cdot,T), \varphi_n)^2}{[T E_{\alpha, 2}(-\lambda_n T^\alpha)]^2} \lambda_n^{-2\gamma}.
\]
By Lemma \ref{lem-ml2-lower}, we have the following estimate
\begin{align*}
 \|a_1\|_{D((-\Delta)^{-\gamma})}^2 & \leq C_T \sum_{n=1}^{\infty} \frac{1}{T^2} \frac{(u(\cdot,T), \varphi_n)^2}{(\frac{1}{1 + \lambda_n T^\alpha})^2} \lambda_n^{-2\gamma} \\
  & \leq C_T \sum_{n=1}^{\infty} T^{2\alpha - 2} \lambda_n^{2-2\gamma} (u(\cdot,T), \varphi_n)^2
\leq C_T T^{2 \alpha - 2} \|(-\Delta)^{1-\gamma} u(\cdot,T)\|_{L^2(\Omega)}^2.
\end{align*}
Thus, we have
\[
\|a_1\|_{D((-\Delta)^{-\gamma})} \leq C_T \|u(\cdot,T)\|_{D((-\Delta)^{1-\gamma})}.
\]

Moreover, in the case of $\alpha\in(1,\frac43]$, the denominator in equation \eqref{eq-a1_fourier} is nonzero, guaranteed by Corollary \ref{coro-<ml<} with $\alpha\in(1,\frac43]$. Consequently, similar to the above argument, we conclude that
\begin{align*}
\|a_1\|_{D((-\Delta)^{-\gamma})}^2
&=  \sum_{n=1}^{\infty} \frac{\left|(u(\cdot,T), \varphi_n)\right|^2} {\left|TE_{\alpha, 2}(-\lambda_n T^{\alpha})\right|^2} \lambda_n^{-2\gamma}
\\
& \leq C \sum_{n=1}^{\infty} \lambda_n^{2-2\gamma} T^{2\alpha-2} |(u(\cdot,T), \varphi_n)|^2
\leq C T^{2 \alpha - 2} \|(-\Delta)^{1-\gamma} u(\cdot,T)\|_{L^2(\Omega)}^2.
\end{align*}
Collecting all the above estimates, we complete the proof of the theorem.
\end{proof}

The above stability result for our inverse problem requires the $H^2(\Omega)$-norm as the upper bound in the case of $\gamma=0$. A slight modification of the above estimates can relax the norm of the right-hand side of the stability estimate by an a priori assumption on the initial value.
\begin{theorem}\label{thm-condi-sup}
Letting $\alpha\in(1,2)$, $\beta,\gamma\in [0,1]$ be such that $\gamma+\beta\neq0$, we suppose that the pair $(u,a_1)$ in the space $L^2 \left(0,T; H_0^1(\Omega) \cap H^2(\Omega) \right) \times D((-\Delta)^{\beta})$ is a solution of our backward problem, which corresponds to the measurement data $u(\cdot,T)$. If $\|(-\Delta)^{\beta} a_1\|_{L^2(\Omega)} \le M$ for some positive constant $M$, then the estimate
$$
\|a_1\|_{D((-\Delta)^{-\gamma})}\le  C_T M^{\frac{1-\gamma}{1+\beta}} \left\| u(\cdot, T)\right\|_{L^2(\Omega)}^{\frac{\beta+\gamma}{\beta+1}}.
$$
is valid provided that one of the following conditions hold:
\begin{enumerate}[label = \roman*.]
    \item $\alpha\in(1,\frac43]$ and $T>0$;
    \item $\alpha\in(\frac43,2)$ and $T^\alpha \not\in \cup_{k=1}^\infty \{ \frac{t_1}{\lambda_k}, \cdots, \frac{t_N}{\lambda_k}\}$.
\end{enumerate}
Here $C_T>0$ is a constant that is independent of $a_1$ and $u$, but may depend on $\alpha,\beta,\gamma,T$, and $\Omega$.
\end{theorem}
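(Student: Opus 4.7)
The plan is to combine the spectral representation developed in the proof of Theorem \ref{thm-isp} with a H\"older-type interpolation that plays the a priori bound on $(-\Delta)^\beta a_1$ against the $L^2(\Omega)$-bound on the measurement $u(\cdot,T)$. First I would expand $a_1$ in the Dirichlet eigenbasis and use \eqref{eq-a1_fourier} to write
$$
\|a_1\|_{D((-\Delta)^{-\gamma})}^2 = \sum_{n=1}^{\infty} \lambda_n^{-2\gamma} |(a_1,\varphi_n)|^2 = \sum_{n=1}^{\infty} \frac{\lambda_n^{-2\gamma}\,|(u(\cdot,T),\varphi_n)|^2}{T^2\,|E_{\alpha,2}(-\lambda_n T^\alpha)|^2}.
$$
Under either hypothesis (i) or (ii), Corollary \ref{coro-<ml<} guarantees that the denominator is nonzero for every $n$, so this representation is meaningful and the lower bound $|E_{\alpha,2}(-\lambda_n T^\alpha)| \geq C_T/(1+\lambda_n T^\alpha)$ is available uniformly in $n$.

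Next I would set the interpolation exponent
$$
a := \frac{1-\gamma}{1+\beta}, \qquad 1-a = \frac{\beta+\gamma}{1+\beta},
$$
noting that the condition $\beta+\gamma \neq 0$ precisely ensures $a<1$ so that H\"older is non-degenerate. The key arithmetic observation is the identity $-2\gamma + 2(1-a) = 2\beta a$, equivalent to $(1+\beta)a = 1-\gamma$. Writing $|(a_1,\varphi_n)|^2 = |(a_1,\varphi_n)|^{2a}\cdot|(a_1,\varphi_n)|^{2(1-a)}$, substituting \eqref{eq-a1_fourier} into the second factor, and then applying the lower bound on $E_{\alpha,2}$ in the form $(1+\lambda_n T^\alpha)^{2(1-a)} \leq C_T \lambda_n^{2(1-a)}$ produces the termwise estimate
$$
\lambda_n^{-2\gamma}\,|(a_1,\varphi_n)|^2 \leq C_T \left[\lambda_n^{2\beta}\,|(a_1,\varphi_n)|^2\right]^{a} \cdot |(u(\cdot,T),\varphi_n)|^{2(1-a)},
$$
with all $T$-dependent factors $T^{-2(1-a)}, T^{2\alpha(1-a)}$ absorbed into $C_T$.

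Finally I would sum over $n$ and invoke H\"older's inequality with conjugate exponents $1/a$ and $1/(1-a)$, which yields
$$
\|a_1\|_{D((-\Delta)^{-\gamma})}^2 \leq C_T \left(\sum_{n=1}^\infty \lambda_n^{2\beta}|(a_1,\varphi_n)|^2\right)^{\!a}\!\left(\sum_{n=1}^\infty |(u(\cdot,T),\varphi_n)|^2\right)^{\!1-a} \leq C_T\,M^{2a}\,\|u(\cdot,T)\|_{L^2(\Omega)}^{2(1-a)}
$$
after using Parseval and the a priori assumption; taking a square root delivers the stated bound with the advertised exponents $(1-\gamma)/(1+\beta)$ and $(\beta+\gamma)/(1+\beta)$. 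I do not anticipate a serious obstacle, since the heavy lifting was already done in Theorem \ref{thm-isp} and Corollary \ref{coro-<ml<}; the main thing to get right is the choice of $a$ (so that the $\lambda_n$-exponents balance), carefully splitting the case $\alpha\in(1,\frac43]$ (where $C_T$ is a clean function of $T$) from the case $\alpha\in(\frac43,2)$ (where one must cite Lemma \ref{lem-ml2-lower} and the root-avoidance assumption on $T$), and verifying the degenerate endpoints $a=0$ (which recovers Theorem \ref{thm-isp}) and $a\to 1$ (excluded by $\beta+\gamma\ne 0$).
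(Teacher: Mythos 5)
Your proposal is correct and follows essentially the same route as the paper: a H\"older interpolation with exponents $\frac{1+\beta}{1-\gamma}$ and $\frac{1+\beta}{\beta+\gamma}$ applied to $\lambda_n^{-2\gamma}|(a_1,\varphi_n)|^2$, the a priori bound $M$ for the $(-\Delta)^\beta$-factor, and the Mittag-Leffler lower bound (under (i) or (ii)) for the measurement factor. The only cosmetic difference is that you insert the spectral formula \eqref{eq-a1_fourier} termwise before applying H\"older, whereas the paper applies H\"older first and then bounds $\sum_n \lambda_n^{-2}|(a_1,\varphi_n)|^2$ by citing Theorem \ref{thm-isp} with $\gamma=1$; the two are equivalent.
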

\begin{proof}
Using the Cauchy-Schwarz inequality
$$
\sum_{k=1}^\infty \xi_k \zeta_k \le  \left(\sum_{k=1}^\infty \xi_k^p \right)^{1/p} \left(\sum_{k=1}^\infty  \zeta_k^q\right)^{1/q}
$$
with $p=\frac{1+\beta}{1-\gamma}$ and $q=\frac{\beta+1}{\beta+\gamma}$, here we should understand $p=+\infty$ if $\gamma=1$, we see that
$$
\begin{aligned}
\|a_1\|_{D((-\Delta)^{-\gamma})}^2
&= \sum_{n=1}^\infty (a_1,\varphi_n )_{L^2(\Omega)}^2 \lambda_n^{-2\gamma} \\
&= \sum_{n=1}^\infty \left( \lambda_n^{\frac{2\beta(1-\gamma)}{1+\beta}} |(a_1,\varphi_n )|^{\frac{2(1-\gamma)}{1+\beta}} \right) \left( \lambda_n^{-2\gamma-\frac{2\beta(1-\gamma)}{1+\beta}} |(a_1,\varphi_n )|^{2 - \frac{2(1-\gamma)}{1+\beta}} \right)\\
&\le \left(\sum_{n=1}^\infty  \lambda_n^{2\beta} |(a_1,\varphi_n )_{L^2(\Omega)}|^{2} \right)^{\frac{1-\gamma}{1+\beta}} \left(\sum_{n=1}^\infty  \lambda_n^{-2} |(a_1,\varphi_n )_{L^2(\Omega)}|^{2} \right)^{\frac{\beta+\gamma}{\beta+1}} ,
\end{aligned}
$$
which combined with the assumption $\|(-\Delta)^{\beta} a_1\|_{L^2(\Omega)} \le M$ further implies that
$$
\begin{aligned}
\|a_1\|_{D((-\Delta)^{-\gamma})}^2
\le M^{\frac{2(1-\gamma)}{1+\beta}} \left(\sum_{n=1}^\infty  \lambda_n^{-2} |(a_1,\varphi_n )_{L^2(\Omega)}|^{2} \right)^{\frac{\beta+\gamma}{\beta+1}}.
\end{aligned}
$$
Moreover, from the estimate in Theorem \ref{thm-isp} with $\gamma=1$, we see that
$$
\begin{aligned}
\|a_1\|_{D((-\Delta)^{-\gamma})}^2
\le C_T M^{\frac{2(1-\gamma)}{1+\beta}} \left\| u(\cdot, T)\right\|_{L^2(\Omega)}^{\frac{2(\beta+\gamma)}{\beta+1}},
\end{aligned}
$$
which completes the proof of the theorem.
\end{proof}

\section{Scattered point measurement-based regularization}\label{sec-num}
\subsection{Settings}
For stating the Tikhonov regularization method based on scattered point measurement, we collect a set of  scattered points $ \{ x_i \}_{i = 1}^n$ which are such that $ x_i \neq x_j $ for $ i \neq j $ and are quasi-uniformly distributed in $ \Omega $, that is, there exists a positive constant $ B $ such that $ d_{\max} \le B d_{\min} $ , where $ d_{\max}>0$ and $ d_{\min}>0$ are defined by
$$
d_{\max} = \sup_{x \in \Omega} \inf_{1 \leq i \leq n} | x - x_i |,\quad
d_{\min} = \inf_{1 \leq i \neq j \leq n} | x_i - x_j | .
$$
Furthermore, for any $u,v\in C(\overline\Omega)$ and $y\in\mathbb R^n$, we define
$$
(y,v)_n := \frac1n \sum_{i=1}^n y_i v(x_i),\quad (u,v)_n:=\frac1n \sum_{i=1}^n u(x_i)v(x_i),
$$
and the discrete semi-norm
$$
\|u\|_n := \left(\sum_{i=1}^n \frac{u^2(x_i)}{n} \right)^{\frac12},\quad u\in C(\overline \Omega).
$$
We have the inequalities connecting discrete semi-norm and the Sobolev norms.
\begin{lemma}\cite[Theorems 3.3 and 3.4]{utreras1988convergence}
\label{lem-u-un}
There exists a constant  $C>0$  such that for all  $  u \in H^k(\Omega)$ with $k>\frac{d}2$ , the following estimates are valid:
\begin{equation}\label{esti-u<un}
\begin{aligned}
\| u \|^2_{L^2(\Omega)} \leq& C \left( \| u \|^2_n + n^{-\frac{2k}{d}} \| u \|^2_{H^k(\Omega)} \right),\\
\| u \|^2_n \leq& C \left( \| u \|^2_{L^2(\Omega)} + n^{-\frac{2k}{d}} \| u \|^2_{H^k(\Omega)} \right).
\end{aligned}
\end{equation}
\end{lemma}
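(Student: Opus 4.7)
The plan is to derive both inequalities in \eqref{esti-u<un} from local estimates on cells adapted to the scattered points, then sum over cells. Since $k>d/2$, the Sobolev embedding $H^k(\Omega)\hookrightarrow C(\overline\Omega)$ makes the pointwise values $u(x_i)$ well defined, so every quantity in the statement is meaningful. The first step is to associate to $\{x_i\}_{i=1}^n$ a family of disjoint subdomains $\{\Omega_i\}$ (for instance Voronoi cells in $\Omega$) with $x_i\in\Omega_i$ and $\overline\Omega=\bigcup_i\overline{\Omega_i}$. The quasi-uniformity assumption $d_{\max}\le Bd_{\min}$ ensures uniform shape-regularity of the $\Omega_i$, with diameter and volume comparable to $h:=n^{-1/d}$ and $n^{-1}$ respectively; these are the geometric facts that make all constants below independent of $i$ and $n$.

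For the easy inequality $\|u\|_n^2\le C(\|u\|_{L^2(\Omega)}^2+n^{-2k/d}\|u\|_{H^k(\Omega)}^2)$, I would pull $\Omega_i$ back to a fixed reference cell $\widehat\Omega$ of unit size via $\hat x=(x-x_i)/h$ and apply the Sobolev embedding on $\widehat\Omega$ to obtain $|\hat u(\hat x_i)|^2\le C(\|\hat u\|_{L^2(\widehat\Omega)}^2+|\hat u|_{H^k(\widehat\Omega)}^2)$. Rescaling yields the local bound $h^d|u(x_i)|^2\le C\|u\|_{L^2(\Omega_i)}^2+Ch^{2k}|u|_{H^k(\Omega_i)}^2$, and summing over $i$ produces the claimed estimate after recognizing $\sum_i h^d |u(x_i)|^2=\|u\|_n^2$.

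For the harder inequality $\|u\|_{L^2(\Omega)}^2\le C(\|u\|_n^2+n^{-2k/d}\|u\|_{H^k(\Omega)}^2)$ a polynomial-reproduction device of order $k$ is required. The plan is: on each cell $\Omega_i$, slightly enlarge it to a patch $\widetilde\Omega_i$ whose diameter is still $\sim h$ but which, by quasi-uniformity, contains a unisolvent set of at least $\dim\mathcal P_{k-1}$ scattered points $\{x_j\}_{j\in J_i}$. Build a polynomial interpolant $p_i\in\mathcal P_{k-1}$ matching $u$ at those points. A Bramble-Hilbert argument on the rescaled cell delivers $\|u-p_i\|_{L^2(\Omega_i)}\le Ch^k\|u\|_{H^k(\widetilde\Omega_i)}$, while norm equivalence on the finite-dimensional polynomial space on $\widehat\Omega$ gives $\|p_i\|_{L^2(\Omega_i)}^2\le Ch^d\sum_{j\in J_i}|u(x_j)|^2$. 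Applying the triangle inequality, summing over $i$, and using the uniformly bounded overlap of $\{\widetilde\Omega_i\}$ (again a consequence of quasi-uniformity) then yields the second inequality in \eqref{esti-u<un}.

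The main obstacle, and where the cited theorems of Utreras do the serious work, is verifying uniform unisolvence of the local scattered-point configurations: one must argue that, up to an $i$-independent enlargement factor, every patch contains enough well-separated points to permit polynomial interpolation of degree $k-1$ with operator norm controlled independently of $i$ and $n$. Without such uniformity, the constant in the norm equivalence for $\|p_i\|_{L^2(\Omega_i)}$ could blow up as $n\to\infty$, destroying the sharp $n^{-2k/d}$ decay. Establishing this purely from the quasi-uniformity bound $d_{\max}\le B d_{\min}$ is the technical heart of the proof.
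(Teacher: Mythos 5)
The paper itself does not prove this lemma: it is quoted verbatim from Utreras \cite[Theorems 3.3, 3.4]{utreras1988convergence}, so there is no internal argument to compare against. Your sketch follows what is essentially the same route used in that literature (Duchon-style local estimates on a quasi-uniform cell decomposition): scaling each cell to a reference domain and applying the embedding $H^k\hookrightarrow L^\infty$ for $k>\frac d2$ gives the bound of $\|u\|_n$ by $\|u\|_{L^2(\Omega)}+n^{-k/d}\|u\|_{H^k(\Omega)}$, and a local polynomial reproduction of degree $k-1$ plus a Bramble--Hilbert estimate on patches of diameter $\sim n^{-1/d}$ gives the reverse direction; summing with bounded overlap is exactly how the $n^{-2k/d}$ rate appears. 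Your scaling bookkeeping ($h^d|u(x_i)|^2\le C\|u\|^2_{L^2(\Omega_i)}+Ch^{2k}|u|^2_{H^k(\Omega_i)}$ with $h=n^{-1/d}$) is correct, and you correctly identify the genuine technical core that your sketch defers: a single Voronoi cell contains only one data point, so one must pass to patches whose diameter is a fixed multiple (depending on $k$, $d$ and the quasi-uniformity constant $B$) of $h$, and what is needed there is not bare unisolvence but a quantitative norming-set bound for $\mathcal P_{k-1}$ with constants independent of $i$ and $n$; this is precisely what the cited theorems supply, using quasi-uniformity together with the smallness of the fill distance and the regularity (cone/Lipschitz condition) of $\partial\Omega$, the latter also being what keeps the constants uniform on boundary-cut cells. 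Two minor points to make explicit if you were to flesh this out: the Sobolev constant must be uniform over the rescaled family of cells (shape regularity plus a uniform cone condition, not automatic for raw Voronoi cells near $\partial\Omega$), and the estimates hold with a constant depending on $B$, $k$, $d$, $\Omega$, implicitly for the quasi-uniform regime in which the fill distance is small enough for the norming-set argument to apply.
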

Now let $S$ be a linear operator from $L^2(\Omega)$ to $H^2(\Omega)$ which is defined as follows:
$$
Sa_1(\cdot) = u(\cdot,T).
$$
In view of the stability results in Theorem \ref{thm-isp}, it follows that the forward operator $S$ is bounded and one-to-one from $L^2(\Omega)$ to $H^2(\Omega)$. Moreover, let $a^*\in L^2(\Omega) $ be the unknown initial value of the problem \eqref{eq-gov}. We assume that the measurement data contains noise and is presented in the following form:
\begin{equation}\label{ob-noise0}
m_i = (Sa^*)(x_i) + e_i, \quad i = 1,2,...,n,
\end{equation}
where $\{e_i\}_{i=1}^n$ denote a sequence of random variables that are independent and identically distributed over the probability space $(X, \mathscr{F}, \mathbb{P})$. The expectation is such that $\mathbb{E}[e_i] = 0$, and the variances are bounded by $\sigma^2$, that is, $\mathbb{E}[e_i^2] \leq \sigma^2$.

We denote $\mathbf{x}:=(x_1,x_2,\cdots,x_n)$, $\mathbf{m} = (m_1, m_2, ..., m_n)^T$, and $\mathbf{e}:=(e_1,e_2,\cdots,e_n)$. Then the above term \eqref{ob-noise0} can be rephrased as the vector form:
\begin{equation}\label{ob-noise}
\mathbf{m} = (Sa^*)(\mathbf{x}) + \mathbf{e}.
\end{equation}

We seek a numerical solution, denoted as $ a_n^*$, for the unknown initial value $a^*$, utilizing the Tikhonov regularization form:
\begin{equation}\label{Tik-vec}
\arg\min_{a \in X} \| (Sa)(\mathbf{x}) - \mathbf{m}\|_n^2 + \rho_n \| a \|^2_{X},
\end{equation}
where $X:=D((-\Delta)^\gamma)$ with $\gamma\in \mathbb R$ and $ \rho_n > 0 $ is called a regularization parameter.

\subsection{Useful lemmas}
Recalling the set of positive roots of the Mittag-Leffler function $E_{\alpha,2}(-t)$: $\{t_k\}_{k=1}^N$, and Corollary \ref{coro-<ml<}, we can show the asymptotic behavior of the singular value of the forward operator $S$. More precisely, we have
\begin{lemma}\label{lem-eigen-H1}
Assuming one of the following conditions:
\begin{enumerate}[label = \roman*.]
    \item $\alpha\in (1,\frac43]$ and $T>0$;
    \item $\alpha\in(\frac43,2)$ and $T^\alpha \not\in \cup_{k=1}^\infty \{ \frac{t_1}{\lambda_k}, \cdots, \frac{t_N}{\lambda_k}\}$.
\end{enumerate}
is valid. Letting $X=D((-\Delta )^\gamma)$ with $\gamma>0$, then the eigenvalues  $  0 < \mu_1 \leq \mu_2 \leq \cdots  $  of the eigenvalue problem
\begin{equation}\label{eigen-S*S}
\mu (S\psi, Sv)_{L^2(\Omega)}=(\psi, v)_{X}=
((-\Delta)^{\gamma}\psi, (-\Delta)^{\gamma}v)_{L^2(\Omega)} ,\quad \forall v \in X
\end{equation}
satisfy that
$$
\mu_k \geq  C k^{\frac{4(1+\gamma)}d},\quad k = 1, 2, \ldots,
$$ for some constant  $  C >0 $  depending only on the operator $S$.
\end{lemma}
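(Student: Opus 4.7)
The plan is to diagonalize the generalized eigenvalue problem \eqref{eigen-S*S} in the orthonormal Dirichlet basis $\{\varphi_k\}_{k\ge 1}$. First I would substitute the expansion $\psi=\sum_n c_n \varphi_n$ into the representation \eqref{eq-u-sub} (with $t=T$ and $a_1$ replaced by $\psi$) to obtain
$$
S\psi = \sum_{n=1}^\infty T\,E_{\alpha,2}(-\lambda_n T^\alpha)\,c_n\,\varphi_n.
$$
Testing \eqref{eigen-S*S} against $v=\varphi_k$ and using the orthonormality of $\{\varphi_k\}$ reduces the infinite-dimensional generalized eigenvalue problem to the decoupled scalar system
$$
\mu\, T^2 \bigl[E_{\alpha,2}(-\lambda_k T^\alpha)\bigr]^2 c_k = \lambda_k^{2\gamma} c_k, \qquad k\ge 1.
$$
Under either assumption (i) or (ii), Corollary \ref{coro-<ml<} ensures $E_{\alpha,2}(-\lambda_k T^\alpha)\neq 0$ for every $k$, so the eigenpairs are precisely $\varphi_k$ with eigenvalues
$$
\widetilde{\mu}_k := \frac{\lambda_k^{2\gamma}}{T^2\bigl[E_{\alpha,2}(-\lambda_k T^\alpha)\bigr]^2}.
$$

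Next, I would insert the upper bound $|E_{\alpha,2}(-\lambda_k T^\alpha)|\le C_T/(1+\lambda_k T^\alpha)$ supplied by Corollary \ref{coro-<ml<} to deduce
$$
\widetilde{\mu}_k \ge \frac{\lambda_k^{2\gamma}(1+\lambda_k T^\alpha)^2}{C_T^2\,T^2} \ge C\,\lambda_k^{2+2\gamma},
$$
where $C>0$ depends only on $\alpha,\gamma,T$. Combined with the Weyl asymptotics $\lambda_k\sim k^{2/d}$ recalled after \eqref{eigensys}, this produces $\widetilde{\mu}_k \ge C\,k^{4(1+\gamma)/d}$ for every index.

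A small bookkeeping issue remains because the sequence $\{\widetilde{\mu}_k\}$ indexed by the Dirichlet modes need not be monotone: in case (ii) the denominator can dip for those finitely many modes where $\lambda_k T^\alpha$ is comparatively close to an excluded real zero $t_j$, even though the overall growth is asymptotically uniform. To pass to the increasingly ordered sequence $\{\mu_k\}$, I would run a counting argument: for any level $M>0$, the cardinality of $\{k:\widetilde{\mu}_k \le M\}$ is bounded by the cardinality of $\{k:\lambda_k \le (M/C)^{1/(2+2\gamma)}\}$, which, by $\lambda_k\sim k^{2/d}$, is at most $C'\,M^{d/(4(1+\gamma))}$. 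Inverting this count yields the desired lower bound $\mu_k \ge C\,k^{4(1+\gamma)/d}$ for the sorted sequence.

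The only non-routine point is the recognition that, via the reciprocal, the $L^\infty$-type upper estimate for the Mittag-Leffler function furnished by Corollary \ref{coro-<ml<} converts directly into the sought lower estimate for the eigenvalues $\widetilde{\mu}_k$. Once that is in place, both the simultaneous diagonalization by $\{\varphi_k\}$ and the reordering via Weyl counting are entirely mechanical, so the genuine analytical content of the lemma is inherited from Corollary \ref{coro-<ml<} rather than produced here.
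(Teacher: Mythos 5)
Your proof is correct and follows essentially the same route as the paper: diagonalize $S$ in the Dirichlet basis via $S\varphi_k = TE_{\alpha,2}(-\lambda_k T^\alpha)\varphi_k$, turn the Mittag-Leffler upper bound of Corollary \ref{coro-<ml<} into the lower bound $\mu_k \ge C\lambda_k^{2+2\gamma}$, and conclude with the asymptotics $\lambda_k \sim k^{2/d}$. Your extra counting/rearrangement step addressing the possible non-monotonicity of the mode-indexed eigenvalues is a small refinement that the paper passes over silently, but it does not alter the substance of the argument.
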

\begin{proof}
Firstly, we give a lower estimate for the eigenvalue $\eta$ of the following eigenvalue problem
\begin{equation}\label{eigen-S}
\psi = \eta S\psi,\quad \psi \in X.
\end{equation}
Recalling the Dirichlet eigensystem $\{\lambda_k, \varphi_k\}_{k=1}^\infty$ of the operator $-\Delta$, we first set $a = \varphi_k$. Then the solution $u$ at $t=T$ to the problem \eqref{eq-gov} can be represented by
$$
u(\cdot,T) = TE_{\alpha,2}(-\lambda_k T^\alpha)\varphi_k.
$$
Moreover, from assumptions in this lemma, we can see that $TE_{\alpha,2}(-\lambda_k T^\alpha)\neq0$ for $k\in\mathbb N^+$. Consequently, according to the definition of the forward operator $S$, we have
\begin{equation}\label{eq-eta_k}
S(\varphi_k) = TE_{\alpha,2}(-\lambda_k T^\alpha)\varphi_k=:\eta_k^{-1} \varphi_k,
\end{equation}
which means that $\{\eta_k, \varphi_k\}_{k=1}^\infty$ is an eigensystem of the problem \eqref{eigen-S}. Now by letting $\psi=v=\varphi_k$ in \eqref{eigen-S*S}, we see that
$$
\left((-\Delta)^{\gamma}\varphi_k, (-\Delta)^{\gamma} \varphi_k  \right) = \mu (S\varphi_k, S\varphi_k),
$$
from which by further noting  \eqref{eq-eta_k} and the facts that $(-\Delta)^{\gamma} \varphi_k = \lambda_k^{\gamma} \varphi_k$, we see that
$$
\mu_k = \lambda_k^{2\gamma} \eta_k^2.
$$
Moreover, under assumptions in this lemma, we conclude from Corollary \ref{coro-<ml<} that the eigenvalue $\eta_k$ of the problem \eqref{eigen-S} admits the following lower bound:
$$
\eta_k = \frac1{|TE_{\alpha,2}(-\lambda_k T^\alpha)|}\ge C\lambda_k,
$$
which further combined with the asymptotic estimate $\lambda_k \sim k^{\frac{2}{d}}$ as $k\to\infty$ implies that $\eta_k \ge Ck^{\frac2d}, \forall k \in \mathbb N^+.
$ Finally, we see that $\mu_k = \lambda_k^{2\gamma} \eta_k^2 \ge Ck^{\frac{4(1+\gamma)}d}$ for any integer $k\ge1$. This completes the proof of the lemma.
\end{proof}

The Lemma \ref{lem-Vn} shows that the solution is attained in a
finite dimensional subspace $V_n$, which is necessary in stochastic convergence analysis of Lemma \ref{lem-Ean-Ea*}.
\begin{lemma}\label{lem-Vn}
Given $\mathbf{m}\in \mathbb R^n$ and let $\mathbf{x}=\{x_i\}_{i=1}^n$ be scattered points which is quasi-uniformly distributed in $\Omega$, then there exists an $n$-dimensional subspace $V_n$ of the Hilbert space $X:=D((-\Delta)^\gamma)$, $\gamma\ge0$ such that
$$
\min_{a\in X, Sa(\mathbf{x})=\mathbf{m}} \|a\|_{X}^2 = \min_{a\in V_n, Sa(\mathbf{x})=\mathbf{m}} \|a\|_{X}^2.
$$
\end{lemma}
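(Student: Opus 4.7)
The plan is to prove this representer-type statement by constructing $V_n$ explicitly as the span in $X$ of the Riesz representers of the $n$ point-evaluation constraints, and then using an orthogonal-projection argument to show that any feasible element can be replaced by its projection onto $V_n$ without increasing the $X$-norm. This is the standard strategy for minimum-norm interpolation in a Hilbert space; the content of the lemma is that it applies in our particular setting, and the nontrivial input is that pointwise evaluation of $Sa$ defines a bounded linear functional on $X$.

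First I would verify that each linear functional $L_i : X \to \mathbb{R}$, $L_i(a) := (Sa)(x_i)$, is bounded. For $\gamma \geq 0$ the embedding $X = D((-\Delta)^\gamma) \hookrightarrow L^2(\Omega)$ is continuous, since $\|a\|_{L^2(\Omega)} \leq \lambda_1^{-\gamma}\|a\|_X$; Lemma \ref{lem-forward} gives $S \in \mathcal{L}(L^2(\Omega), H^2(\Omega))$ (with $a_0 = 0$); and the Sobolev embedding $H^2(\Omega) \hookrightarrow C(\overline{\Omega})$ holds since $d \leq 3$. Chaining these three continuous maps yields $|L_i(a)| \leq C\|a\|_X$ uniformly in $i$. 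By the Riesz representation theorem applied in the Hilbert space $X$, there exist $\phi_1,\dots,\phi_n \in X$ with $L_i(a) = (a,\phi_i)_X$ for all $a \in X$.

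Next I would set $V_n := \mathrm{span}\{\phi_i\}_{i=1}^n$, padding by arbitrary vectors from the orthogonal complement if the $L_i$ happen to be linearly dependent so that $\dim V_n = n$ exactly, and let $P$ denote the orthogonal projection in $X$ onto $V_n$. For any $a \in X$ satisfying $Sa(\mathbf{x}) = \mathbf{m}$, the residual $(I - P)a$ is $X$-orthogonal to each $\phi_i$, so
\begin{equation*}
L_i((I-P)a) = ((I-P)a,\phi_i)_X = 0, \qquad i=1,\dots,n,
\end{equation*}
which gives $S(Pa)(x_i) = m_i$, i.e.\ $Pa$ is still feasible. The Pythagorean identity $\|a\|_X^2 = \|Pa\|_X^2 + \|(I-P)a\|_X^2$ then implies $\|Pa\|_X \leq \|a\|_X$, with equality forcing $a = Pa \in V_n$. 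Taking infima over the (possibly empty) feasible set on each side yields the claimed equality of the two minima.

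The main, and essentially the only, subtle point is the continuity of $L_i$ on $X$: without the smoothing afforded by $S$, pointwise values of a generic $L^2$-function would be undefined, and the Riesz step would fail. Thus both Lemma \ref{lem-forward}, which supplies the $L^2 \to H^2$ gain, and the ambient dimension restriction $d \leq 3$, which permits the embedding into $C(\overline{\Omega})$, are indispensable for the argument.
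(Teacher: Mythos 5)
Your proof is correct, and at its core it is the same projection argument as the paper's: replace a feasible $a$ by its $X$-orthogonal projection onto an $n$-dimensional subspace spanned by elements orthogonal to the null space $V=\{a: (Sa)(\mathbf{x})=0\}$, check that the projection preserves the point constraints, and use the Pythagorean identity to conclude the norm does not increase. The difference lies in how the subspace is produced. The paper invokes the injectivity of $S$ to assert the existence of functions $\phi_i$ with $(S\phi_i)(x_j)=\delta_{ij}$ and then corrects them to a cardinal basis $\psi_i=\phi_i-P_V[\phi_i]$, which has the side benefit of delivering the basis with $(S\psi_i)(x_j)=\delta_{ij}$ that is reused later in Lemma \ref{lem-eigen-Sn}. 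You instead prove that each point evaluation $a\mapsto (Sa)(x_i)$ is a bounded functional on $X$ (via $X\hookrightarrow L^2(\Omega)$, Lemma \ref{lem-forward}, and $H^2(\Omega)\hookrightarrow C(\overline\Omega)$ for $d\le 3$) and take the span of the Riesz representers, padding if they are dependent. Since the span of those representers is exactly $V^{\perp}$ (when the functionals are independent), the two constructions produce the same space; your route has the merit of making explicit the continuity of the evaluation functionals, which the paper leaves implicit, and of not presupposing the solvability of the interpolation conditions $(S\phi_i)(x_j)=\delta_{ij}$, while the paper's route yields the concrete interpolation operator and discrete-orthonormal basis needed downstream. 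Your handling of possibly dependent functionals and of a possibly empty feasible set (both minima then being vacuous) is also sound, so there is no gap.
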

\begin{proof}
Firstly, we define a subset $V\subset X$ which collects all the functions $a\in X$ such that $(Sa)(\mathbf x)=0$. Since the operator $S$ is linear, it is not difficult to check that $V$ is a linear subspace of $X$. Then we define the projection operator $P_V:X\to V$ by
$$
 (P_V[a], v)_{X} = (a,v)_{X},\quad \forall v\in V.
$$
Now in view of the fact that the forward operator $S:L^2(\Omega) \to H^2(\Omega)$ is one-to-one, we can choose a sequence of functions $\{\phi_i\}_{i=1}^n\subset X$ such that $(S\phi_i)(x_j)= \delta_{ij}$, $i,j=1,\cdots,n$. Taking $\psi_i := \phi_i - P_V[\phi_i]$, $i=1,\cdots,n$, we can easily check that $(S\psi_i)(x_j)= \delta_{ij}$ also holds for any $i,j=1,\cdots,n$. We further define the linear space $V_n := {\rm span} \{\psi_1,\cdots,\psi_n\}$, and for any $a\in X$, we define the interpolation operator $I: X\to V_n$ as follows:
$$
Ia= \sum_{i=1}^n (Sa)(x_i)\psi_i,\quad a\in X.
$$
Combining with the definitions of the subspaces $V_n$ and $V$ of $X$, we immediately see that $Ia\in V_n$ and $a-Ia\in V$, hence that $(v,\phi_i - P_V[\phi_i])_{X} = 0$ for all $v\in V$, which further implies that
$$
\begin{aligned}
(a-Ia, Ia)_{X}
&= \left(a - Ia, \sum_{i=1}^n (Sa)(x_i)(\phi - P_V[\phi_i])\right)_{X} \\
&= \sum_{i=1}^n  (Sa)(x_i)(a - Ia,\phi_i - P_V[\phi_i])_{X}=0,
\end{aligned}
$$
that is, $(Ia, Ia)_{X}= (a, Ia)_{X}$. Finally, by employing the Cauchy-Schwarz inequality, we arrive at the inequality $(Ia, Ia)_{X} \le  (a, a)_{X}$, hence we have
$$
\min_{a\in V_n, Sa(\mathbf{x})=\mathbf{m}} \|a\|^2_{X} =  \min_{a\in X,Sa(\mathbf{x})=\mathbf{m}} \|a \|^2_{X}.
$$
This completes the proof of the lemma.
\end{proof}
\begin{lemma}\label{lem-eigen-Sn}
Under the same assumptions in Lemmas \ref{lem-eigen-H1} and \ref{lem-Vn}, then the space $V_n$ becomes an $n$-dimensional Hilbert space with the inner product $(S\cdot,S\cdot)_n$. Moreover, the eigenvalue problem
\begin{equation}\label{eigen-Vn}
( \psi, v )_{X} = \mu^{(n)} ( S \psi, S v )_n, \quad \forall v \in V_n
\end{equation}
has $ n $ eigenvalues $0< \mu_1^{(n)} \leq \mu_2^{(n)}  \leq \cdots \leq \mu_n^{(n)} <\infty$, and all the eigenfunctions form an orthogonal basis of $ V_n $ with respect to the inner product $ ( S \cdot, S\cdot)_n $. Moreover, there exists a constant $ C > 0 $ independent of $ k ,n$ such that
$$
\mu_k^{(n)} \geq C k^{\frac{4(1+\gamma)}{d}},\quad k = 1, 2, \ldots, n.
$$
\end{lemma}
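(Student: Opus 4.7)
The plan is to establish the three claims in order: first that $V_n$ becomes an $n$-dimensional Hilbert space under $(S\cdot,S\cdot)_n$; second that simultaneous diagonalization of the two inner products yields $n$ positive eigenvalues with eigenfunctions $(S\cdot,S\cdot)_n$-orthogonal; and third the asymptotic lower bound $\mu_k^{(n)}\geq Ck^{4(1+\gamma)/d}$ via a min--max comparison with Lemma \ref{lem-eigen-H1}. Steps 1 and 2 should be bookkeeping. For Step 1, I would use the basis $\{\psi_i\}_{i=1}^n$ of $V_n$ constructed in Lemma \ref{lem-Vn}: since $\psi_i=\phi_i-P_V[\phi_i]$ with $(S\phi_i)(x_j)=\delta_{ij}$ and $P_V[\phi_i]\in V$ kills $S$-evaluation at the sampling points, one has $(S\psi_i)(x_j)=\delta_{ij}$, so that for any $\psi=\sum_i c_i\psi_i\in V_n$ the semi-norm collapses to $\|S\psi\|_n^2=\frac{1}{n}\sum_j c_j^2$, which is manifestly positive definite. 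For Step 2, both $(\cdot,\cdot)_X$ and $(S\cdot,S\cdot)_n$ are symmetric positive definite bilinear forms on the $n$-dimensional $V_n$, so standard simultaneous diagonalization of two SPD forms in finite dimension produces exactly $n$ positive eigenvalues $\mu_1^{(n)}\leq\cdots\leq\mu_n^{(n)}$ with an $(S\cdot,S\cdot)_n$-orthogonal basis of eigenfunctions.

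The heart will be Step 3. I plan to invoke the Courant--Fischer characterization
\[
\mu_k^{(n)} = \min_{\substack{W\subset V_n\\ \dim W=k}}\max_{\psi\in W\setminus\{0\}}\frac{\|\psi\|_X^2}{\|S\psi\|_n^2},
\]
and apply Lemma \ref{lem-u-un} to $u=S\psi$ with Sobolev index $k_0=2(1+\gamma)$. The Fourier representation of $S$ combined with the Mittag-Leffler decay $|TE_{\alpha,2}(-\lambda_j T^\alpha)|\leq C\lambda_j^{-1}$ from Corollary \ref{coro-<ml<} should give $\|S\psi\|_{H^{k_0}}^2 \leq C\sum_j\lambda_j^{k_0-2}|(\psi,\varphi_j)|^2 = C\|\psi\|_X^2$, whence
\[
\|S\psi\|_n^2 \leq C\|S\psi\|_{L^2(\Omega)}^2 + Cn^{-4(1+\gamma)/d}\|\psi\|_X^2.
\]
Substituting into the min--max and using that $x\mapsto 1/(C/x+c)$ is increasing, I obtain
\[
\mu_k^{(n)} \geq \frac{1}{C/\widetilde{\mu}_k + Cn^{-4(1+\gamma)/d}},\qquad \widetilde{\mu}_k := \min_{\substack{W\subset V_n\\ \dim W=k}}\max_{\psi\in W\setminus\{0\}}\frac{\|\psi\|_X^2}{\|S\psi\|_{L^2(\Omega)}^2}.
\]
Since $V_n\subset X$ only shrinks the family of admissible $W$'s, $\widetilde{\mu}_k\geq \mu_k$ with $\mu_k$ as in Lemma \ref{lem-eigen-H1}; and for $k\leq n$, both $1/\mu_k$ and $n^{-4(1+\gamma)/d}$ are majorised by $Ck^{-4(1+\gamma)/d}$, yielding the claimed lower bound.

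The hard part will be the choice of $k_0$ in Lemma \ref{lem-u-un}: $k_0$ must simultaneously satisfy $k_0>d/2$ (for the sampling inequality to apply) and $k_0\leq 2(1+\gamma)$ (so that $\|S\psi\|_{H^{k_0}}$ is controlled by $\|\psi\|_X$ through the Mittag-Leffler decay). The value $k_0=2(1+\gamma)$ threads this needle for $d\leq 3$ and $\gamma>0$; if Lemma \ref{lem-u-un} is only available at integer Sobolev order, a fractional Sobolev interpolation step will be needed. Throughout the argument one must track that the constants remain independent of both $k$ and $n$, which is the case since all absorbed constants come from Lemmas \ref{lem-forward}, \ref{lem-u-un}, \ref{lem-eigen-H1} and Corollary \ref{coro-<ml<}.
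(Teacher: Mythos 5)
Your proposal is correct and follows essentially the same route as the paper: the Kronecker-delta basis $\{\psi_i\}$ from Lemma \ref{lem-Vn} gives the Hilbert-space structure, the generalized eigenvalue problem for two SPD forms gives the spectral decomposition, and the lower bound comes from the min--max principle combined with the sampling inequality of Lemma \ref{lem-u-un} at Sobolev order $2(1+\gamma)$, the smoothing bound $\|S\psi\|_{H^{2(1+\gamma)}(\Omega)}\le C\|\psi\|_X$ from the Mittag-Leffler decay, and the eigenvalue bound of Lemma \ref{lem-eigen-H1} together with $k\le n$. The only cosmetic difference is that you compare the $V_n$-restricted min--max $\widetilde{\mu}_k$ with $\mu_k$ by shrinking the family of subspaces, whereas the paper enlarges the minimization from $W\subset V_n$ to $W\subset X$; these are the same observation.
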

\begin{proof}
Recalling the linear subspace $V_n = \text{span} \{ \psi_i \}_{i=1}^n $ of $X$ as defined in the proof of Lemma \ref{lem-Vn}, we see that $(S\psi_i)(x_j) = \delta_{ij} $. Moreover, it is not difficult to see that $\{\psi_i\}_{i=1}^n$ is a basis for the space $V_n$ and is such that $(S\psi_i,S\psi_j)_n = \delta_{ij}$, $i,j=1,\cdots,n$.

We now assert that the space $V_n$ equipped with the inner product $(S\psi,S\phi)_n$, $\psi,\phi\in V_n$, becomes a Hilbert space of $n$-dimension. Consequently, the eigenvalue problem \eqref{eigen-Vn} is equivalent to a matrix eigenvalue problem
$$
\mathbf{A} \Psi = \mu^{(n)}  \mathbf{B} \Psi  \text{ for }  \Psi \in \mathbb{R}^n ,
$$
where $ \mathbf{A}, \mathbf{B} \in \mathbb{R}^{n \times n} $ are two symmetric positive definite matrices. Thus, the eigenvalue problem \eqref{eigen-Vn} has $n$ eigenvalues $ 0<\mu_1^{(n)} \leq \mu_2^{(n)} \leq \cdots \leq \mu^{(n)} _n <\infty$ and all eigenfunctions form an orthogonal basis of $ V_n $ with respect to the inner product $(S \cdot,S \cdot)_n$.

Now we give a lower bound of $ \mu_k^{(n)}$ for $k=1,2,\cdots,n$. By the min-max principle of the Rayleigh quotient for the eigenvalues and \eqref{esti-u<un} in Lemma \ref{lem-u-un}, we see that
$$
\begin{aligned}
\mu_k^{(n)} &= \min_{\substack{\text{dim}W=k,\\ W\subset V_n}} \max_{a \in W} \frac{(a,a)_{X}}{(Sa,Sa)_n}
\\
&\geq C \min_{\substack{\text{dim}W=k,\\ W\subset V_n}} \max_{a \in W} \frac{(a,a)_{X}}{(Sa,Sa)_{L^2(\Omega)} + n^{- 4(\gamma+1)/d} (Sa,Sa)_{H^{2(\gamma+1)}(\Omega)}}
\\
&\geq C \min_{\substack{\text{dim}W=k,\\ W\subset V_n}} \max_{a \in W} \frac{(a,a)_{X}}{(Sa,Sa)_{L^2(\Omega)} + n^{- 4(\gamma+1)/d} (Sa,Sa)_{D((-\Delta)^{\gamma+1})}}.
\end{aligned}
$$
Here the last inequality is due to the fact $\|\phi\|_{H^{2(\gamma+1)}(\Omega)} \le C\|\phi\|_{D((-\Delta)^{\gamma+1})}$ if $\gamma\ge0$. Moreover, by noting the estimates $(Sa,Sa)_{D((-\Delta)^{\gamma+1})}\leq C(a,a)_{D((-\Delta)^{\gamma})}$, we see that
$$
\begin{aligned}
\mu_k^{(n)}
&\geq C \min_{\substack{\text{dim}W=k,\\ W\subset X}} \max_{a \in W} \frac{(a,a)_{X}}{(Sa,Sa)_{L^2(\Omega)} + n^{- 4(\gamma+1)/d} (a,a)_{D((-\Delta)^\gamma)}}
\\
&= C \min_{\substack{\text{dim}W=k,\\ W\subset X}} \max_{a \in W} \frac{(a,a)_{X}}{(Sa,Sa)_{L^2(\Omega)} + n^{- 4(\gamma+1)/d} (a,a)_X},\quad \gamma\ge0,
\end{aligned}
$$
which again combined with the Rayleigh quotient for the eigenvalues implies that
$$
\begin{aligned}
\mu_k^{(n)}
\geq C  \frac{1}{\mu_k^{-1} + n^{- 4(\gamma+1)/d}}.
\end{aligned}
$$
Finally, by using the results from Lemma \ref{lem-eigen-H1}, we complete the proof of the lemma.
\end{proof}

\subsection{Stochastic convergence analysis}
In this part, we will give the main result for the stochastic convergence of the Tikhonov regularization scheme \eqref{Tik-vec}. To this end, we first give a useful lemma.
\begin{lemma}\label{lem-Ean-Ea*}
Let $a^*\in X:=D((-\Delta)^\gamma)$ with $\gamma\ge0$ and let $a_n^* \in X$ be the unique solution of our Tikhonov regularization form \eqref{Tik-vec}. Then there exist constants $ \rho_0 > 0 $ and $ C > 0 $ such that the following estimates
\[
\mathbb{E} \bigl[ \| S a_n^* - S a^* \|^2_n \bigr] \leq C \left( \rho_n \| a^* \|^2_X +  \frac{\sigma^2}{n \rho_n^{\frac{d}4/(1+\gamma)}} \right)
\]
and
\[
\mathbb{E} \bigl[ \| a_n^* - a^* \|^2_{X} \bigr] \leq C \left( \| a^* \|^2_X + \frac{\sigma^2}{n \rho_n^{1 + \frac{d}4/(1+\gamma)}}\right).
\]
are valid for any $0< \rho_n \leq \rho_0 $. Here, the constant $C$ is independent of $n,a^*,a_n^*$.
\end{lemma}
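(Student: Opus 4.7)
The plan is to diagonalize the quadratic minimization via the generalized eigenbasis provided by Lemma \ref{lem-eigen-Sn}. First I would derive the first-order (normal) equation for $a_n^*$: for every $v\in X$,
\[
(Sa_n^* - \mathbf{m}, Sv)_n + \rho_n (a_n^*, v)_X = 0.
\]
Lemma \ref{lem-Vn} forces $a_n^* \in V_n$, so I may restrict test functions to $V_n$ and expand in the basis $\{\phi_j\}_{j=1}^n$ of Lemma \ref{lem-eigen-Sn}, normalized so that $(S\phi_j, S\phi_k)_n = \delta_{jk}$ and $(\phi_j,\phi_k)_X = \mu_j^{(n)}\delta_{jk}$. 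Let $Ia^* = \sum_j d_j \phi_j$ denote the interpolant of $a^*$ into $V_n$ introduced in the proof of Lemma \ref{lem-Vn}, so that $d_j = (Sa^*, S\phi_j)_n$ and, crucially, $S(Ia^*)(\mathbf{x}) = Sa^*(\mathbf{x})$; writing $\epsilon_j := (\mathbf{e}, S\phi_j)_n$, the noisy right-hand side in the diagonalized normal equation becomes $d_j + \epsilon_j$.

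Plugging $a_n^* = \sum_j c_j \phi_j$ into the normal equation and testing against $\phi_k$ yields the closed form
\[
c_j = \frac{d_j + \epsilon_j}{1 + \rho_n \mu_j^{(n)}}, \qquad j=1,\dots,n.
\]
Since $S(Ia^*)$ agrees with $Sa^*$ at every node, I obtain $\|Sa_n^* - Sa^*\|_n^2 = \sum_j (c_j - d_j)^2$; and since $a^* - Ia^*$ is $X$-orthogonal to $V_n$ (the Pythagorean identity implicit in the proof of Lemma \ref{lem-Vn}), $\|a_n^* - a^*\|_X^2 = \sum_j \mu_j^{(n)}(c_j - d_j)^2 + \|a^* - Ia^*\|_X^2$, with the second term dominated by $\|a^*\|_X^2$. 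Then I split each $(c_j - d_j)^2$ into a bias piece $(\rho_n \mu_j^{(n)}d_j)^2/(1+\rho_n\mu_j^{(n)})^2$ and a variance piece $\epsilon_j^2/(1+\rho_n\mu_j^{(n)})^2$ via $(a+b)^2 \le 2a^2 + 2b^2$.

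The bias is estimated by the filter-factor inequalities $t^2/(1+t)^2 \le t$ and $t^2/(1+t)^2 \le 1$ (with $t = \rho_n \mu_j^{(n)}$), which give $\rho_n \|Ia^*\|_X^2 \le \rho_n \|a^*\|_X^2$ for the first assertion and $\|Ia^*\|_X^2 \le \|a^*\|_X^2$ for the second. For the variance, the i.i.d.\ zero-mean assumption together with $\|S\phi_j\|_n = 1$ gives $\mathbb{E}[\epsilon_j^2] \le \sigma^2/n$, so the two sums to control are
\[
\frac{\sigma^2}{n}\sum_{j=1}^{n}\frac{1}{(1+\rho_n\mu_j^{(n)})^2} \qquad\text{and}\qquad \frac{\sigma^2}{n}\sum_{j=1}^{n}\frac{\mu_j^{(n)}}{(1+\rho_n\mu_j^{(n)})^2}.
\]
Inserting the lower bound $\mu_j^{(n)} \ge Cj^{4(1+\gamma)/d}$ from Lemma \ref{lem-eigen-Sn} and comparing with the integrals $\int_0^\infty (1+\rho_n Cx^{4(1+\gamma)/d})^{-2}\,dx$ and its weighted analogue (both convergent since $4(1+\gamma)/d \ge 4/3$ whenever $d\le 3$ and $\gamma\ge 0$) yields, via the scaling substitution $u = \rho_n C x^{4(1+\gamma)/d}$, the factors $\rho_n^{-d/(4(1+\gamma))}$ and $\rho_n^{-1 - d/(4(1+\gamma))}$, respectively.

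The main obstacle I expect lies in carefully justifying that the interpolant $Ia^*$ coincides with the $X$-orthogonal projection of $a^*$ onto $V_n$: this is what forces the cross term in $\|a_n^* - a^*\|_X^2 = \|a_n^* - Ia^*\|_X^2 + \|a^* - Ia^*\|_X^2$ to vanish and what supplies the bound $\|a^* - Ia^*\|_X \le \|a^*\|_X$. Unwinding the construction $\psi_i = \phi_i - P_V[\phi_i]$ from Lemma \ref{lem-Vn} shows $a^* - Ia^* \in V$ and $V \perp V_n$ in $X$, which closes this gap. The second subtle point is the uniformity of the integral comparison in $n$ and $\rho_n$: the smallness hypothesis $\rho_n \le \rho_0$ enters here to guarantee that the truncated Riemann sum is dominated by the integral up to an absolute constant independent of $n$.
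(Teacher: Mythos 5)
Your argument is correct, but it follows a genuinely different route from the paper. You diagonalize the Tikhonov minimizer explicitly: using that the minimizer lies in $V_n$ (which indeed follows from the interpolation operator $I$ built in the proof of Lemma \ref{lem-Vn}, since $S(Ia)(\mathbf{x})=Sa(\mathbf{x})$ and $\|Ia\|_X\le\|a\|_X$, plus uniqueness), you expand in the $(S\cdot,S\cdot)_n$-orthonormal eigenbasis of Lemma \ref{lem-eigen-Sn}, obtain the closed-form filter factors $c_j=(d_j+\epsilon_j)/(1+\rho_n\mu_j^{(n)})$, and run a classical bias--variance split, with the $X$-orthogonality $V\perp V_n$ supplying the Pythagorean identity and the bound $\|a^*-Ia^*\|_X\le\|a^*\|_X$. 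The paper instead never decomposes $a_n^*$: it works with the variational identity \eqref{eq-varia} and the energy norm $\|v\|^2_{\rho_n}=\rho_n\|v\|_X^2+\|Sv\|_n^2$, reduces everything to bounding the stochastic supremum $\sup_v (e,Sv)_n^2/\|v\|_{\rho_n}^2$ over $V_n$ via Lemma \ref{lem-Vn}, and estimates that supremum by Cauchy--Schwarz in the same eigenbasis, ending with the single sum $\sum_k(1+\rho_n\mu_k^{(n)})^{-1}$ and the same integral comparison; both statements of the lemma then drop out of one energy estimate. Your approach buys explicit filter-factor constants and a transparent bias/variance interpretation; the paper's buys brevity and avoids the two structural facts you must verify ($a_n^*\in V_n$ and $V\perp_X V_n$). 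One technical caution in your variance step: the weighted sum $\sum_j\mu_j^{(n)}(1+\rho_n\mu_j^{(n)})^{-2}$ cannot be bounded by simply inserting the lower bound $\mu_j^{(n)}\ge Cj^{4(1+\gamma)/d}$, because $\mu\mapsto\mu/(1+\rho_n\mu)^2$ is not monotone; you should first use $\mu/(1+\rho_n\mu)^2\le\rho_n^{-1}(1+\rho_n\mu)^{-1}$, whose right-hand side is decreasing in $\mu$, and then the eigenvalue lower bound and the integral comparison give the factor $\rho_n^{-1-d/(4(1+\gamma))}$ exactly as claimed. With that small repair, your proof is complete and yields the same two estimates.
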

\begin{proof}
In view of the argument in Theorem 2.3 in \cite{chen2022stochastic}, for problem \eqref{Tik-vec}, we can easily find that the minimizer $a_n^* \in X$ is exists and is unique and admits the variational form
\begin{equation}\label{eq-varia}
\rho_n (a_n^*, v)_X + (Sa_n^*, Sv)_n = (\mathbf{m}, Sv)_n, \quad \forall v \in X.
\end{equation}
For any $v \in X$, we define the energy norm $\| \cdot \|_{\rho_n}$ as follows:
$$
\| v \|_{\rho_n}^2 := \rho_n \|v\|_X^2 + \| Sv \|_n^2.
$$
By setting $v = a_n^* - a^*$ in equation \eqref{eq-varia}, and considering \eqref{ob-noise}, we derive the following:
\begin{equation}\label{esti-an-a*}
\| a_n^* - a^* \|_{\rho_n}^2 \leq C\rho_n \| a^* \|_X^2 + C\sup_{v \in X} \frac{|(e, Sv)_n|^2}{\| v \|_{\rho_n}^2}.
\end{equation}
We still need to determine the supremum term in equation \eqref{esti-an-a*}. With the aid of Lemma \ref{lem-Vn}, it is possible to rewrite it in an equivalent form as follows:
$$
\begin{aligned}
  \sup_{v \in X} \frac{(e, Sv)_n^2}{\| v \|_{\rho_n}^2}
  \leq& \sup_{v \in X} \frac{(e, Sv)_n^2}{\| Sv \|_n^2 + \rho_n \min_{u \in X, Su(\mathbf{x}) = Sv(\mathbf{x})} \|u\|_X^2 }
  \\
  =& \sup_{v \in X} \frac{(e, Sv)_n^2}{\| Sv \|_n^2 + \rho_n \min_{u \in V_n, Su(\mathbf{x}) = Sv(\mathbf{x})} \|u\|_X^2 }
=  \sup_{v \in V_n} \frac{(e, Sv)_n^2}{\rho_n  \|v\|_X^2 + \| Sv \|_n^2}.
\end{aligned}
$$
Now in view of Lemma \ref{lem-eigen-Sn}, we can choose the eigensystem $\{\mu_k^{(n)}, \psi_k\}_{k=1}^n$ of the problem  \eqref{eigen-Vn}. Moreover, the eigenvalue and the corresponding eigenfunctions are such that $0<\mu_1^{(n)} \leq \mu_2^{(n)} \leq \cdots \leq \mu_n^{(n)}<\infty$ and $\{\psi_k\}_{k=1}^n$ form an orthonormal basis of $V_n\subset X$ under the inner product $(S \cdot, S \cdot)_n$, that is,
\begin{equation}\label{eq-SkSl}
(S\psi_k, S\psi_l)_n = \delta_{kl}, \quad k, l = 1, 2, \ldots, n.
\end{equation}
Consequently, by noting $\psi_k$ is the eigenfunction of the problem \eqref{eigen-Vn}, we see that
\begin{equation}
    \label{eq-pkpl}
(\psi_k, \psi_l)_X=\mu_k^{(n)} (S\psi_k,S\psi_l)_n = \mu_k^{(n)} \delta_{kl}, \quad k, l = 1, 2, \ldots, n.
\end{equation}
Now we conclude from \eqref{eq-SkSl} that $v\in V_n$ can be further expressed as
\begin{equation}\label{eq-v}
v(x) = \sum_{k=1}^n (Sv, S\psi_k)_n\, \psi_k(x),  \quad k = 1, 2, \ldots, n.
\end{equation}
By denoting $v_k := (Sv, S\psi_k)_n$, and noting the equation \eqref{eq-pkpl}, we see that
\begin{equation}\label{eq-|v|}
\|v\|_{\rho_n}^2 = \sum_{k=1}^n (\rho_n \mu_k^{(n)} + 1) v_k^2.
\end{equation}
From the representation \eqref{eq-v}, and noting the definition of $\psi_k$, we conclude from the Cauchy-Schwarz inequality that
\begin{align*}
(e, Sv)_n^2 &= \frac{1}{n^2}  \left[\sum_{i=1}^n e_i \left( \sum_{k=1}^n v_k S\psi_k(x_i) \right) \right]^2 \\
&= \frac{1}{n^2}  \left[\sum_{k=1}^n v_k \left( \sum_{i=1}^n e_i \delta_{k,i} \right) \right]^2
= \frac{1}{n^2}  \left[\sum_{k=1}^n v_k  e_k\right]^2 \\
&\leq \frac{1}{n^2} \sum_{k=1}^n (1 + \rho_n \mu_k^{(n)}) v_k^2 \cdot \sum_{k=1}^n (1 + \rho_n \mu_k^{(n)})^{-1}  e_k^2,
\end{align*}
from which we further use the equation  \eqref{eq-|v|} to derive that
\[
\begin{aligned}
\mathbb{E} \left[ \sup_{v \in V_n} \frac{(e, Sv)_n^2}{\|v\|_{\rho_n}^2} \right] \leq&\frac{1}{n^2} \sum_{k=1}^n (1 + \rho_n \mu_k^{(n)})^{-1} \mathbb{E} \left[\sum_{k=1}^n e_k^2 \right]
\leq \frac{\sigma^2} n \sum_{k=1}^n (1 + \rho_n \mu_k^{(n)})^{-1},
\end{aligned}
\]
where in the derivation of the last estimate we used the fact that $\mathbb{E}[e_i e_j] = \delta_{ij}$. Moreover, by Lemma \ref{lem-eigen-Sn}, we readily derive
\[
\begin{aligned}
\mathbb{E} \left[ \sup_{v \in X} \frac{(e, Sv)_n^2}{\|v\|_{\rho_n}^2} \right]
\leq \frac{C \sigma^2} n \sum_{k=1}^n (1 + \rho_n k^{\frac{4(1+\gamma)}{d}})^{-1}
\leq \frac{C \sigma^2} n \int_1^\infty (1 + \rho_n t^{\frac{4(1+\gamma)}{d}})^{-1} \, dt.
\end{aligned}
\]
Setting $\beta:={\frac{4(1+\gamma)}{d}}$ and noting that $\gamma\ge0$, then it is easy to see that
\[
\int_1^\infty (1 + \rho_n t^{\beta})^{-1} \, dt = \rho_n^{-1/\beta} \int_{\rho_n^{1/\beta}}^\infty (1 + s^\beta)^{-1} \, ds \leq C \rho_n^{-1/\beta}.
\]
This completes the proof of the lemma by using \eqref{esti-an-a*}.
\end{proof}
As an application, we can derive a convergence estimate for the regularized solution $ a_n^* $ to approximate the unknown initial value $ a^* $.
\begin{theorem}\label{thm-E-condi}
Under the same assumptions in Lemma \ref{lem-eigen-H1}, we further suppose that the unknown initial value $a^*\in X:=D((-\Delta)^\beta)$ with $\beta>0$, then there exists a constant $C=C(\beta,\rho_n,T,\Omega)$ such that
\[
\begin{aligned}
\mathbb{E} \bigl[ \| a_n^* - a^* \|^{2+\frac{2}{\beta}}_{L^2(\Omega)} \bigr]
\le  C\left( \| a^* \|^2_{X} + \frac{\sigma^2}{n \rho_n^{1 + \frac{d}4/(1+\beta)}}\right)^{\frac2\beta} \left( \frac1{n^{4(1+\beta)/d}} +\rho_n \right).
\end{aligned}
\]
\end{theorem}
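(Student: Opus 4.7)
My plan is to derive a pathwise inequality for $\|a_n^* - a^*\|_{L^2(\Omega)}^{2+2/\beta}$ that involves only quantities whose expectations are already controlled by Lemma \ref{lem-Ean-Ea*}, and then take expectation at the end. The backbone is the conditional stability estimate in Theorem \ref{thm-condi-sup} applied to the error. Specifically, I apply that theorem with $\gamma = 0$ and $M = \|a_n^* - a^*\|_X$, which is legitimate because $a_n^*, a^* \in X = D((-\Delta)^\beta)$, obtaining
\[
\|a_n^* - a^*\|_{L^2(\Omega)} \le C_T \|a_n^* - a^*\|_X^{\frac{1}{1+\beta}} \|S(a_n^* - a^*)\|_{L^2(\Omega)}^{\frac{\beta}{1+\beta}}.
\]
Raising both sides to the power $2 + \frac{2}{\beta} = \frac{2(1+\beta)}{\beta}$ clears the fractional exponents and yields the clean pointwise bound
\[
\|a_n^* - a^*\|_{L^2(\Omega)}^{2+2/\beta} \le C \|a_n^* - a^*\|_X^{2/\beta} \|S(a_n^* - a^*)\|_{L^2(\Omega)}^{2}.
\]

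Next, I convert the continuous $L^2$-norm on the right-hand side to the discrete semi-norm via Lemma \ref{lem-u-un}. Since the Fourier coefficients of $Sv$ are $TE_{\alpha,2}(-\lambda_k T^\alpha)$ times those of $v$, the two-sided bounds in Corollary \ref{coro-<ml<} imply that $S$ maps $X = D((-\Delta)^\beta)$ continuously into $D((-\Delta)^{1+\beta}) \hookrightarrow H^{2(1+\beta)}(\Omega)$. Choosing $k = 2(1+\beta)$ in Lemma \ref{lem-u-un} (which satisfies $k > d/2$ since $\beta > 0$ and $d \le 3$), I obtain
\[
\|S(a_n^* - a^*)\|_{L^2(\Omega)}^2 \le C\|S(a_n^* - a^*)\|_n^2 + C n^{-4(1+\beta)/d} \|a_n^* - a^*\|_X^2.
\]
Substituting this into the previous display gives
\[
\|a_n^* - a^*\|_{L^2(\Omega)}^{2+2/\beta} \le C\|a_n^* - a^*\|_X^{2/\beta}\|S(a_n^* - a^*)\|_n^2 + C n^{-4(1+\beta)/d}\|a_n^* - a^*\|_X^{2+2/\beta}.
\]

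To take expectations, I revisit the energy inequality driving Lemma \ref{lem-Ean-Ea*}. Setting $R := \sup_{v\in X} (e, Sv)_n^2 / \|v\|_{\rho_n}^2$ and $E_R := \|a^*\|_X^2 + R/\rho_n$, the proof of that lemma establishes the pathwise bound $\rho_n \|a_n^* - a^*\|_X^2 + \|S(a_n^*-a^*)\|_n^2 \le C(\rho_n\|a^*\|_X^2 + R)$, from which $\|a_n^*-a^*\|_X^2 \le CE_R$ and $\|S(a_n^*-a^*)\|_n^2 \le C\rho_n E_R$. Inserting into the pointwise inequality collapses the two terms into a single product:
\[
\|a_n^* - a^*\|_{L^2(\Omega)}^{2+2/\beta} \le C\bigl(\rho_n + n^{-4(1+\beta)/d}\bigr) E_R^{(1+\beta)/\beta}.
\]
Sub-additivity $(a+b)^p \le 2^{p-1}(a^p + b^p)$ with $p = (1+\beta)/\beta \ge 1$, combined with the moment bound $\mathbb{E}[R] \le C\sigma^2 n^{-1}\rho_n^{-d/(4(1+\beta))}$ from Lemma \ref{lem-Ean-Ea*} and a Jensen-type bound to absorb any remaining power of $E_R$ against $\|a^*\|_X^2 + \sigma^2/(n\rho_n^{1+d/(4(1+\beta))})$, delivers the claimed estimate.

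The step I expect to be the main obstacle is the last one: controlling $\mathbb{E}[E_R^{(1+\beta)/\beta}]$ against the target expression $(\|a^*\|_X^2 + \sigma^2/(n\rho_n^{1+d/(4(1+\beta))}))^{2/\beta}$. Since Lemma \ref{lem-Ean-Ea*} directly controls only the first moment of $R$, one either needs an extension of the spectral-supremum argument (at the level of the representation $R \le n^{-2}\sum_k (1+\rho_n\mu_k^{(n)})^{-1} e_k^2$) to the $(1+\beta)/\beta$-th moment via Minkowski's inequality, or a subtle combination with the Tikhonov-minimizer identity $\rho_n\|a_n^*\|_X^2 \le \|e\|_n^2 + \rho_n\|a^*\|_X^2$ to avoid higher-moment assumptions on the noise. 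Resolving this interplay between the correlated random factors $\|a_n^*-a^*\|_X^{2/\beta}$ and $\|S(a_n^*-a^*)\|_n^2$, without losing the exponent matching on the right-hand side, is the technical heart of the proof.
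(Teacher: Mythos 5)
Your skeleton tracks the paper's proof: the backbone is exactly the conditional stability of Theorem \ref{thm-condi-sup} with $\gamma=0$ applied to the error $a_n^*-a^*$ with the random bound $M=\|a_n^*-a^*\|_X$, followed by the discrete-versus-continuous norm comparison of Lemma \ref{lem-u-un} and the energy/variational estimates behind Lemma \ref{lem-Ean-Ea*}. Your one real deviation in the deterministic part is harmless and even slightly cleaner: you invoke Lemma \ref{lem-u-un} with $k=2(1+\beta)$ together with the smoothing bound $\|Sv\|_{D((-\Delta)^{1+\beta})}\le C\|v\|_{D((-\Delta)^{\beta})}$, which produces the factor $n^{-4(1+\beta)/d}$ directly, whereas the paper uses $k=2$, the forward estimate of Lemma \ref{lem-forward}, and a Young-inequality absorption to reach the same factor.

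The genuine gap is the final step, which you acknowledge but do not close, and which as proposed cannot be closed under the paper's hypotheses. Your pathwise reduction leaves you needing $\mathbb{E}\bigl[E_R^{(1+\beta)/\beta}\bigr]$ with $E_R=\|a^*\|_X^2+R/\rho_n$, but Lemma \ref{lem-Ean-Ea*} (and its proof) controls only $\mathbb{E}[R]$; pushing the spectral representation $R\le n^{-2}\sum_{k}(1+\rho_n\mu_k^{(n)})^{-1}e_k^2$ to the $(1+\beta)/\beta$-th moment via Minkowski would require $\mathbb{E}\bigl[|e_i|^{2(1+\beta)/\beta}\bigr]<\infty$, while the standing assumption is only $\mathbb{E}[e_i^2]\le\sigma^2$. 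Moreover, even granting such moments, your bound carries the exponent $(1+\beta)/\beta=1+\frac1\beta$ on the bracket, which coincides with the claimed $\frac2\beta$ only at $\beta=1$, so the stated estimate would not follow in general. The paper avoids higher noise moments altogether: it keeps $M_n:=\|(-\Delta)^\beta(a_n^*-a^*)\|_{L^2(\Omega)}$ as a separate factor, takes expectations so that only $\mathbb{E}\bigl[M_n^{2/\beta}\bigr]$, $\mathbb{E}\bigl[\|a_n^*-a^*\|_{L^2(\Omega)}^2\bigr]$ and the first estimate of Lemma \ref{lem-Ean-Ea*} enter, absorbs the term $\varepsilon\,\mathbb{E}\bigl[\|a_n^*-a^*\|^{2+2/\beta}_{L^2(\Omega)}\bigr]$ by choosing $\varepsilon\sim\bigl(\mathbb{E}\bigl[M_n^{2/\beta}\bigr]\bigr)^{-1}$ in Young's inequality, and finally bounds $\mathbb{E}\bigl[M_n^{2/\beta}\bigr]$ by the second estimate of Lemma \ref{lem-Ean-Ea*} raised to the power $\frac1\beta$. (That route has its own delicate points — it factorizes expectations of correlated quantities — but it is the mechanism by which the stated exponents are produced.) To complete your argument you should switch to this expectation-level bookkeeping rather than seek higher-moment bounds on $R$, which the noise model does not support.
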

\begin{proof}
In view of the estimate \eqref{esti-u<un}, we see that
\[
 \| S a_n^* - S a^* \|^2_n  + n^{-\frac4d}  \| S a_n^* - S a^* \|^2_{H^2(\Omega)}  \geq C  \| S a_n^* - S a^* \|^2_{L^2(\Omega)}.
\]
Moreover, from the second estimate in Lemma \ref{lem-Ean-Ea*} with $X=D((-\Delta)^{\beta})$, combined with Theorem \ref{thm-condi-sup} with $\gamma=0$ further implies that
\[
 \| S a_n^* - S a^* \|^2_n  + n^{-\frac4d} \| S a_n^* - S a^* \|^2_{H^2(\Omega)}  \geq C  M_n^{-\frac{2}{\beta}}\| a_n^* - a^* \|^{2+\frac{2}{\beta}}_{L^2(\Omega)} .
\]
Here $M_n:= \|(-\Delta)^\beta (a_n^* - a^*)\|_{L^2(\Omega)}$. On the other hand, Lemma \ref{lem-forward} implies that
\[
 \| S a_n^* - S a^* \|^2_{H^2(\Omega)} \leq C \|a_n^* - a^* \|^2_{L^2(\Omega)} .
\]
Collecting all the above estimates and using the first inequality in Lemma \ref{lem-Ean-Ea*} with $\gamma=\beta$, we see that
\[
\mathbb{E} \bigl[ \| a_n^* - a^* \|^{2+\frac{2}{\beta}}_{L^2(\Omega)} \bigr]\le C\mathbb{E} \bigl[M_n^{\frac2\beta} \bigr] \left[n^{-\frac4d}\mathbb{E} \bigl[\|a_n^* - a^* \|^2_{L^2(\Omega)} \bigr] +  \rho_n \| a^* \|^2_{X} +  \frac{\sigma^2}{n \rho_n^{\frac{d}{4}/(1+\beta)}} \right].
\]
Moreover, for $\varepsilon>0$, by the use of the Young inequality $|\xi\zeta| \le \frac{C}{\varepsilon} |\xi|^p + \varepsilon |\zeta|^q$ with $p=\beta+1$ and $q=\frac{\beta+1}{\beta}$, we obtain
\[
 n^{-\frac4d}\|a_n - a^* \|^2_{L^2(\Omega)}  \le \frac{C}{\varepsilon}n^{-\frac{4(1+\beta)}d} + \varepsilon \|a_n - a^* \|^{2+\frac2\beta}_{L^2(\Omega)}.
\]
Consequently, we see that
\[
\begin{aligned}
&\mathbb{E} \bigl[ \| a_n^* - a^* \|^{2+\frac{2}{\beta}}_{L^2(\Omega)} \bigr] \\
\le&  C\mathbb{E} \bigl[M_n^{\frac2\beta} \bigr] \left[ \frac{1}{\varepsilon}n^{-\frac{4(1+\beta)}d}  + \varepsilon \mathbb{E} \bigl[\|a_n^* - a^* \|^{2+\frac2\beta}_{L^2(\Omega)} \bigr] +  \rho_n \| a^* \|^2_{X} +  \frac{\sigma^2}{n \rho_n^{\frac{d}{4}/(1+\beta)}} \right].
\end{aligned}
\]
By letting $\varepsilon=\frac1{2C\mathbb{E} \bigl[M_n^{\frac2\beta} \bigr]}$, we can see that the term  $\varepsilon\mathbb{E} \bigl[\|a_n^* - a^* \|^{2+\frac2\beta}_{L^2(\Omega)} \bigr]$ on the right hand side of the above inequality can be absorbed, and then we get
\[
\begin{aligned}
\mathbb{E} \bigl[ \| a_n^* - a^* \|^{2+\frac{2}{\beta}}_{L^2(\Omega)} \bigr]
\le  C\mathbb{E} \bigl[M_n^{\frac2\beta} \bigr] \left[ 2C\mathbb{E} \bigl[M_n^{\frac2\beta} \bigr] n^{-\frac{4(1+\beta)}d}  +  \rho_n \| a^* \|^2_{X} +  \frac{\sigma^2}{n \rho_n^{\frac{d}{4}/(1+\beta)}} \right].
\end{aligned}
\]
Now by using the second estimate in Lemma \ref{lem-Ean-Ea*} with $X=D((-\Delta)^\beta)$, and noting the definition $M_n:=\|(-\Delta)^\beta (a_n^* - a^*)\|_{L^2(\Omega)}$, it follows that
$$
\mathbb{E} \bigl[M_n^{\frac2\beta} \bigr] \le C\left[ \| a^* \|^2_{X} + \frac{\sigma^2}{n \rho_n^{1 + \frac{d}4/(1+\beta)}}\right]^{\frac1\beta},
$$
from which we further know that
\[
\begin{aligned}
\mathbb{E} \bigl[ \| a_n^* - a^* \|^{2+\frac{2}{\beta}}_{L^2(\Omega)} \bigr]
\le&  C\mathbb{E} \bigl[M^{\frac2\beta} \bigr] \left[ \frac{\mathbb{E} \bigl[M^{\frac2\beta} \bigr]}{n^{4(1+\beta)/d}}  +  \rho_n \| a^* \|^2_{X} +  \frac{\sigma^2}{n \rho_n^{\frac{d}{4}/(1+\beta)}} \right]\\
\le & C\left[ \| a^* \|^2_{X} + \frac{\sigma^2}{n \rho_n^{1 + \frac{d}4/(1+\beta)}}\right]^{\frac2\beta} \frac1{n^{4(1+\beta)/d}} \\
&+ C\rho_n\left[ \| a^* \|^2_{X} + \frac{\sigma^2}{n \rho_n^{1 + \frac{d}4/(1+\beta)}}\right]^{\frac1\beta+1}.
\end{aligned}
\]
We can complete the proof of the theorem.
\end{proof}
Such results are important in practical applications because they provide a method for the selection of key parameters.
\begin{remark}\label{remark-1}
Lemma \ref{lem-Ean-Ea*} and Theorem \ref{thm-E-condi} indicate that the optimal regularization parameter has the form: $\rho_n^{\frac12 + \frac{d}8/(1+\beta)} = O(\sigma n^{-\frac12} \|a^*\|^{-1}_{X})$ for $\beta\in(0,1]$, and the optimal error will be
\[
\mathbb{E} \bigl[ \| a_n^* - a^* \|^{2+\frac2\beta}_{L^2(\Omega)} \bigr]\le C \left(\rho_n+ \frac1{n^{4(1+\beta)/d}} \right) \| a^* \|^{\frac4\beta}_{X}.
\]
\end{remark}
\begin{theorem}\label{thm-E'-condi}
Under the same assumption in Lemma \ref{lem-eigen-H1} and Theorem \ref{thm-condi-sup},   we further suppose that the unknown initial value $a^*\in X :=L^2(\Omega)$, then there exists a constant $C=C(\rho_n,T,\Omega)$ such that
\[
\begin{aligned}
\mathbb{E} \bigl[ \| a_n^* - a^* \|^4_{H^{-1}(\Omega)} \bigr]
\le   C \left( n^{-\frac4d} + \rho_n \right) \left(\| a^* \|^2_{L^2(\Omega)} +  \frac{\sigma^2}{n \rho_n^{1+\frac{d}4}}\right)^2.
\end{aligned}
\]
\end{theorem}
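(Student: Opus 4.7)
The plan is to mirror the proof strategy of Theorem \ref{thm-E-condi}, adapted to the endpoint case $\beta = 0$ (no extra prior regularity on $a^*$) and targeting the weaker $H^{-1}$-norm on the left hand side. Since the norms on the left and right sides of the conclusion differ, no absorption trick will be needed — the three ingredients (conditional stability, the Utreras-type discretization bound, and the second-moment estimates of Lemma \ref{lem-Ean-Ea*}) should chain together directly. First, I would apply Theorem \ref{thm-condi-sup} to $a_n^* - a^* \in L^2(\Omega)$ with $\beta = 0$ and $\gamma = 1/2$ (so that $\beta + \gamma = 1/2 \neq 0$), taking $M = \|a_n^* - a^*\|_{L^2(\Omega)}$ as the natural a priori bound. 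This gives, pointwise in the noise realization,
\[
\|a_n^* - a^*\|_{H^{-1}(\Omega)}^2 \le C_T \|a_n^* - a^*\|_{L^2(\Omega)} \|Sa_n^* - Sa^*\|_{L^2(\Omega)},
\]
and squaring yields the required fourth-power analogue.

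Next, I would pass from the continuous to the discrete forward residual using Lemma \ref{lem-u-un} with $k = 2 > d/2$, combined with the smoothing estimate $\|Sv\|_{H^2(\Omega)} \le C\|v\|_{L^2(\Omega)}$ of Lemma \ref{lem-forward}, to obtain
\[
\|Sa_n^* - Sa^*\|_{L^2(\Omega)}^2 \le C\|Sa_n^* - Sa^*\|_n^2 + C n^{-4/d}\|a_n^* - a^*\|_{L^2(\Omega)}^2.
\]
Substituting this into the fourth-power inequality from the previous step produces the deterministic bound
\[
\|a_n^* - a^*\|_{H^{-1}(\Omega)}^4 \le C\|a_n^* - a^*\|_{L^2(\Omega)}^2 \|Sa_n^* - Sa^*\|_n^2 + C n^{-4/d}\|a_n^* - a^*\|_{L^2(\Omega)}^4.
\]

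Finally, I would take expectation and apply Lemma \ref{lem-Ean-Ea*} at $X = L^2(\Omega)$ (i.e.\ $\gamma = 0$), which delivers $\mathbb{E}[\|a_n^* - a^*\|_{L^2(\Omega)}^2] \le C \mathcal{E}$ and $\mathbb{E}[\|Sa_n^* - Sa^*\|_n^2] \le C \rho_n \mathcal{E}$, where $\mathcal{E} := \|a^*\|_{L^2(\Omega)}^2 + \sigma^2/(n\rho_n^{1+d/4})$; crucially, $\rho_n$ separates cleanly from the bracket in the first estimate, which is what ultimately produces the $\rho_n$ factor in the target bound. The main obstacle will be the reduction of the two random products on the right hand side — namely $\mathbb{E}[\|a_n^* - a^*\|_{L^2(\Omega)}^2 \|Sa_n^* - Sa^*\|_n^2]$ and $\mathbb{E}[\|a_n^* - a^*\|_{L^2(\Omega)}^4]$ — to products of the second-moment bounds available from Lemma \ref{lem-Ean-Ea*}, given that only $\mathbb{E}[e_i^2]\le\sigma^2$ is assumed on the noise. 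Following the style of Theorem \ref{thm-E-condi}, I would handle this via a Cauchy–Schwarz or Young-type decoupling combined with a pathwise control of $\|a_n^*\|_{L^2(\Omega)}$ obtained from the Tikhonov minimizing property \eqref{Tik-vec}, so that the mixed expectations are controlled by $\mathcal{E}^2$ up to universal constants, assembling into the target bound $C(n^{-4/d} + \rho_n)\mathcal{E}^2$.
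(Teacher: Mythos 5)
Your deterministic skeleton coincides with the paper's: Theorem \ref{thm-condi-sup} with $\beta=0$, $\gamma=\tfrac12$ applied to $a_n^*-a^*$, the discrete-to-continuous bound \eqref{esti-u<un} with $k=2$ together with the smoothing estimate of Lemma \ref{lem-forward}, and then the two estimates of Lemma \ref{lem-Ean-Ea*} with $X=L^2(\Omega)$; your observation that the first estimate of Lemma \ref{lem-Ean-Ea*} equals $\rho_n$ times the bracket $\mathcal{E}$ is also exactly how the factor $\rho_n\mathcal{E}^2$ arises in the paper. The gap is in the step you yourself flag as ``the main obstacle'': passing from the pathwise inequality $\|a_n^*-a^*\|_{H^{-1}(\Omega)}^4\le C\|a_n^*-a^*\|_{L^2(\Omega)}^2\|S a_n^*-Sa^*\|_n^2+Cn^{-4/d}\|a_n^*-a^*\|_{L^2(\Omega)}^4$ to an expectation bound. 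Your proposed fixes do not close it under the stated hypotheses: a Cauchy--Schwarz decoupling of $\mathbb{E}\bigl[\|a_n^*-a^*\|_{L^2}^2\|Sa_n^*-Sa^*\|_n^2\bigr]$, or a direct treatment of $\mathbb{E}\bigl[\|a_n^*-a^*\|_{L^2}^4\bigr]$, requires fourth moments of the error, and the pathwise Tikhonov bound $\rho_n\|a_n^*\|_X^2\le\|\mathbf{e}\|_n^2+\rho_n\|a^*\|_X^2$ only trades this for $\mathbb{E}\bigl[\|\mathbf{e}\|_n^4\bigr]$ --- all of which need $\mathbb{E}[e_i^4]$, whereas only $\mathbb{E}[e_i^2]\le\sigma^2$ is assumed, and Lemma \ref{lem-Ean-Ea*} supplies second moments only.

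The paper's device, which is the ingredient missing from your plan, is to never let the factor $\|a_n^*-a^*\|_{L^2(\Omega)}^2$ stay random: it introduces a \emph{deterministic} constant $M_n$ with $M_n\ge C\,\mathbb{E}\bigl[\|a_n^*-a^*\|_{L^2(\Omega)}^2\bigr]$, chosen as $M_n=C\bigl(\|a^*\|_{L^2(\Omega)}^2+\sigma^2/(n\rho_n^{1+\frac d4})\bigr)$ from the second estimate of Lemma \ref{lem-Ean-Ea*} with $\gamma=0$, and applies the conditional stability with this $M_n$ in the role of the a priori bound. Then only $\mathbb{E}\bigl[\|Sa_n^*-Sa^*\|_n^2\bigr]$ and $\mathbb{E}\bigl[\|a_n^*-a^*\|_{L^2}^2\bigr]$ appear, both controlled by Lemma \ref{lem-Ean-Ea*}, and the bound $CM_n\bigl(n^{-4/d}M_n+\rho_n\mathcal{E}\bigr)=C(n^{-4/d}+\rho_n)\mathcal{E}^2$ follows with no absorption and no higher noise moments. (Admittedly the paper itself is terse about replacing the random factor by its expectation bound, but that substitution of a deterministic $M_n$ is precisely the step your proposal needs and currently lacks.) As written, your proof is incomplete at this decoupling step, and the specific repairs you suggest would fail under the second-moment-only noise assumption.
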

\begin{proof}
In view of estimate \eqref{esti-u<un}, we see that
\[
\mathbb{E} \bigl[ \| S a_n^* - S a^* \|^2_n \bigr] + n^{-\frac4d} \mathbb{E} \bigl[ \| S a_n^* - S a^* \|^2_{H^2(\Omega)} \bigr] \geq C \mathbb{E} \bigl[ \| S a_n - S a^* \|^2_{L^2(\Omega)} \bigr],
\]
which combined with Theorem \ref{thm-condi-sup} with $\beta=0,\gamma=\frac12$ further implies that
\[
\mathbb{E} \bigl[ \| S a_n^* - S a^* \|^2_n \bigr] + n^{-\frac4d}\mathbb{E} \bigl[ \| S a_n^* - S a^* \|^2_{H^2(\Omega)} \bigr] \geq C M_n^{-1} \mathbb{E} \bigl[ \| a_n^* - a^* \|^4_{H^{-1}(\Omega)} \bigr].
\]
Here $M_n\ge C\mathbb{E} \bigl[ \|a_n^* - a^*\|_{L^2(\Omega)}^2\bigr]$ will be chosen later. On the other hand, by Lemma \ref{lem-forward}, it follows that
\[
\mathbb{E} \bigl[ \| S a_n^* - S a^* \|^2_{H^2(\Omega)} \bigr] \leq C \mathbb{E} \bigl[\|a_n^* - a^* \|^2_{L^2(\Omega)} \bigr].
\]
Collecting all the above estimates and using the first estimate in Lemma \ref{lem-Ean-Ea*} with $X=L^2(\Omega)$, we arrive at the inequalities
\[
\begin{aligned}
\mathbb{E} \bigl[ \| a_n^* - a^* \|^4_{H^{-1}(\Omega)} \bigr]
\le& C M_n \left(n^{-\frac4d}M_n +  \rho_n \| a^* \|^2_{L^2(\Omega)} +  \frac{\sigma^2}{n \rho_n^{\frac{d}4}}\right)\\
\le & C n^{-\frac4d}M_n^2 +  CM_n\left(\rho_n \| a^* \|^2_{L^2(\Omega)} +  \frac{\sigma^2}{n \rho_n^{\frac{d}4}}\right).
\end{aligned}
\]
Now in view of the second inequality in Lemma \ref{lem-Ean-Ea*} with $\gamma=0$, we choose $M_n$ such that
$$
M_n=  C\bigg(\| a^* \|^2_{L^2(\Omega)} +  \frac{\sigma^2}{n \rho_n^{1 + \frac{d}4}}\bigg)
\ge C \mathbb{E} \bigl[ \|a_n^* - a^*\|_{L^2(\Omega)}^2\bigr],
$$
which implies that
\[
\begin{aligned}
\mathbb{E} \bigl[ \| a_n^* - a^* \|^4_{H^{-1}(\Omega)} \bigr]
\le  C \left( n^{-\frac4d} + \rho_n \right) \left(\| a^* \|^2_{L^2(\Omega)} +  \frac{\sigma^2}{n \rho_n^{1+\frac{d}4}}\right)^2.
\end{aligned}
\]
We complete the proof of the theorem.
\end{proof}

\begin{remark}
\label{remark-2}
Lemma \ref{lem-Ean-Ea*} and Theorem \ref{thm-E'-condi} indicate that the optimal regularization parameter has the form: $\rho_n^{\frac12 + \frac{d}8} = O(\sigma n^{-\frac12} \|a^*\|^{-1}_{L^2(\Omega)})$. The optimal error will be
\[
\mathbb{E} \bigl[ \| a_n^* - a^* \|^{4}_{H^{-1}(\Omega)} \bigr]\le C \left(\rho_n+ n^{-\frac4d} \right) \| a^* \|^4_{L^2(\Omega)}.
\]
\end{remark}
\begin{remark}
The regularization parameter choice (Remark \ref{remark-1}-\ref{remark-2} and error bounds are fully consistent with the conditional stability framework. Indeed, the error bounds in Theorems \ref{thm-E-condi} and \ref{thm-E'-condi} are specific instantiations of the conditional stability framework (Theorem \ref{thm-condi-sup}):
\begin{enumerate}
    \item Theorem \ref{thm-E-condi} corresponds to the case $\gamma = 0$ and $ X = D((-\Delta)^\beta) $ ($ \beta > 0 $), where the H\"older exponent $ \frac{\beta+\gamma}{\beta+1} $ reduces to $ \frac{\beta}{\beta+1} $, governing the $ L^2(\Omega) $-norm error bound.
    \item Theorem \ref{thm-E'-condi} corresponds to $ \beta = 0 $ and $ \gamma = \frac{1}{2} $, which matches the  $ H^{-1}(\Omega) $-norm error bound for $ L^2(\Omega) $-based regularization. This also reflects the weaker stability for less regular initial values ($\beta=0$).
\end{enumerate}
\end{remark}

\section{Numerical experiment}
In this section, we test numerical examples to reconstruct the initial value $a_1$ in \eqref{eq-gov}. All experiments are carried out in the space area $\Omega=(0,1)\times(0,1)$. The finite difference and finite element methods on uniform grids are used for temporal and spatial discretization, respectively. Following the idea in \cite{Sun2006ANM}, the diffusion-wave system is rewritten to the coupled equations based on a order reduction method. Specifically, the Caputo derivative is approximated by $L1$ method.  Let $A_h$: $V_h\rightarrow V_h$ be the discrete Laplacian operator, where $V_h$ is the finite element space. Denote $\|\phi\|_{L^2(\Omega)}:=(\phi,\phi)^{\frac12}$, $\|\phi\|_{H^{-1}(\Omega)}:=(A_h^{-1}\phi,\phi)^{\frac12}$, $\forall\phi \in V_h$.  The details of the $H^{-1}$ norm are provided in \cite{Hu2007ACM,Zou2019NMTMA}.

The scattered point measurements take the form
$$m_i = (Sa^*)(x_i) + e_i, ~~ i = 1,2,\cdots,n,$$
where $\{e_i\}_{i=1}^n$ are independent and identically distributed random variables in a probability space $(X, \mathscr{F}, \mathbb{P})$ and are such that $\mathbb E[e_i]=0$ and $\mathbb E[e_i^2] \le \sigma^2$. Measurements are evenly distributed within the region $\Omega$, forming a finite element mesh, and the number of mesh points takes $n=6241$. Furthermore, the relative $L^2$-norm and $H^{-1}$-norm error between the true initial function $a_1$ and the reconstructed one $a_1^{rec}$ are as follows.
$$err_{L^2}=\frac{\|a_1^{rec}-a_1\|_{L^2(\Omega)}}{\|a_1\|_{L^2(\Omega)}},~~err_{H^{-1}}=\frac{\|a_1^{rec}-a_1\|_{H^{-1}(\Omega)}}{\|a_1\|_{H^{-1}(\Omega)}}.$$
\begin{example}\label{ex-sigma=0.2}
We fix the noise parameter $\sigma=0.2$, the initial guess $a_1^0=0$, and the initial function is as follows:
\begin{align*}
a_1=8x^{1.01}(x-1)\sin(\pi y),~T=1.
\end{align*}
\end{example}

In Example \ref{ex-sigma=0.2}, the maximum norm of $Sa_1$ is $\|Sa_1\|_{L^\infty} \approx 0.95$, $1.30$ for $\alpha=1.2$, $1.8$. Thus, noise level $\frac{\sigma}{\|Sa_1\|_{L^\infty}}\approx24\%$, $15\%$ for $\alpha=1.2$, $1.8$.
Based on Remark \ref{remark-1}, the optimal regularization parameter $\rho = O\big((\sigma n^{-\frac12} \|a_1\|^{-1}_{H^1(\Omega)})^{3/2}\big)=O( 1.31\times 10^{-5})$ for the $H^1$ regularization. And we compare the true initial function $a_1$ in Figure \ref{figure5} and the reconstruction results. In Figure \ref{figure7}, we take the regularization parameter $\rho=3\times10^{-5}\approx 10^{-4.523}$, and the inversion result is good. To show the importance of the choices of $\rho$, we take $\rho=10^{-6}$ in Figure \ref{figure8} and the reconstruction is not that good.
The numerical results in Figures \ref{figure_add1} and \ref{figure_add2} indicate that the optimal regularization parameters are taken in the interval $(10^{-4},10^{-5})$. It agrees with our theoretical estimation.

\begin{figure}[!htb]
        \centering
	\begin{minipage}{0.49\linewidth}
		\centering
		\setlength{\abovecaptionskip}{0.28cm}
		\includegraphics[width=1\linewidth]{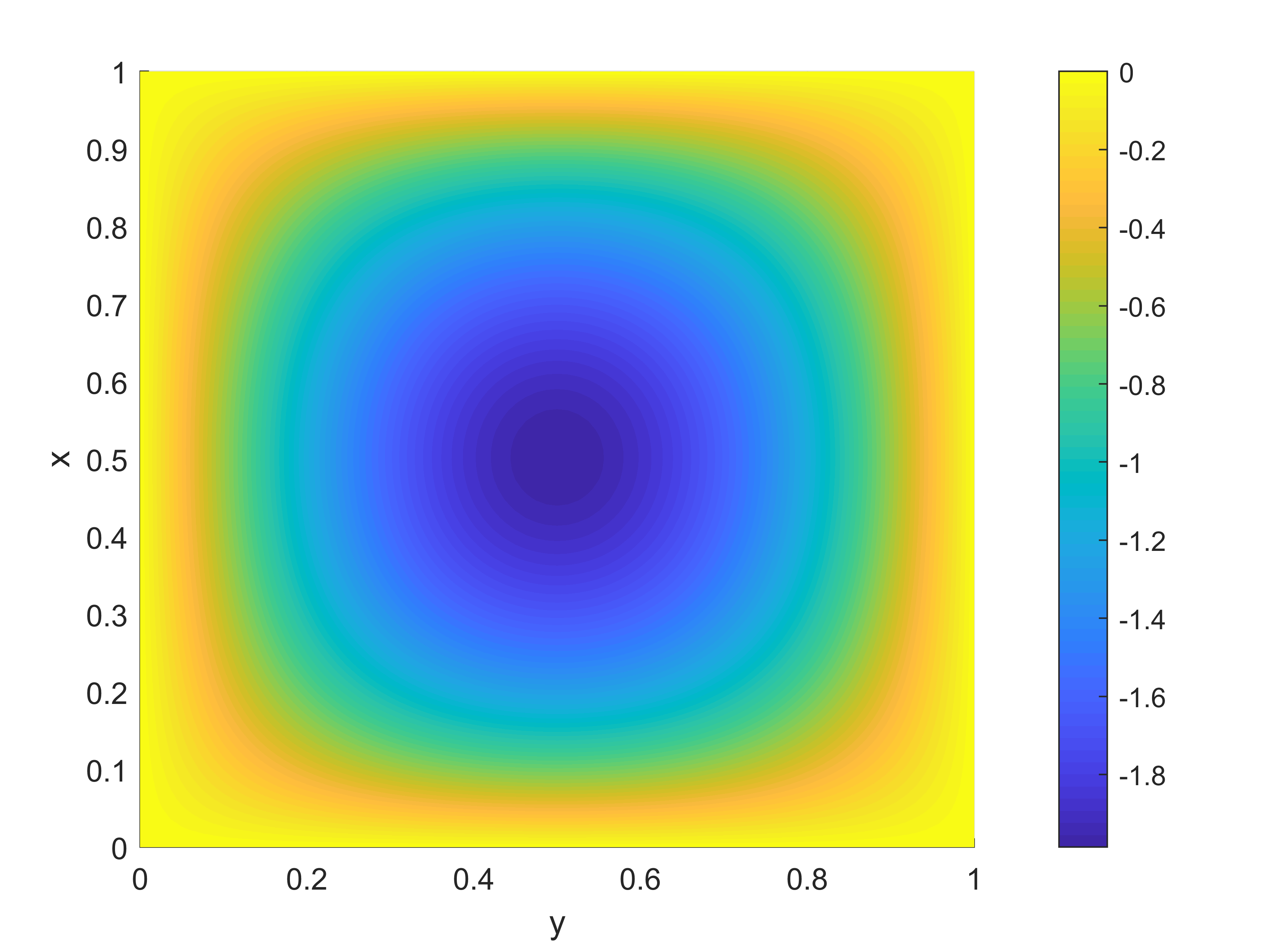}
		\caption{True $a_1$ in Ex. \ref{ex-sigma=0.2}}
		\label{figure5}
	\end{minipage}
	\begin{minipage}{0.49\linewidth}
		\centering
		\includegraphics[width=1\linewidth]{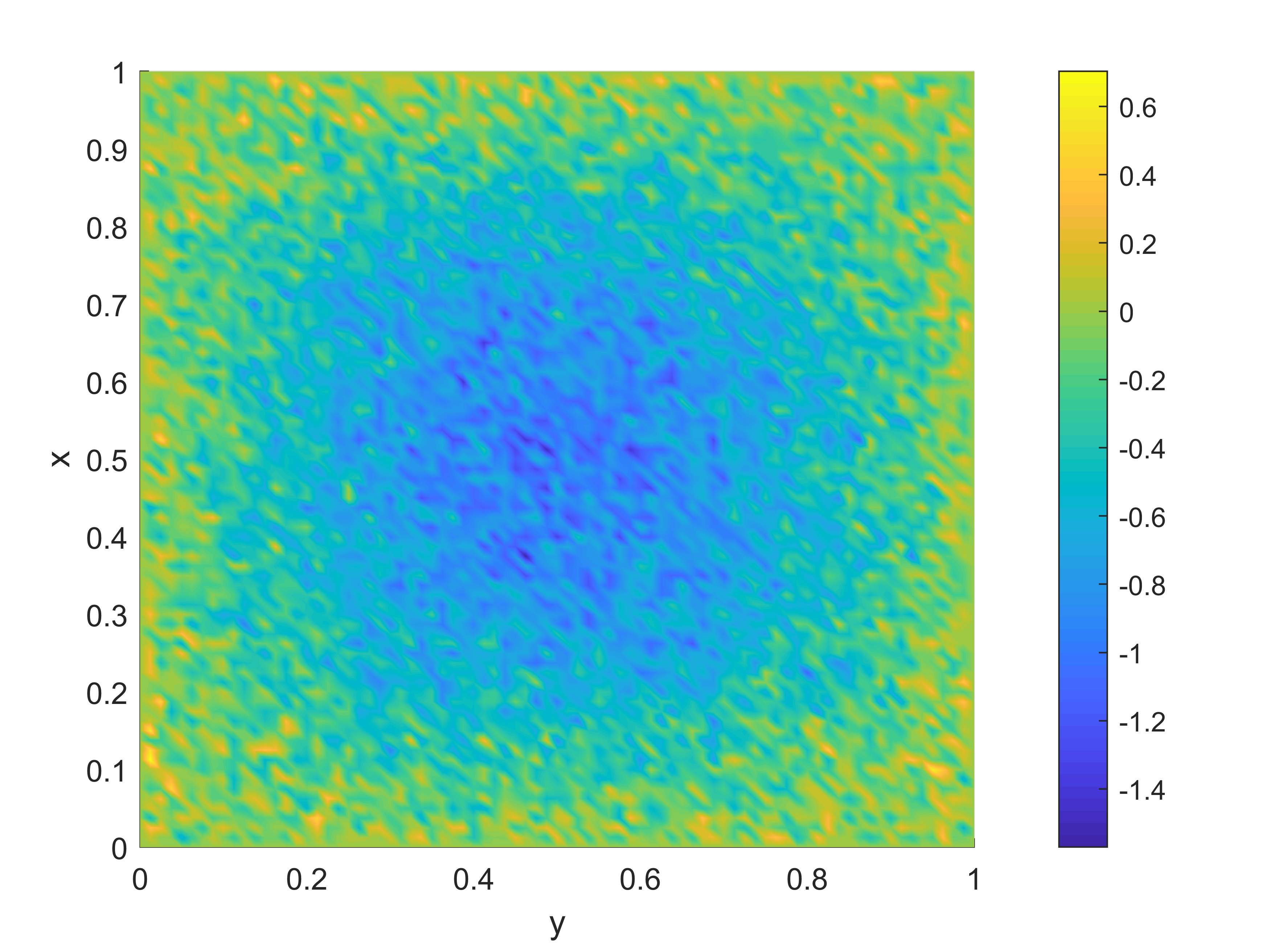}
		\caption{measurements, $\alpha=1.2$}
	\end{minipage}
        \vspace{0.3cm}
        \vfill
	\begin{minipage}{0.49\linewidth}
		\centering
		\setlength{\abovecaptionskip}{0.28cm}
		\includegraphics[width=1\linewidth]{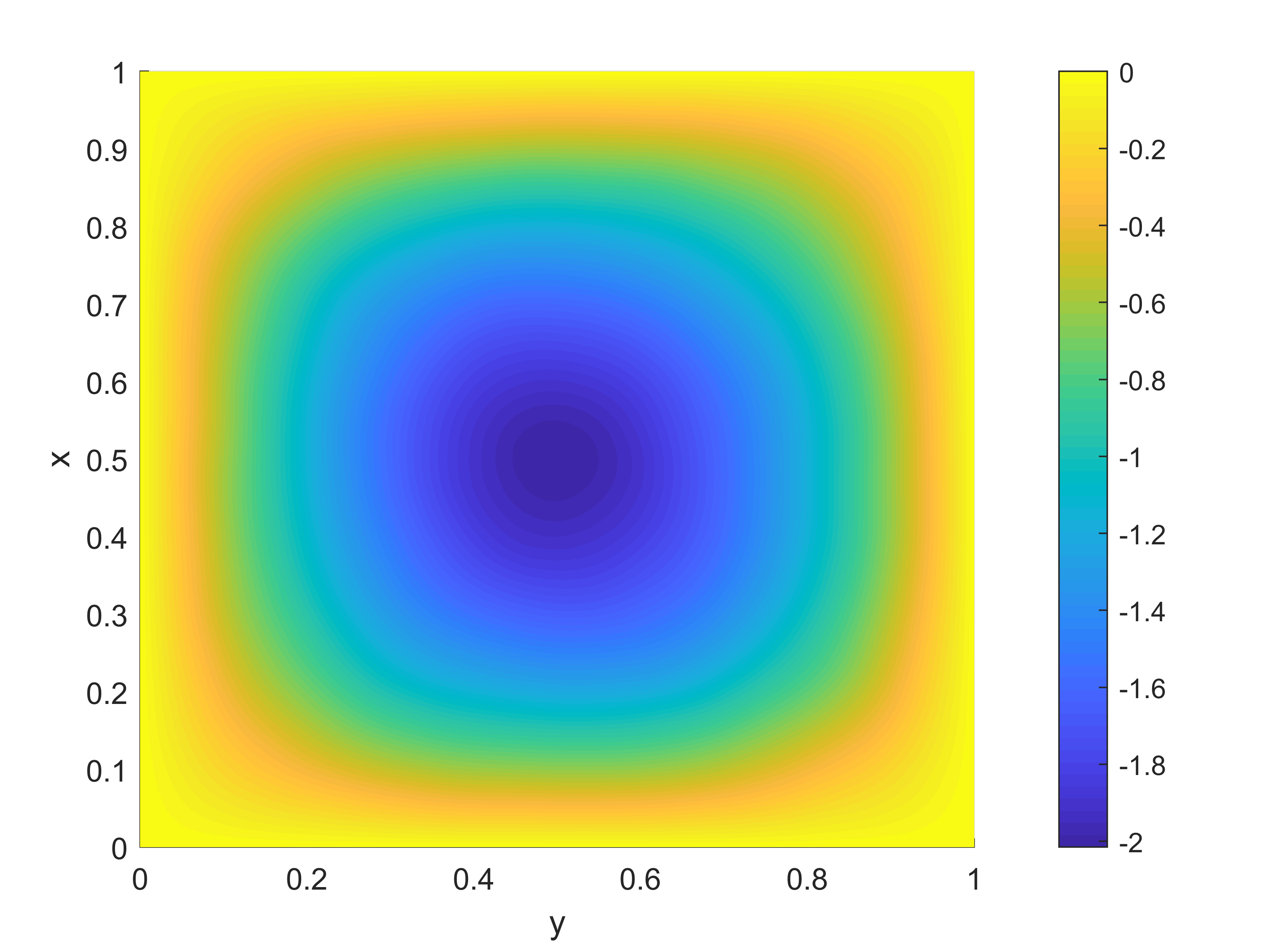}
		\caption{$a_1^{rec}$, $(\alpha,\rho)=(1.2,3\times10^{-5})$}
		\label{figure7}
	\end{minipage}
	\begin{minipage}{0.49\linewidth}
		\centering
		\includegraphics[width=1\linewidth]{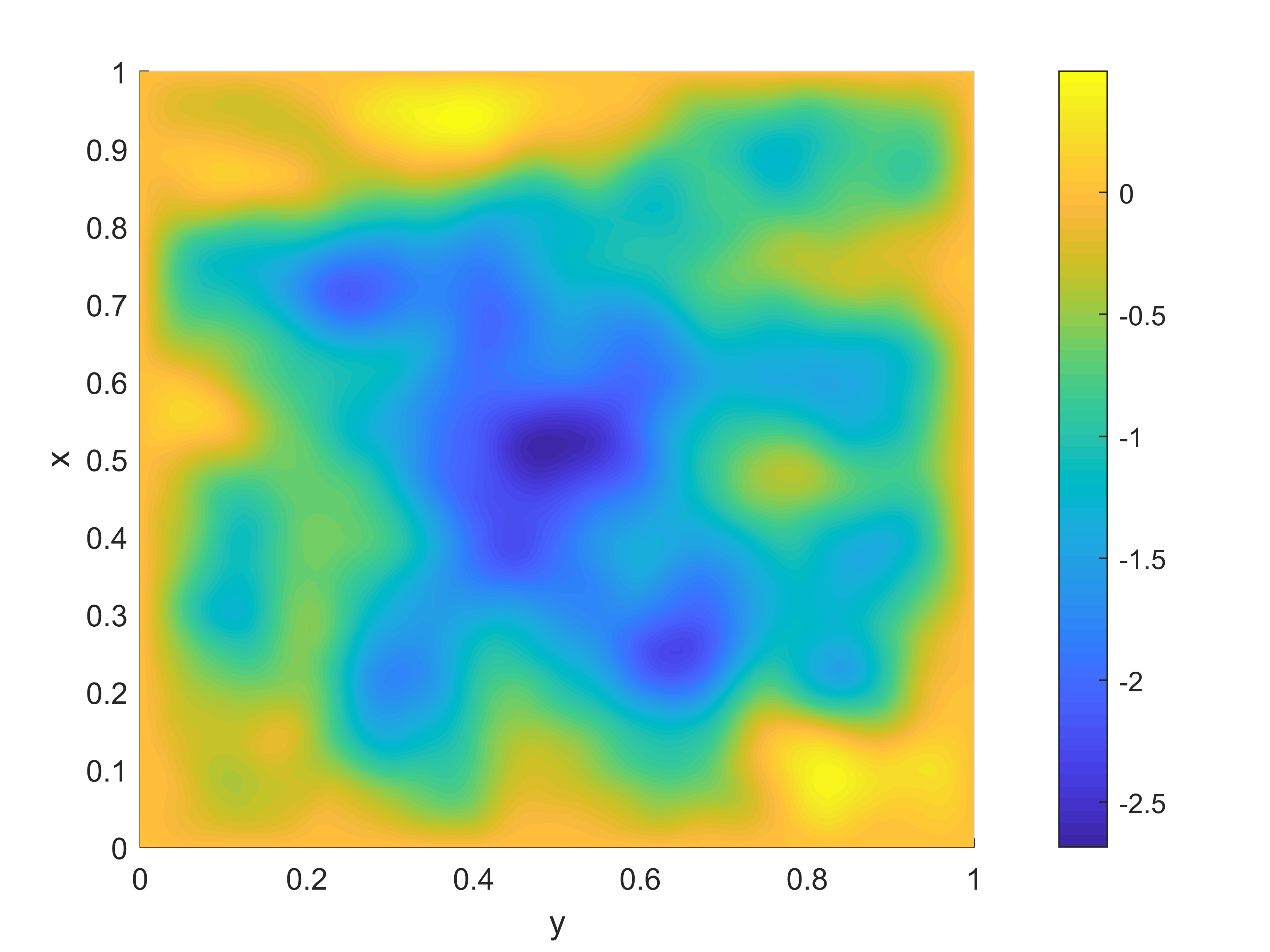}
		\caption{$a_1^{rec}$, $(\alpha,\rho)=(1.2,10^{-6})$}
		\label{figure8}
	\end{minipage}
        \vspace{0.3cm}
        \vfill
        \begin{minipage}{0.49\linewidth}
		\centering
		\setlength{\abovecaptionskip}{0.28cm}
		\includegraphics[width=1\linewidth]{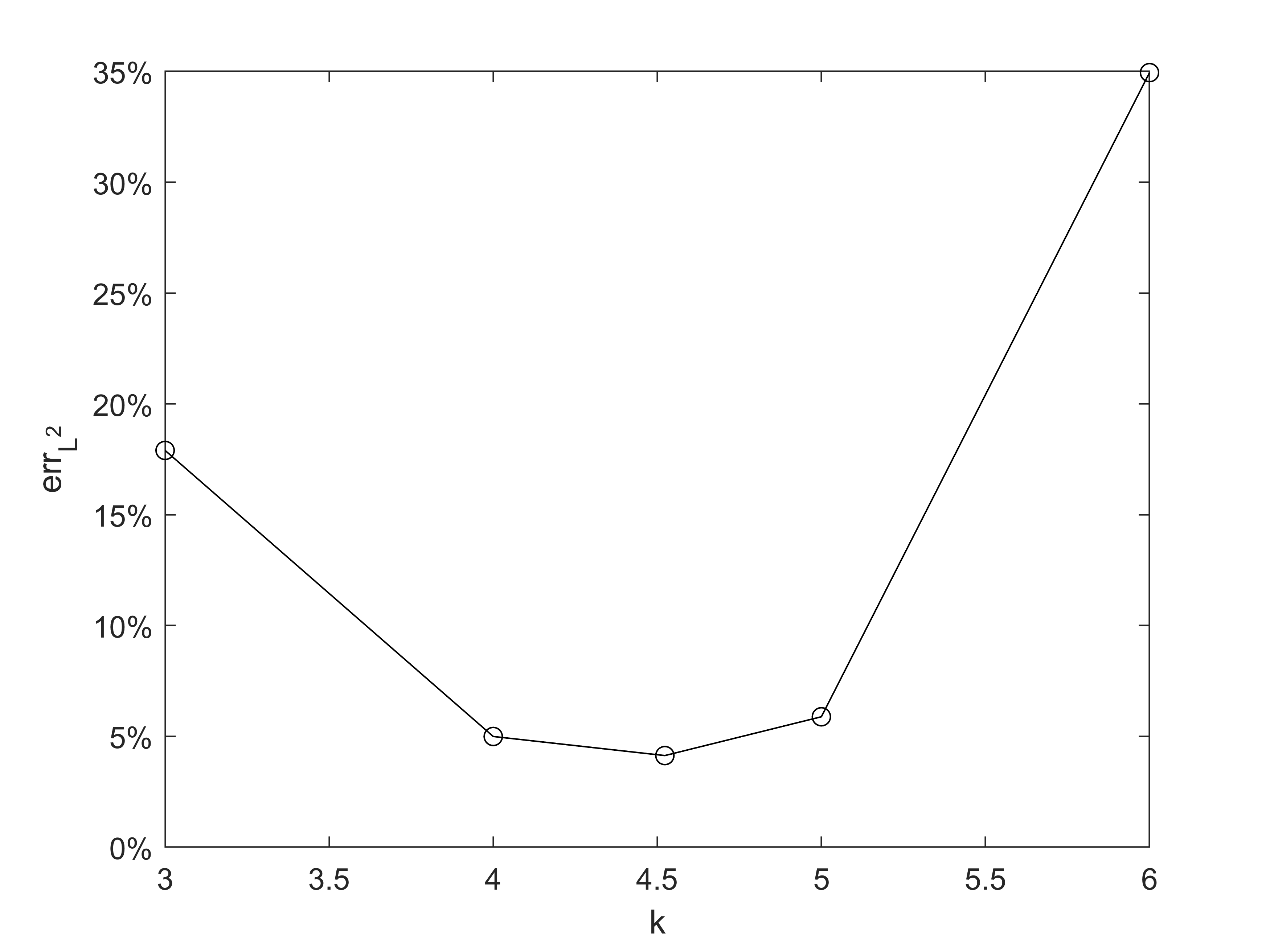}
		\caption{$(10^{-k},err_{L^2})$, $\alpha=1.2$}
		\label{figure_add1}
	\end{minipage}
	\begin{minipage}{0.49\linewidth}
		\centering
		\includegraphics[width=1\linewidth]{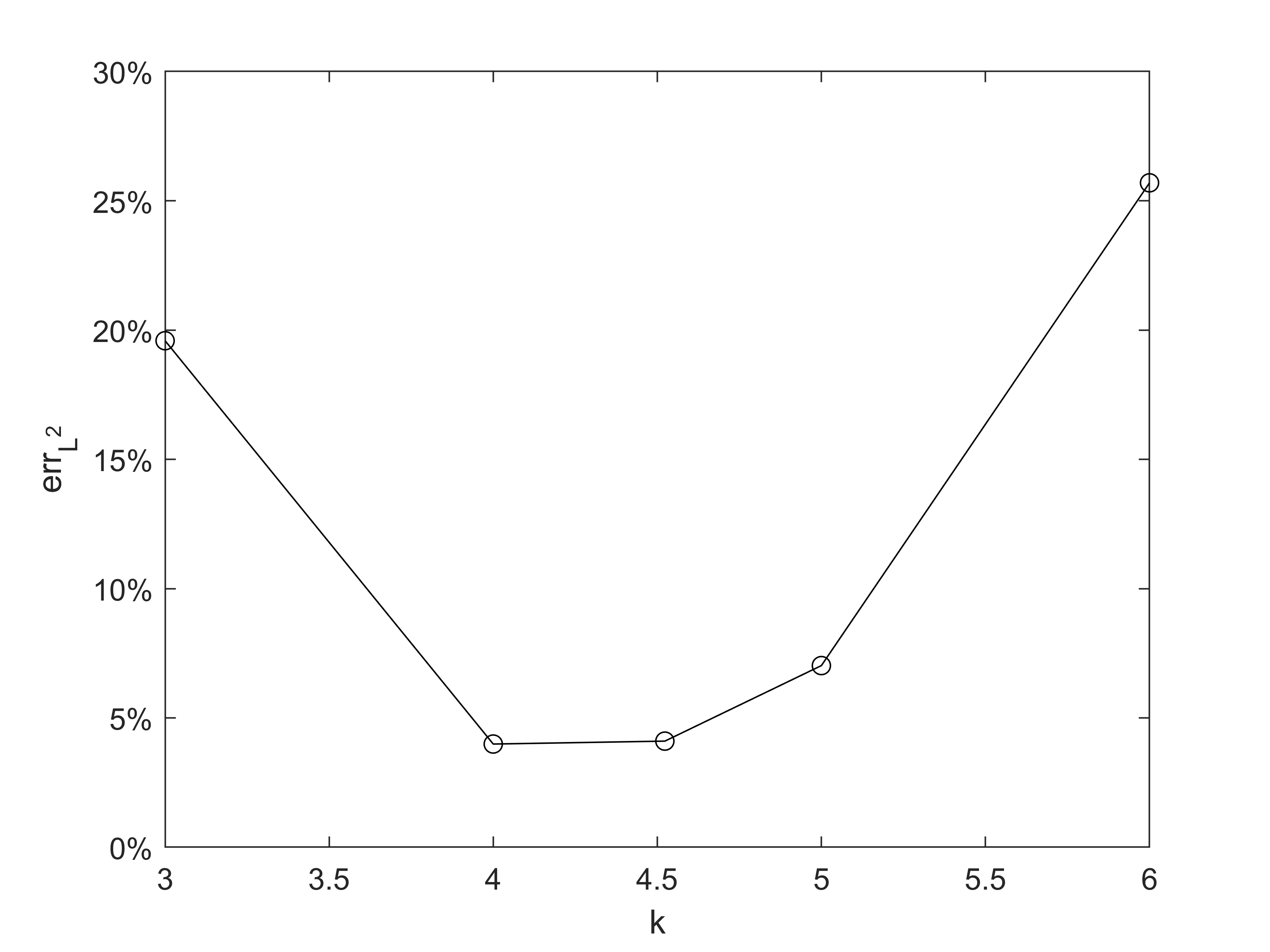}
		\caption{$(10^{-k},err_{L^2})$, $\alpha=1.8$}
		\label{figure_add2}
	\end{minipage}
\end{figure}

\begin{example}\label{ex-sigma=0.4}
We fix the noise parameter $\sigma=0.4$, the initial guess $a_1^0=0$, and the initial function is as follows:
\begin{align*}
a_1=7x^{0.75}(x-1)\sin(2\pi y),~T=1.
\end{align*}
\end{example}

Unlike Example \ref{ex-sigma=0.2}, the initial function $a_1$ has a weak singularity at $x=0$.
In Example \ref{ex-sigma=0.4}, the maximum norm of $Sa_1$ is $\|Sa_1\|_{L^\infty} \approx 0.43$, $0.61$ for $\alpha=1.2$, $1.8$. Thus, noise level $\frac{\sigma}{\|Sa_1\|_{L^\infty}}\approx93\%$, $66\%$ for $\alpha=1.2$, $1.8$.
Based on Remark \ref{remark-2}, the optimal regularization parameter $\rho = O\big((\sigma n^{-\frac12} \|a_1\|^{-1}_{L^2(\Omega)})^{4/3}\big)=O( 7.51\times 10^{-4})$ for $L^2$ regularization.
Then, we consider the scattered point measurements-based $H^1$ regularization approach below. In Remark \ref{remark-1}, it gives the optimal regularization parameter for $H^1$ regularization $\rho=O\left((\sigma n^{-\frac12} \|a_1\|^{-1}_{H^1(\Omega)})^{3/2}\right)=O( 1.60\times 10^{-5})$.

\begin{algorithm}
In this part, we design an iterative algorithm to find the optimal parameter $\rho$. From Remarks \ref{remark-1} and \ref{remark-2}, we know that the optimal parameter $\rho$ depends on $\sigma$ and $\|a_1^*\|_{X}$. In fact, both the noise parameter $\sigma$ and the norm of $a_1^*$ are unknown. A direct idea is to find $\rho$ by using a fixed-point iteration method. At the beginning, an initial guess is given $\rho_1=n^{-\frac{4(1+\beta)}{4(1+\beta)+d}}$ and then we obtain $a_1^1$. When $a_1^k$ the $k$-th step is available, we update $\rho$ as,
$$\rho_k^{1/2+d/8(1+\beta)}=n^{-1/2}\|a_1^k\|_{X}^{-1}\|S_{\tau,h}a_1^k-m\|_n,$$
where $S_{\tau,h}$ means the discrete approximation of forward operator $S$.

\begin{table}[!ht]
\caption{Optimal parameter iterative algorithm.}
\renewcommand{\arraystretch}{1}
\def\temptablewidth{1.0\textwidth}
\begin{center}
 \begin{tabular*}{\temptablewidth}{@{\extracolsep{\fill}}l}\hline
 \textbf{An algorithm of finding optimal parameter and recovering initial function.}\\ \hline
 1. \textbf{Input:} Observation data $m$, number of observation data $n$, error threshold of $\rho$ $tol_{\rho}$.  \\
 2. Setup initial guess $\rho_1\leftarrow n^{-\frac{4(1+\beta)}{4(1+\beta)+d}}$;   \\
 3. Solve $a_1^1$ from $m$ with parameter $\rho=\rho_1$; \\
 4. $\rho_2\leftarrow (n^{-1/2}\|S_{\tau,h}a_1^1-m\|_n\|a_1^1\|_{X}^{-1})^{\frac{8(1+\beta)}{4(1+\beta)+d}}$; \\
 5. $k \leftarrow 2$; \\
 6. \textbf{while} $|\rho_k-\rho_{k-1}|>tol_{\rho}$ \textbf{do}\\
 ~~~ 6.1 Solve $a_1^{k}$ from $m$ with parameter $\rho=\rho_k$;\\
 ~~~ 6.2 $\rho_{k+1}\leftarrow (n^{-1/2}\|S_{\tau,h}a_1^k-m\|_n\|a_1^k\|_{X}^{-1})^{\frac{8(1+\beta)}{4(1+\beta)+d}}$; \\
 ~~~ 6.3 $k \leftarrow k+1$;\\
 ~~~ \textbf{end}\\
 7. \textbf{Output:} parameter $\rho_k$ and solved initial function $a_1^{k}$.
 \\ \hline
\end{tabular*}
\end{center}
\end{table}

\end{algorithm}

We consider using the iterative algorithm of optimal parameters to recover the initial function $a_1$ in Example \ref{ex-sigma=0.4}. The algorithm enjoys the monotone property like the result in
\cite{Jin2024arXiv}.
The iteration of $\rho$ will stop when $|\rho_{k+1}-\rho_k|<tol_{\rho}=10^{-6},10^{-8}$ is used for the regularization of $L^2$ and $H^1$, respectively.
As shown in Figure \ref{figure_add3}, $\|S_{\tau,h}a_1^k-m\|_n$ is very close to the noise level parameter $\sigma=0.4$ after only 4 iterations. In the fourth iteration, the regularization parameters found for the regularization of $L^2$ are $6.8804\times10^{-4}$, $7.0666\times10^{-4}$ for $\alpha=1.2,1.8$, respectively. Similarly, for $H^1$ regularization, it shows that the regularization parameters found are $8.4303\times10^{-6}$, $1.1138\times10^{-5}$ for $\alpha=1.2,1.8$, respectively, see Figure \ref{figure_add5}.
The parameter $\rho$ found by the algorithm is very close to the optimal one.
This further demonstrates the practicality of the algorithm. It does not rely on any unknown information in actual situations, such as noise levels, initial value functions, etc. The reconstruction results at the end of the iterations are given in Figures \ref{figure9}-\ref{figure12}. And $err_{L^2}$ and $err_{H^{-1}}$ of them are $13.185\%$ and $8.899\%$, respectively.

\begin{figure}[!htb]
        \centering
	\begin{minipage}{0.49\linewidth}
		\centering
		\setlength{\abovecaptionskip}{0.28cm}
		\includegraphics[width=1\linewidth]{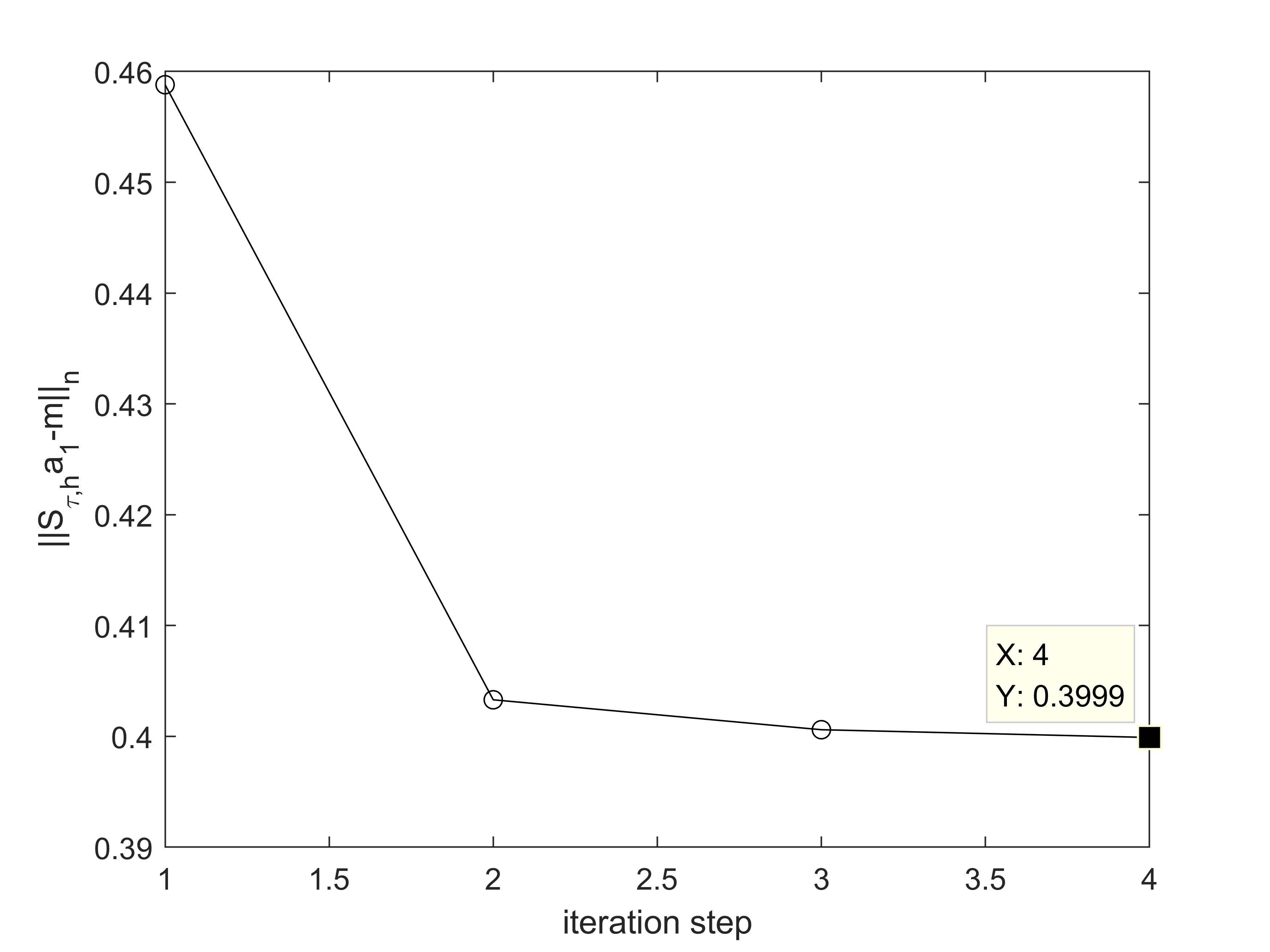}
	\end{minipage}
	\begin{minipage}{0.49\linewidth}
		\centering
		\includegraphics[width=1\linewidth]{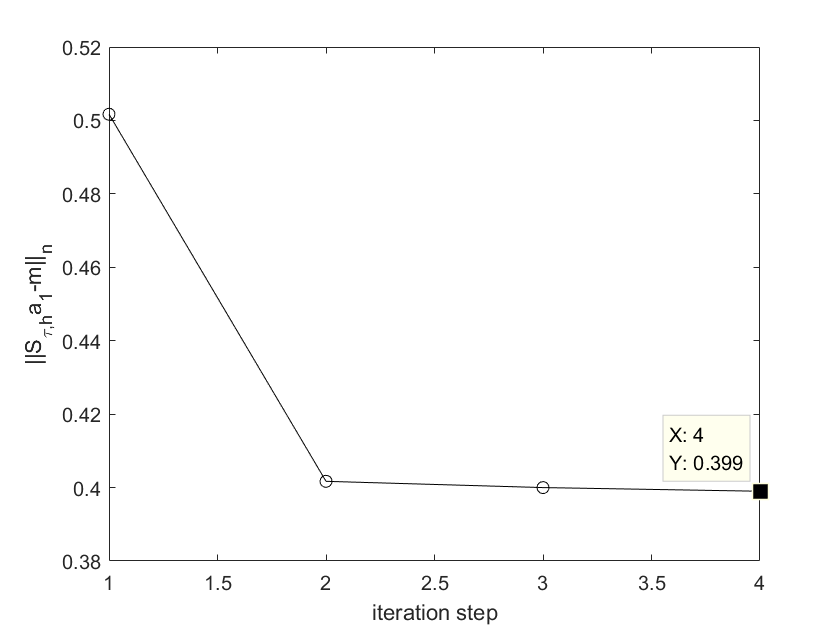}
	\end{minipage}
        \caption{The relative error $\|S_{\tau,h}a_1-m\|_n$ at each iteration in $L^2$ REG., $\alpha=1.2$(left) and $\alpha=1.8$(right).}
        \label{figure_add3}
\end{figure}

\begin{figure}[!htb]
        \begin{minipage}{0.49\linewidth}
		\centering
		\setlength{\abovecaptionskip}{0.28cm}
		\includegraphics[width=1\linewidth]{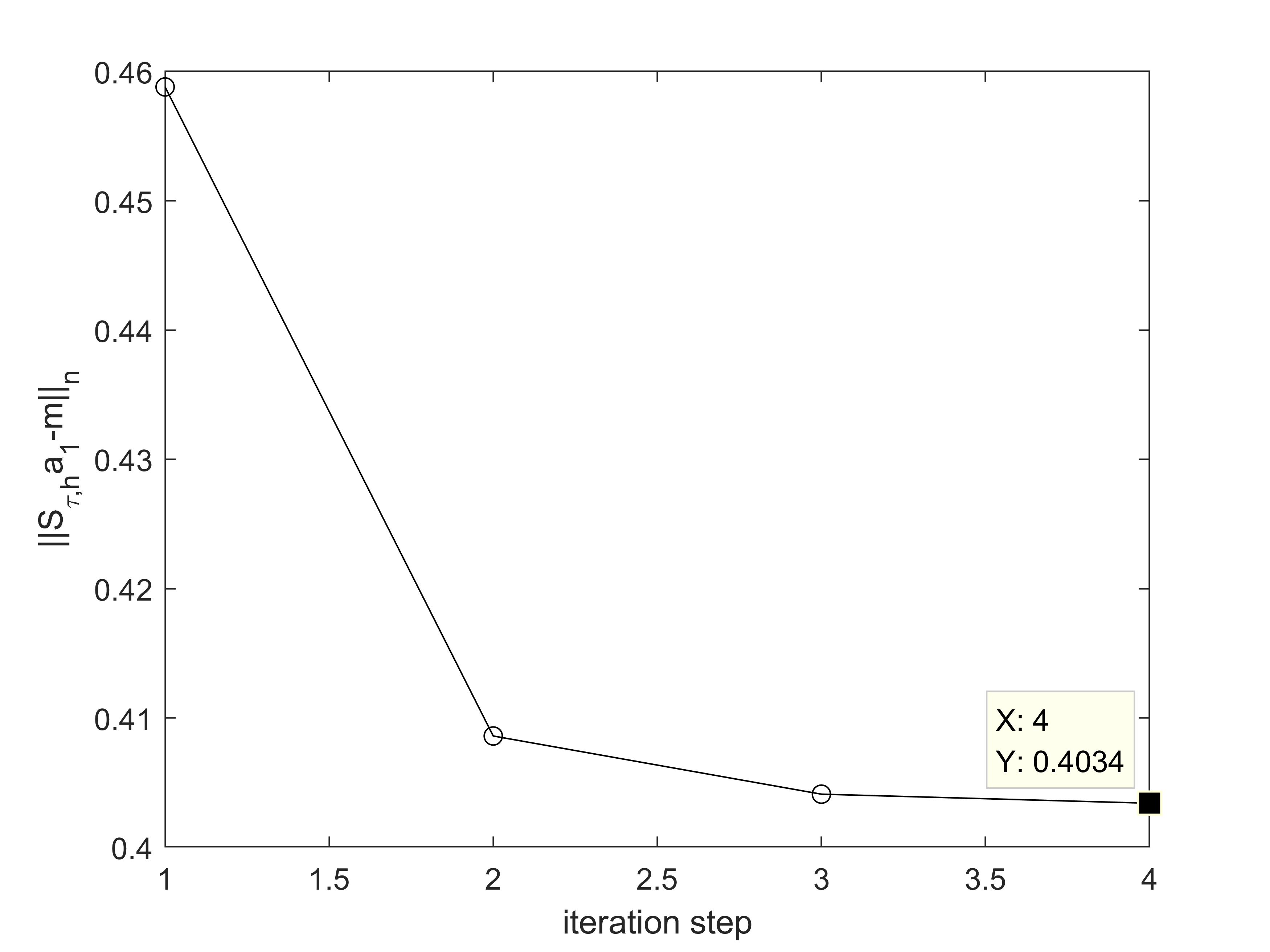}
		
	\end{minipage}
	\begin{minipage}{0.49\linewidth}
		\centering
		\includegraphics[width=1\linewidth]{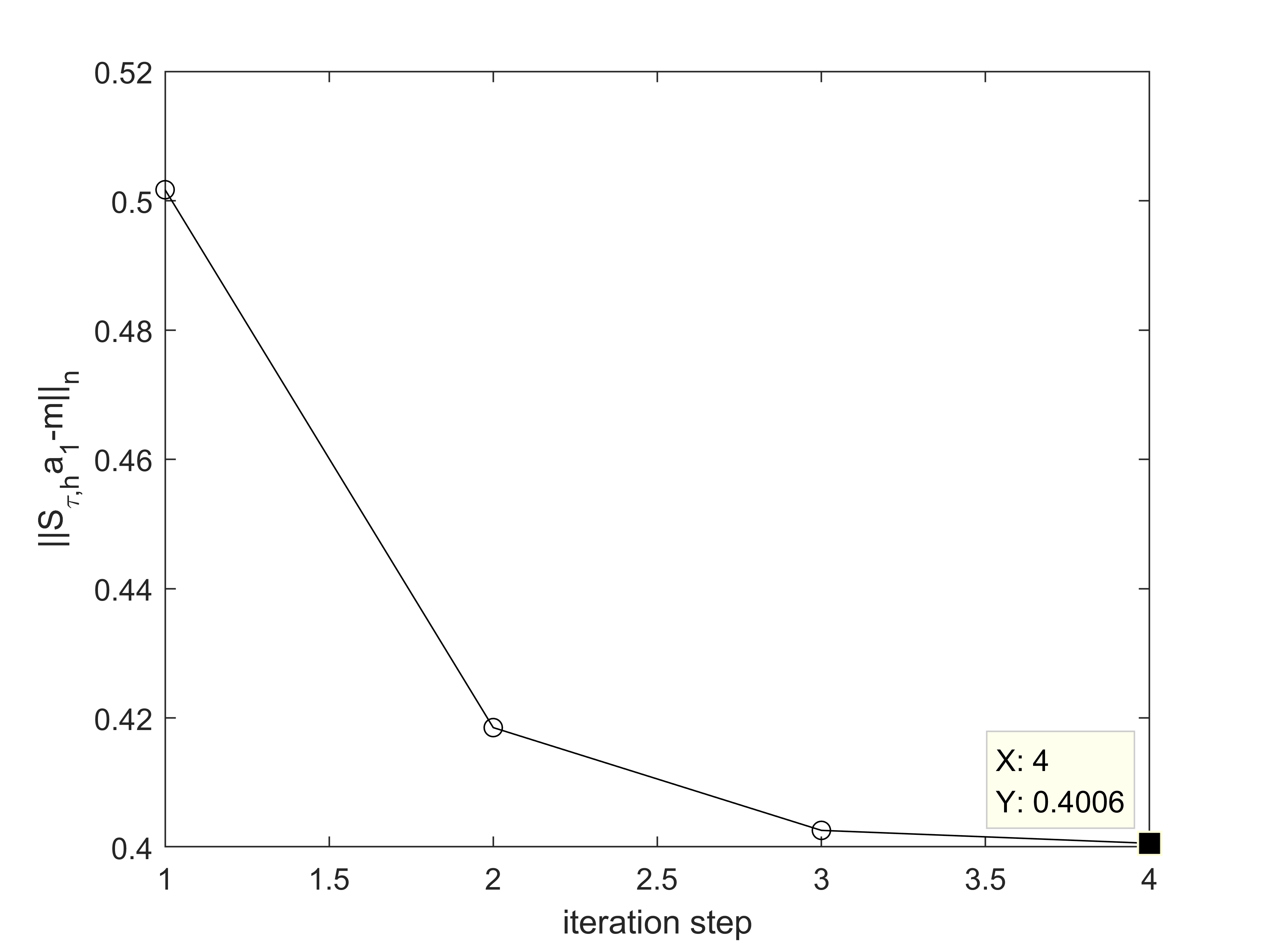}
	\end{minipage}
        \caption{The relative error $\|S_{\tau,h}a_1-m\|_n$ at each iteration in $H^1$ REG., $\alpha=1.2$(left) and $\alpha=1.8$(right).}
        \label{figure_add5}
\end{figure}

\begin{figure}[!htb]
        \begin{minipage}{0.49\linewidth}
		\centering
		\setlength{\abovecaptionskip}{0.28cm}%
		\includegraphics[width=1\linewidth]{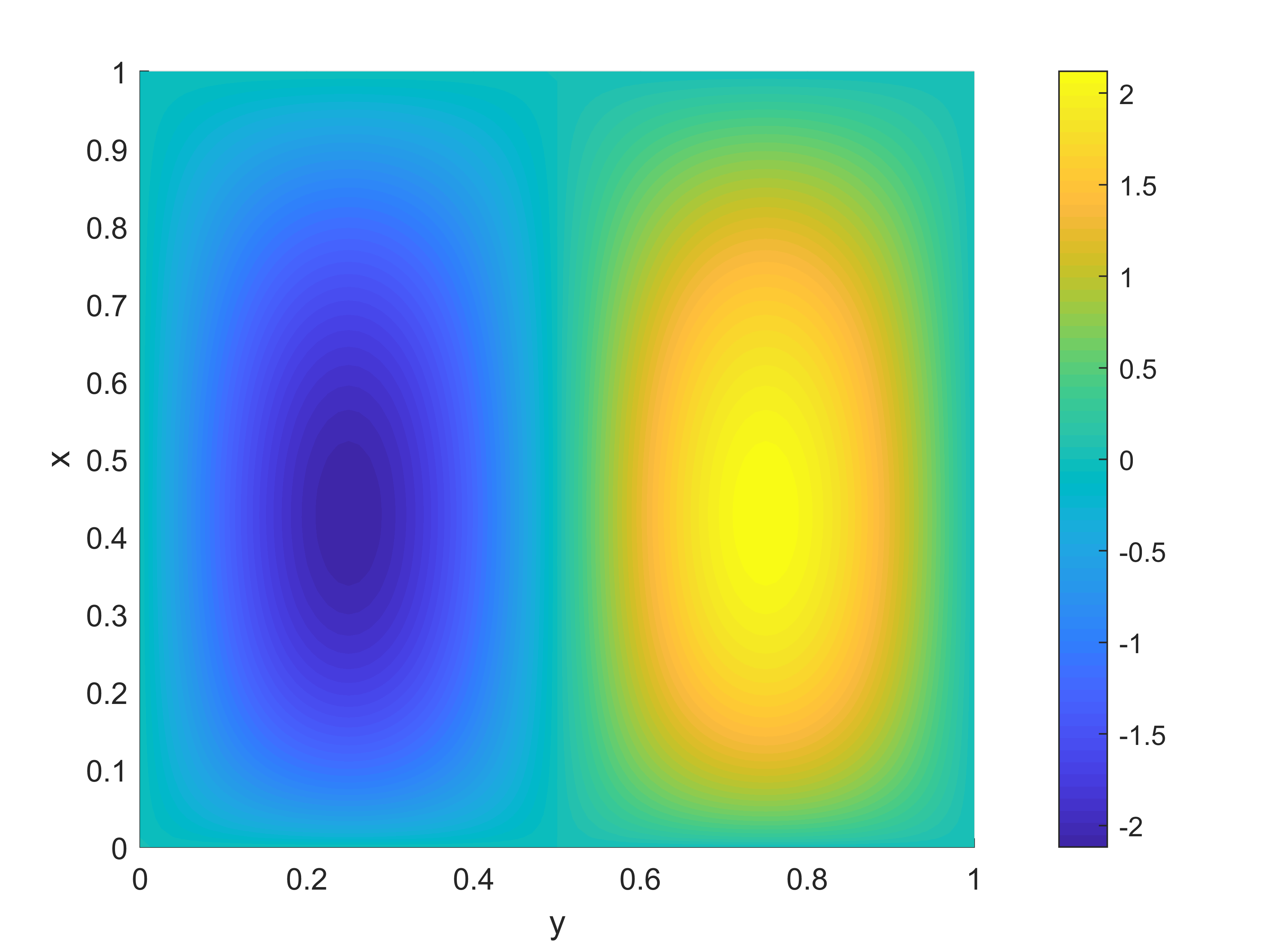}
		\caption{True $a_1$ in Ex. \ref{ex-sigma=0.4}.}
	\end{minipage}
	\begin{minipage}{0.49\linewidth}
		\centering
		\includegraphics[width=1\linewidth]{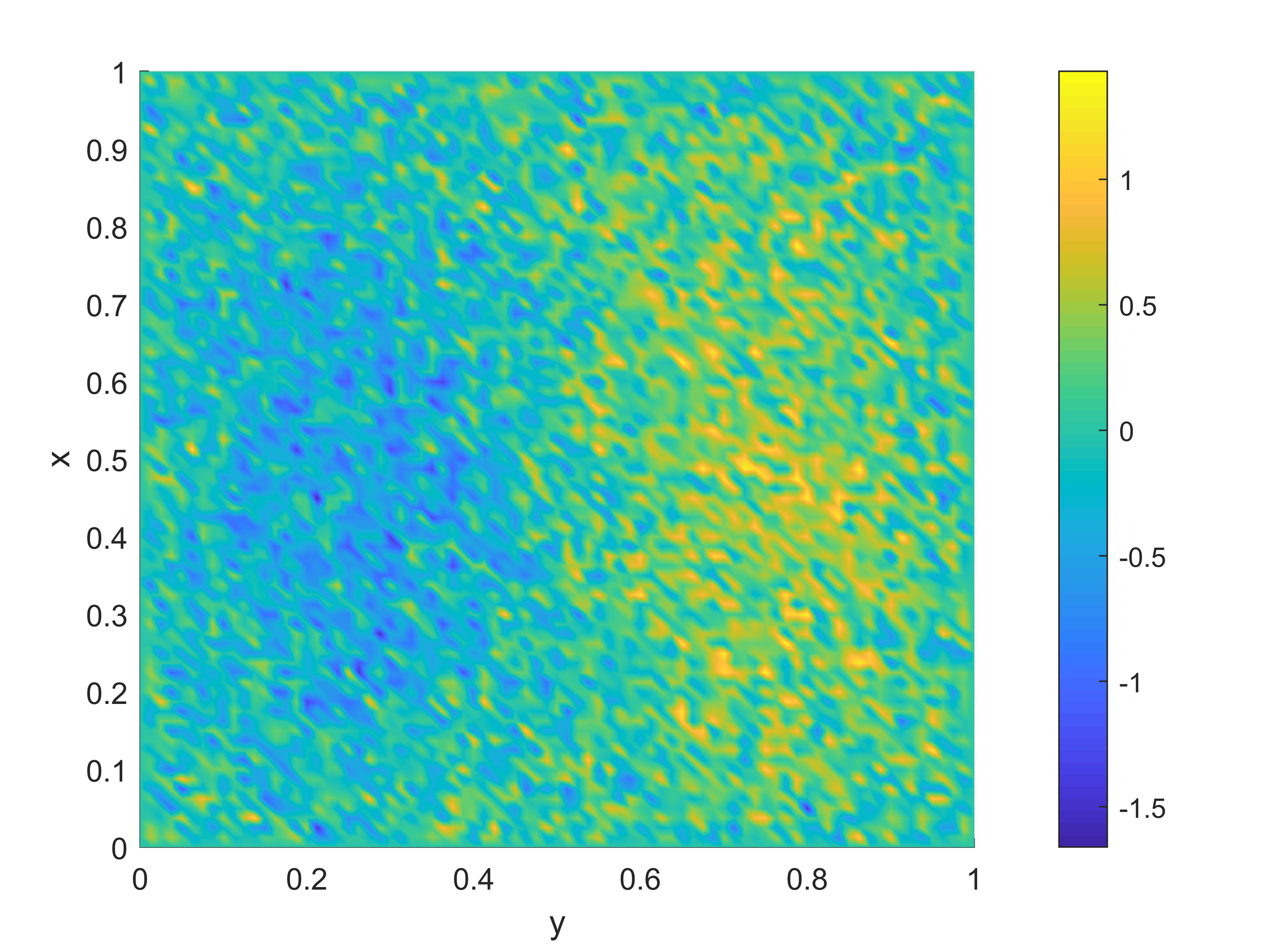}
		\caption{measurements, $\alpha=1.2$}
	\end{minipage}
\end{figure}

\begin{figure}[!htb]
        \begin{minipage}{0.49\linewidth}
		\centering
		\setlength{\abovecaptionskip}{0.28cm}
		\includegraphics[width=1\linewidth]{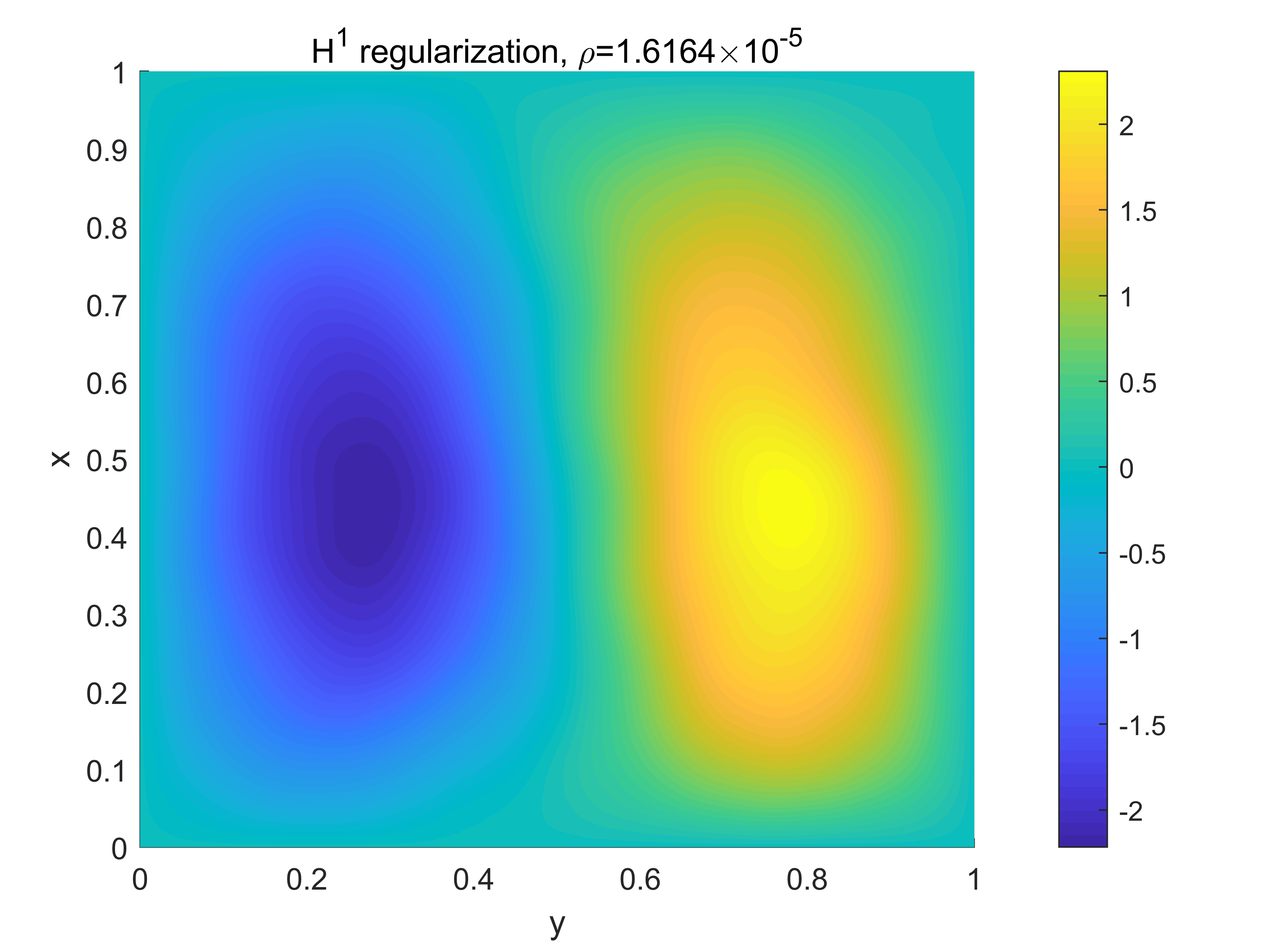}
		\caption{$a_1^{rec}$, $\alpha=1.2$ ($H^1$ REG.)}
		\label{figure9}
	  \end{minipage}
        \begin{minipage}{0.49\linewidth}
		\centering
		\setlength{\abovecaptionskip}{0.28cm}
		\includegraphics[width=1\linewidth]{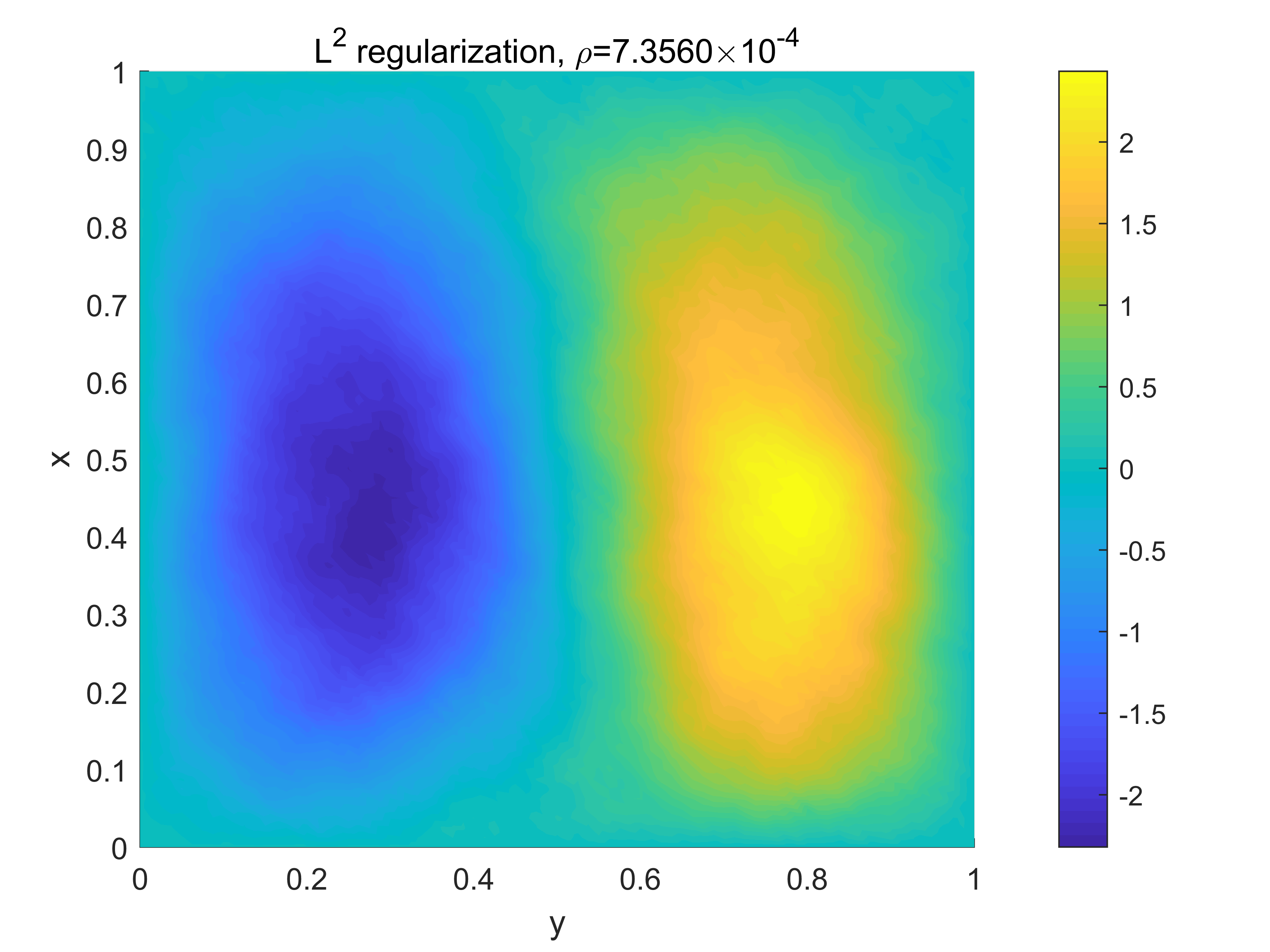}
		\caption{$a_1^{rec}$, $\alpha=1.2$ ($L^2$ REG.)}
		\label{figure12}
	  \end{minipage}
\end{figure}

\begin{example}\label{ex-sigma=0.01}
We try to recover a piecewise constant initial condition by using the iterative algorithm. Let the noise parameter $\sigma=0.01$, the initial guess $a_1^0=0$, and the initial function is as follows:
\begin{align*}
a_1=\chi_{[0.25,0.75]\times[0.25,0.75]}(x,y),~~T=0.1.
\end{align*}
\end{example}
The optimal regularization parameter $\rho = O\big((\sigma n^{-\frac12} \|a_1\|^{-1}_{L^2(\Omega)})^{4/3}\big)=O( 1.57\times 10^{-5})$ for $L^2$ regularization based on Remark \ref{remark-2}.  The iteration of $\rho$ will stop when $|\rho_{k+1}-\rho_k|<tol_{\rho}=10^{-7}$. The relative $H^{-1}$-norm error $err_{H^{-1}}$ between the true initial function $a_1$ and the reconstructed one $a_1^{rec}$ are $3.661\%$ and $1.918\%$ for $\alpha=1.2,1.8$, respectively. The reconstruction results are given in Figure \ref{ex3_figure2} and Figure \ref{ex3_figure4}. On the other hand, our algorithm is efficient. Figure \ref{ex3_figure5} shows that it takes only a few iterations to find the regularization parameters $\rho$.

\begin{figure}[!htb]
        \begin{minipage}{0.49\linewidth}
		\centering
		\setlength{\abovecaptionskip}{0.28cm}
		\includegraphics[width=1\linewidth]{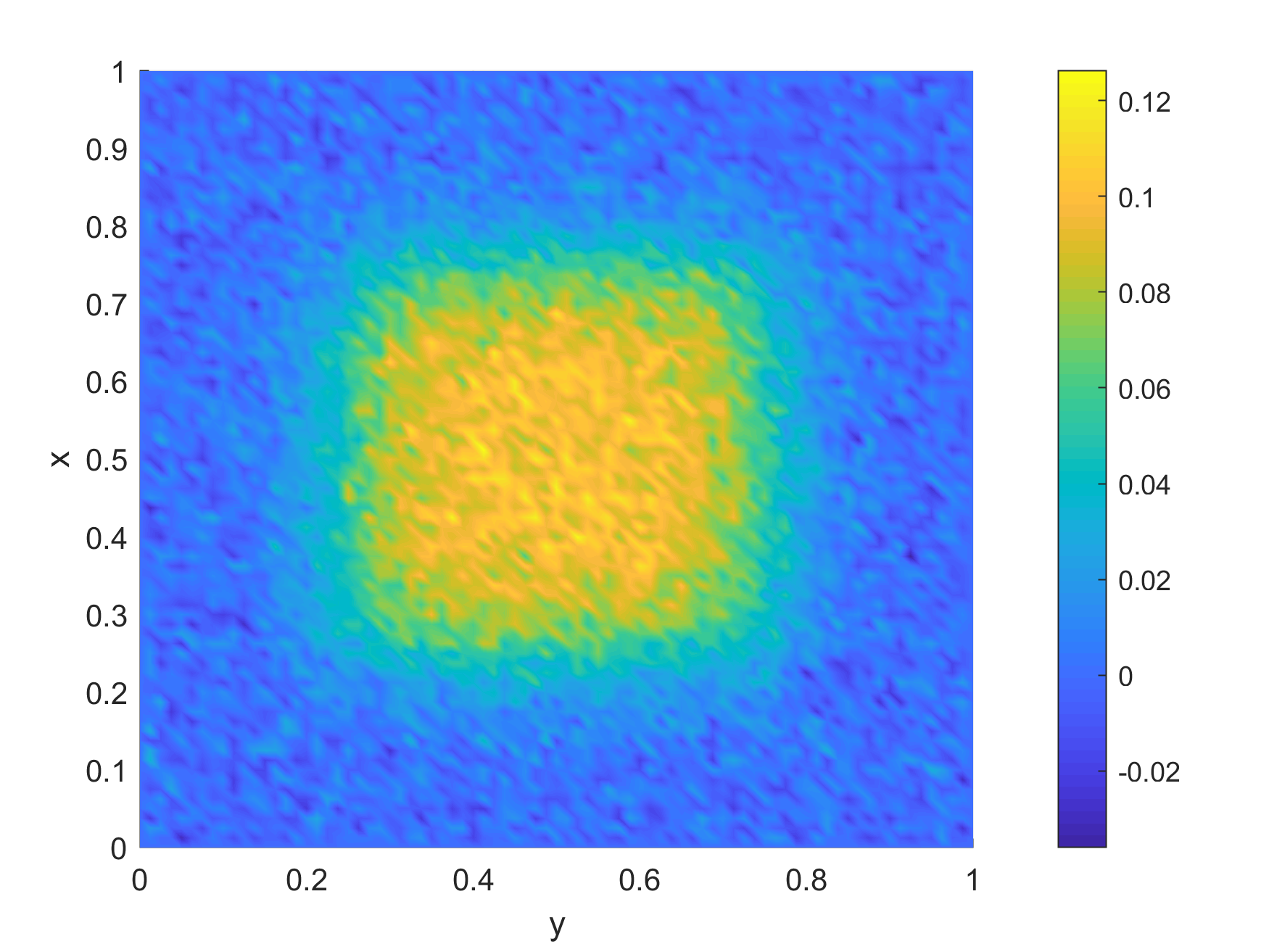}
		\caption{measurements, $\alpha=1.2$}
		\label{ex3_figure1}
	  \end{minipage}
        \begin{minipage}{0.49\linewidth}
		\centering
		\setlength{\abovecaptionskip}{0.28cm}
		\includegraphics[width=1\linewidth]{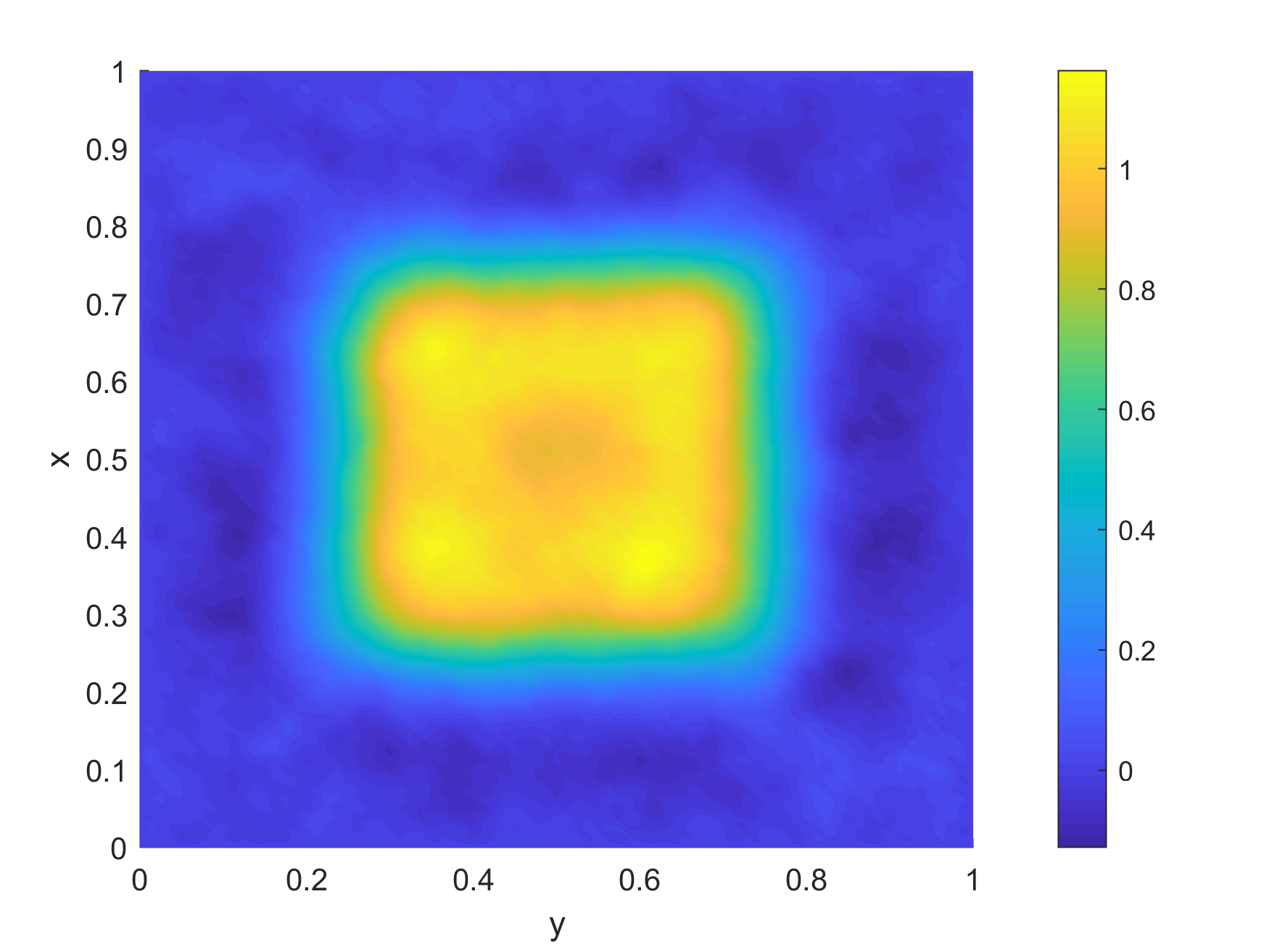}
		\caption{$a_1^{rec}$, $(\alpha,\rho)=(1.2,1.69\times10^{-5})$}
		\label{ex3_figure2}
	  \end{minipage}
\end{figure}

\begin{figure}[!htb]
        \begin{minipage}{0.49\linewidth}
		\centering
		\setlength{\abovecaptionskip}{0.28cm}
		\includegraphics[width=1\linewidth]{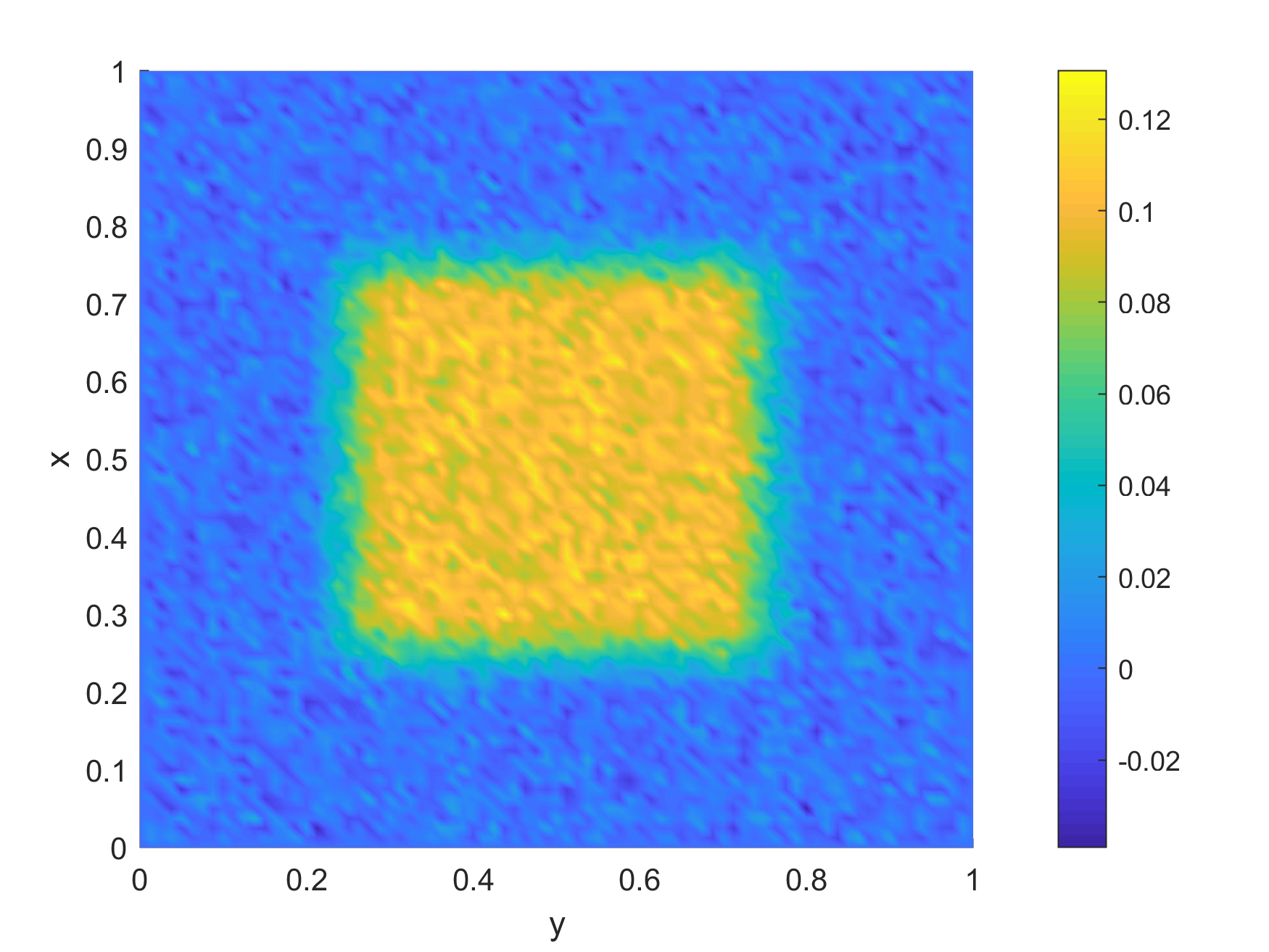}
		\caption{measurements, $\alpha=1.8$}
		\label{ex3_figure3}
	  \end{minipage}
        \begin{minipage}{0.49\linewidth}
		\centering
		\setlength{\abovecaptionskip}{0.28cm}
		\includegraphics[width=1\linewidth]{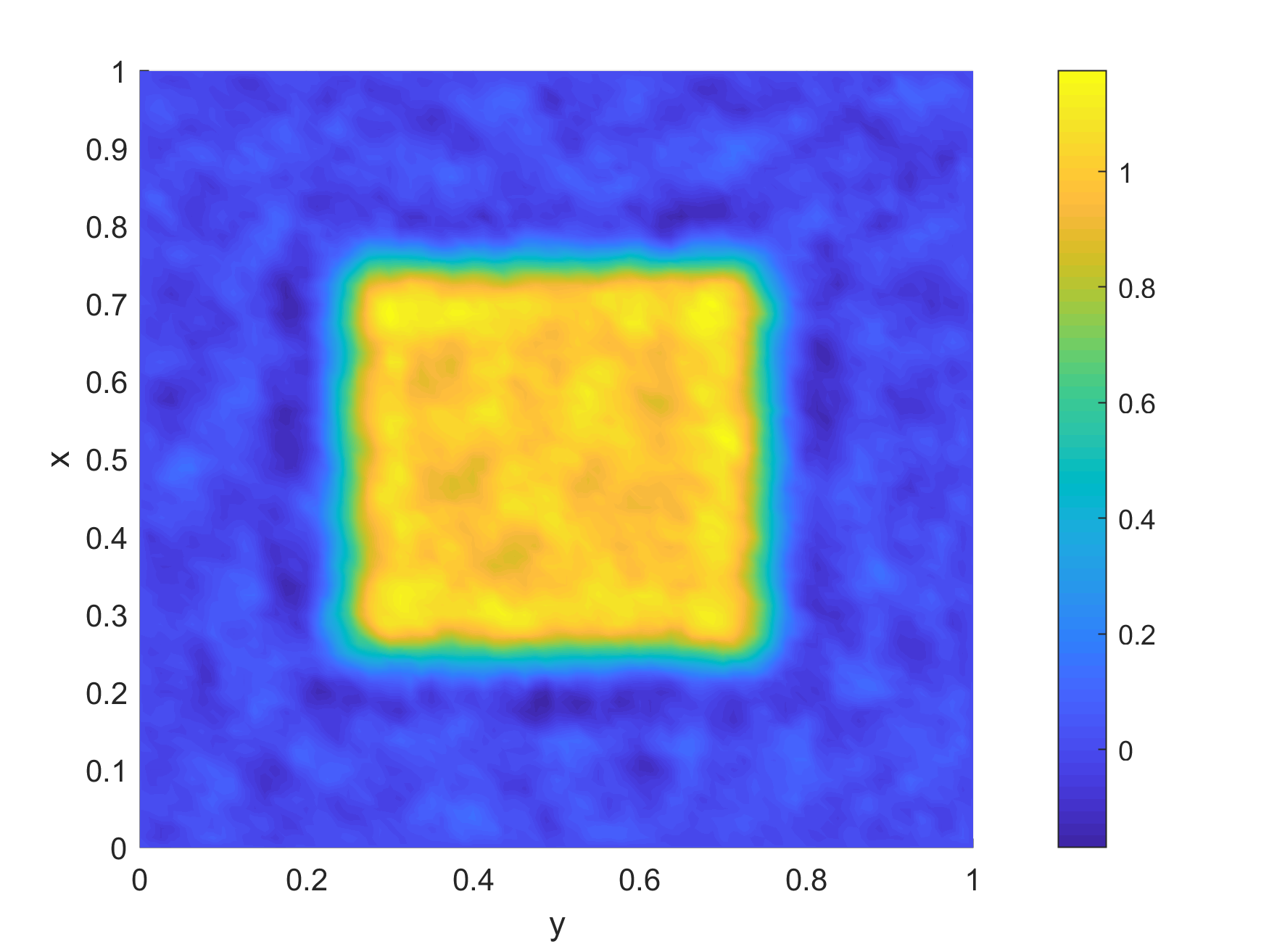}
		\caption{$a_1^{rec}$, $(\alpha,\rho)=(1.8,1.58\times10^{-5})$}
		\label{ex3_figure4}
	  \end{minipage}
\end{figure}

\begin{figure}[!htb]
        \begin{minipage}{0.49\linewidth}
		\centering
		\setlength{\abovecaptionskip}{0.28cm}
		\includegraphics[width=1\linewidth]{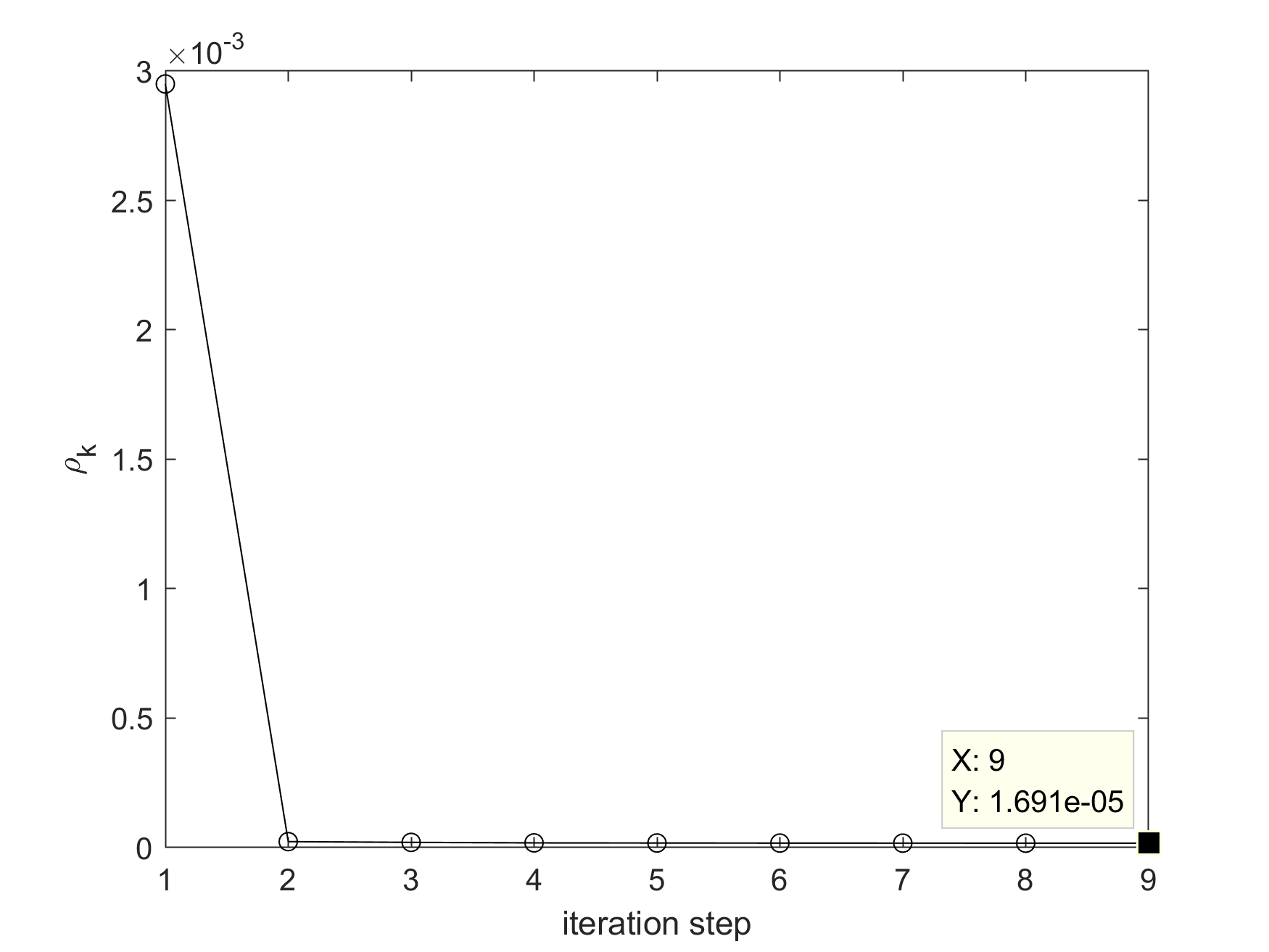}
	  \end{minipage}
        \begin{minipage}{0.49\linewidth}
		\centering
		\setlength{\abovecaptionskip}{0.28cm}
		\includegraphics[width=1\linewidth]{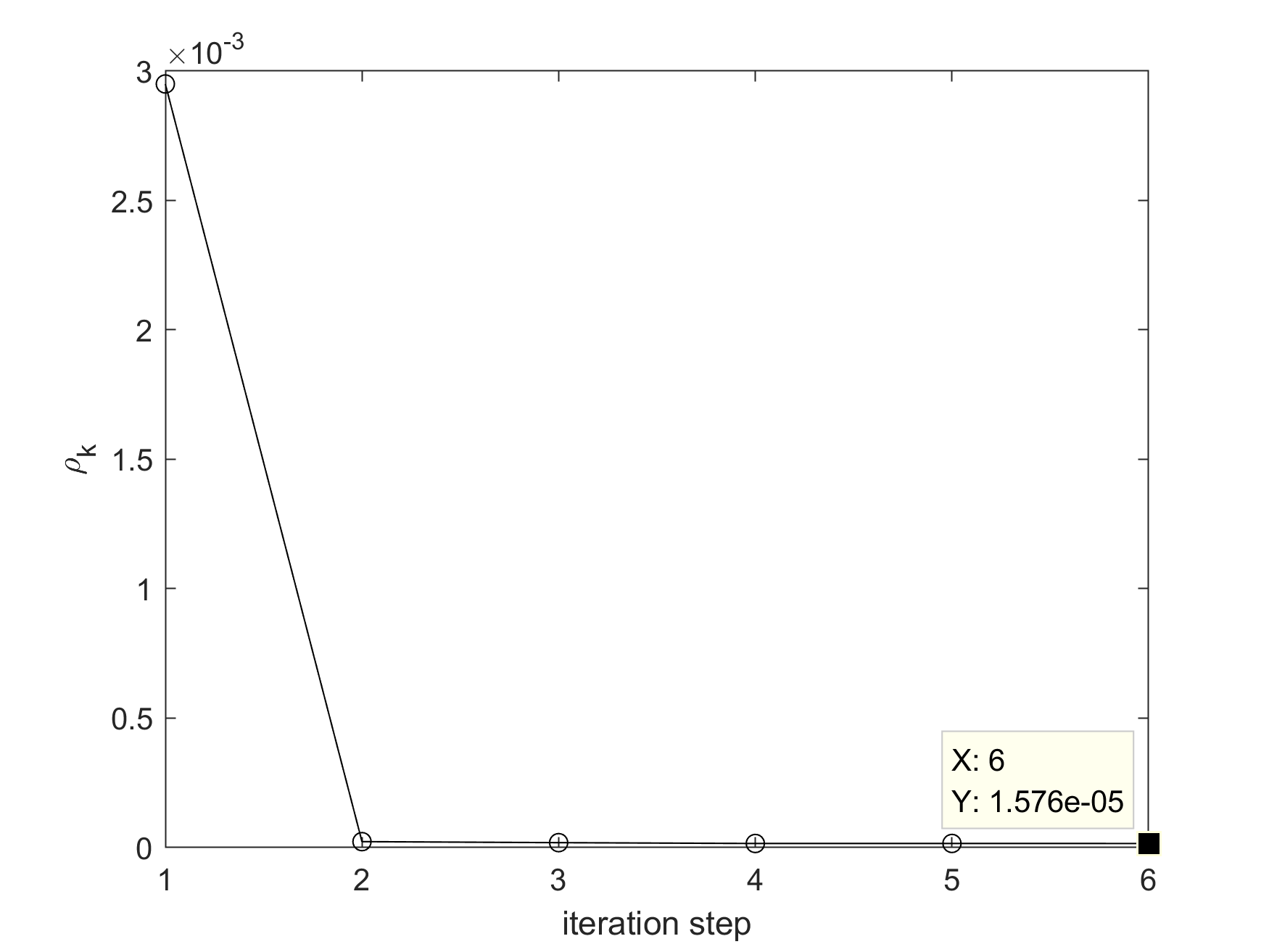}
	  \end{minipage}
        \caption{The regularization
parameter $\rho_k$ at each iteration, $\alpha=1.2$ (left) and $\alpha=1.8$ (right).}
		\label{ex3_figure5}
\end{figure}

\section{Concluding remarks}
This paper discussed the backward problem for fractional wave equations. Using the properties of the Mittag-Leffler functions, we established the stability results for the backward problem. In particular, additional constraints on the terminal time $T$ are not required when $\alpha$ lies in $(1,\frac43]$, which broadens the conditions under which the stability of solutions to backward-in-time problems can be ensured.

Moreover, addressing the inconsistency between theoretical treatment and numerical implementations in the backward problems for fractional wave equations, we introduced the theory of Tikhonov regularization based on scattered point measurements. We obtained stochastic convergence and provided the selection of regularization parameters as well as optimal error estimates. It should be noted that our numerical analysis and experiments indicate that despite the presence of large observation errors, we can still obtain more precise inversion results by increasing the number of observation points, which is difficult for classical regularization algorithms to achieve. 

Currently, we have only recovered one initial value. In the future, we plan to extend our method to recover two initial values. However, this may require terminal values at two time levels to achieve an accurate inversion of both initial conditions.

\section*{Declarations}
On behalf of all authors, the corresponding author states that there is no conflict of interest. No datasets were generated or analyzed during the current study.


\end{document}